% schema per articoli matematici e simili
%

\NeedsTeXFormat{LaTeX2e}% LaTeX 2.09 can't be used (nor non-LaTeX),ctagsplt
[1994/06/01]% LaTeX date must June 1994 or later

\documentclass[12pt,reqno,intlimits,a4paper,english]{amsart}

\usepackage{pstricks,graphicx,pst-3dplot}
\usepackage{calc}
\usepackage{babel}
\usepackage{amssymb}

\hoffset=-1truecm 
\textwidth=36pc

\numberwithin{equation}{section}
\pagestyle{plain}

%the version trick
\newcounter{hours}\newcounter{minutes}
\newcommand{\hourandminute}{\setcounter{hours}{\time/60}%
\setcounter{minutes}{\time-\value{hours}*60}%
\thehours.\theminutes}
\newcommand{\Versione}{\jobname\ \today\ \hourandminute}% premettere % per non
                                                        %stampare
% the indentation I like
\newlength{\Indent}
\setlength{\Indent}{\parindent}
\newlength{\Parskip}
\setlength{\Parskip}{\parskip}

\parindent=0pt
\parskip=0pt
\normalparindent=0pt

%theorems etc
\theoremstyle{plain}
\newtheorem{thm}{Theorem}[section]     %teoremi numerati nella sezione
\newtheorem{lemma}[thm]{Lemma}%[section]
\newtheorem{prop}[thm]{Proposition}%[section]
%[section]

\theoremstyle{remark}

\newtheorem{Remark}[thm]{Remark}
\newenvironment{remark}{\begin{Remark}}{\qed\end{Remark}}

\theoremstyle{definition}
\newtheorem{Defin}[thm]{Definition}
\newenvironment{defin}{\begin{Defin}}{\qed\end{Defin}}

% standard macro
\DeclareMathOperator{\Div}{div}

\newcommand{\win}{{\textsc{w}_o}}
\newcommand{\Iion}{I_{\textrm{ion}}}
\newcommand{\aaa}{a}
\newcommand{\bbb}{b}

\newcommand{\RN}{\Bbb{R}^{N}}
\newcommand{\R}{\Bbb{R}}

\newcommand{\di}{\,\text{\rmfamily\upshape d}}

\newcommand{\Om}{\varOmega}
\newcommand{\eps}{\varepsilon}

\newcommand{\vi}{\text{\rmfamily\upshape v}}

\newcommand{\CC}{\mathcal{C}}
\newcommand{\ZZ}{\Bbb{Z}}

\newcommand{\Z}{\ZZ}
\def\X{{\mathcal X}}

\newcommand{\dfint}{\sigma_{1}^B}
\newcommand{\dfinte}{\sigma^{B,\eps}_1}
\newcommand{\dfout}{\sigma_2^B}
\newcommand{\dfoute}{\sigma_{2}^{B,\eps}}
\newcommand{\dfdise}{\sigma^{D,\eps}}
\newcommand{\dfdis}{\sigma^{D}}

\newcommand{\capuno}{\alpha}
\newcommand{\capdue}{\beta}
\newcommand{\ubuno}{u^{B,\eps}_1}
\newcommand{\ubdue}{u^{B,\eps}_2}
\newcommand{\ud}{u^{D,\eps}}

\newcommand{\dfboth}{\sigma}
\newcommand{\dfbothe}{\sigma^\eps}

\newcommand{\Per}{E}
\newcommand{\Omint}{\Om^{D,\eps}}
\newcommand{\Omout}{\Om^{B,\eps}}
\newcommand{\Memb}{\varGamma^{\eps}}
\newcommand{\Perint}{\Per^{\textup{D}}}
\newcommand{\Perout}{\Per^{\textup{B}}}
\newcommand{\Permemb}{\varGamma}

\newcommand{\wto}{\rightharpoonup}
\newcommand{\cell}{Y}
\newcommand{\celloc}[1]{\cell_{\eps}#1}
\newcommand{\genfun}{w}
\newcommand{\hatfun}{\widehat{\genfun}}

\newcommand{\genfune}{\genfun_{\eps}}

\newcommand{\intepart}[1]{\left[#1\right]}
\newcommand{\macropart}[2]{\left[\frac{#1}{#2}\right]_{\cell}}

\newcommand{\micropart}[2]{\left\{\frac{#1}{#2}\right\}_{\cell}}

\newcommand{\intOset}{\widehat{\Om}_{\eps}}
\newcommand{\intspacetime}{\Lambda^{\eps}_T}
\newcommand{\const}{\gamma}
\newcommand{\unfop}{\mathcal{T}_{\eps}}
\newcommand{\btsunfop}{\mathcal{T}^b_{\eps}}
\newcommand{\average}{\mathcal{M}^\eps}
\newcommand{\saverage}{\mathcal{M}_{\cell}}

\newcommand{\testb}{\varphi_B}
\newcommand{\testd}{\varphi_D}
\newcommand{\testduno}{\varphi_D^1}
\newcommand{\testddue}{\varphi_D^2}
\newcommand{\ve}{v^\eps}
\newcommand{\hve}{\hat v^\eps}
\newcommand{\hvi}{\hat {\vi}}
\newcommand{\ue}{u^\eps}
\newcommand{\hue}{\hat u^\eps}
\newcommand{\espo}{\ell}
\newcommand{\nue}{\nu_\eps}
\def\we{{\widetilde w^\eps}}
\newcommand{\Ahomuno}{A^*}
\newcommand{\Ahomdue}{A^{hom}}
\newcommand{\wAhomdue}{\widehat A^{hom}}
\newcommand{\Ahomtre}{\widetilde A}

\makeindex
\begin{document}

\title
{Homogenization of a modified bidomain model involving imperfect transmission}
\author{M. Amar$^\dag$ -- D. Andreucci$^\dag$ -- C. Timofte$^\S$\\
\hfill \\
$^\dag$Dipartimento di Scienze di Base e Applicate per l'Ingegneria\\
Sapienza - Universit\`a di Roma\\
Via A. Scarpa 16, 00161 Roma, Italy
\\ \\
$^\S$University of Bucharest\\
Faculty of Physics\\
P.O. Box MG-11, Bucharest, Romania
}

\begin{abstract}
We study, by means of the periodic unfolding technique, the homogenization of a modified bidomain model, which describes the propagation of the action potential in
the cardiac electrophysiology. Such a model, allowing the presence of pathological zones in the heart, involves various geometries
and non-standard transmission conditions on the interface between the healthy and the damaged part of the cardiac muscle.
\medskip

  \textsc{Keywords:} Homogenization, time-periodic unfolding, bidomain models, imperfect transmission.

  \textsc{AMS-MSC:} 35B27, 35Q92, 35K20
  \bigskip

  \textbf{Acknowledgments}: The first author is member of the \emph{Gruppo Nazionale per l'Analisi Matematica, la Probabilit\`{a} e le loro Applicazioni} (GNAMPA) of the \emph{Istituto Nazionale di Alta Matematica} (INdAM).
The second author is member of the \emph{Gruppo Nazionale per la Fisica Matematica} (GNFM) of the \emph{Istituto Nazionale di Alta Matematica} (INdAM). The last author wishes to thank \emph{Dipartimento di Scienze di Base e Applicate per l'Ingegneria} for the warm hospitality and \emph{Universit\`{a} ``La Sapienza" of Rome} for the financial support.
\end{abstract}

\maketitle

%\ifx\Versione\UnDeFiNeD\else
%\begin{center}
%\Versione
%\end{center}
%\fi

%\tableofcontents

\section{Introduction}\label{s:introduction}
In the last years, the mathematical modeling of the electrical activity of the heart was a topic of major interest in biomedical research. A better understanding  of the complex bioelectrical processes involved in the activity of the heart is a key issue in order to find new drugs and diagnostic techniques, being well-known that a huge part of the heart diseases is produced by some disorders of its electrical activity.

One of the most well-known mathematical models in cardiac electrophysiology is the so-called {\it bidomain model} (see, e.g.,
\cite{KS,NK} and the references therein; see, also, the references quoted in \cite[Introduction]{Collin:Imperiale:2018}). In this model, at a macroscopic scale, the electric activity of the heart is governed by a system of two degenerate reaction-diffusion partial differential equations for the averaged intra-cellular and, respectively, extra-cellular electric potentials, along with the transmembrane potential, coupled in a nonlinear manner to ordinary differential equations describing the dynamics of the ion channels. In such a model, the cardiac tissue is represented, at a macroscopic scale, despite its discrete cellular structure, as the superposition of two continuous media, called the {\it intra-cellular} and, respectively, the {\it extra-cellular domain}, coexisting at each point of the heart tissue and connected through a distributed continuous cellular membrane.

 Several ionic models are considered in the literature for describing the cellular membrane dynamics, starting with the famous Hodgkin-Huxley formalism and continuing with more and more complex models
 (see, for instance, \cite{Collin:Imperiale:2018,Davidovic:2016,KS}).
 The well-posedness of the bidomain model has been studied, for different nonlinear ionic models and by using different techniques, by several authors (see, for instance, \cite{BK,Bourgault:Coudiere:Pierre:2009,Nagumo:Arimoto:Yoshizawa:1962,Pennacchio:Savare:Franzone:2005,Veneroni:2009}).

The bidomain model can be obtained as the homogenized version of an electrical conduction problem posed at the scale of single cells
(see, among others, \cite{Amar:Andreucci:Bisegna:Gianni:2006a,Amar:Andreucci:Bisegna:Gianni:2013,Amar:Debonis:Riey:2019,
Bourgault:Coudiere:Pierre:2009,Collin:Imperiale:2018,Grandelius:2019,NK,Nagumo:Arimoto:Yoshizawa:1962,V1}).
However, the problem for the single cell can be, in turn, obtained as the upscaling of the ionic model
(see, in the context of calcium dynamics,
\cite{Goel:Sneyd:Friedman:2006,Graf:Peter:Sneyd:2014,Higgins:Goel:Puglisi:Bers:Cannell:Sneyd:2007,Timofte:2017}).

The bidomain model is widely recognized as being the standard model used in cardiac electrophysiology
for describing the propagation of the action potential in a perfectly healthy cardiac tissue, but it is no longer valid in pathological situations, in which the heart contains electrically passive zones of fibrotic tissue, collagen or fat, as observed for instance in scars, inflammations, ischemic or rheumatic heart diseases, etc. Thus, it is important to find a suitable mathematical model that accounts for the presence of pathological zones in the heart.
Such a model was proposed in \cite{CDP2,Davidovic:2016};
it takes into account the presence in the cardiac tissue of damaged zones, called {\it diffusive inclusions} and assumed to be passive electrical conductors.

In the above mentioned papers, at the mesoscopic scale,
the heart tissue is considered to be a periodic structure obtained by inserting in a healthy tissue a set of periodically distributed diffusive inclusions. From the mathematical point of view, we have a bidomain system coupled with a diffusion equation. More precisely,
the model consists of a degenerate reaction-diffusion
system of partial differential equations modeling the intra-cellular and, respectively, the extra-cellular electric potentials of the healthy cardiac tissue, coupled with an elliptic equation for the passive regions and with an ordinary differential equation describing the cellular membrane dynamics.
A similar model arises also in coupling the torso to the heart (see, e.g., \cite{Boulakia:2015,Boulakia-2007-1,Veneroni:2009}).
The above model is, indeed, a {\it mesoscopic one}, the diffusive inclusions being considered at an {\it intermediate scale} in between the cardiac cell scale and the heart tissue scale.

The mentioned modifications assume a perfect electrical coupling between the healthy part of the heart and the damaged tissue.
More general conditions for the heart-torso coupling were proposed in \cite{Boulakia-2007-1}
and investigated through numerical simulations in \cite{Boulakia:2015,Boulakia-2010}, in order to take into account the possible capacitive and resistive effects of the pericardium. We investigate these more general conditions in the context of the bidomain model with diffusive inclusions, where
the appropriate interface behaviour, up to our knowledge, is still not well understood.
%surface of the diffusive inclusions.
The well-posedness of such a problem was addressed for the first time, as far as we know,
in \cite{Amar:Andreucci:Timofte:2019A}.
Here, we rigorously investigate, in more general geometries than the ones considered in \cite{CDP2}, the homogenization of the problem with
non-standard interface conditions, this being, in fact, the main novelty of our paper.

To achieve our goal, we use the periodic homogenization unfolding technique. The limit problems highly depend on the scaling of the
imperfect transmission across the membrane and on the geometry of the domain.
The influence of the diffusive zone is captured in the limit in several different ways.
In particular, for some special scalings and geometries,
we obtain a bidomain system with memory effects (see Theorem \ref{t:t1} and Remark \ref{r:r9})
or a kind of tridomain model (see Theorem \ref{t:t1a1} and Remark \ref{r:r11}).

We point out again that
our model generalizes the modified bidomain one with diffusive inclusions and perfect transmission conditions considered in \cite{CDP2,Davidovic:2016},
the original model being recovered by suitably rearranging the
parameters appearing in equation \eqref{eq:Circuit}.
We, finally, remark that homogenization techniques can be applied to improve the design of biomedical devices used in 
heart problems (\cite{Gaudiello:Lenczner:2020} and the references therein).

\medskip
The paper is organized as follows: Section \ref{s:threeD_problem} is devoted to the geometrical and functional setting and to the
introduction of the microscopic problem. In Section \ref{s:time_unfolding}, we introduce the time-depending unfolding operator
and some of its properties.
In Section \ref{s:homog}, we state and prove our main homogenization results.

\section{The microscopic problem}\label{s:threeD_problem}

\subsection{Geometrical setting}\label{ss:geometric}
The typical periodic geometrical setting is displayed in Figure~\ref{fig:omega} and Figure~\ref{fig:3d}.
Here we give, for the sake of clarity, its detailed formal definition.
Let $N\geq 3$. Let $\Om$ be an open connected bounded subset of $\RN$
and introduce a periodic open subset $\Per$
of $\RN$, such that $\Per+z=\Per$ for all $z\in\ZZ^{N}$.
We assume that $\Om$ and $E$ are of class $\CC^\infty$,
though this assumption can be weakened.

We employ the notation $Y=(0,1)^{N}$ and
$\Perint=\Per\cap Y$, $\Perout=Y\setminus\overline{\Per}$,
$\Permemb=\partial\Per\cap \overline Y$.
We assume that $\Perout$ is connected,
while $\Perint$ may be connected or not. Moreover,
we stipulate that $|\Permemb\cap\partial Y|_{N-1}=0$.

Let $\eps \in (0,1)$ be a small positive parameter, related to the characteristic dimension of the
microstructure and which takes values in a sequence of strictly positive numbers tending to
zero.
We define $\Omint=\Om\cap\eps \Per$,
$\Omout=\Om\setminus\overline{\eps \Per}$, so that
$\Om=\Omint\cup\Omout\cup\Memb$, where $\Omint$ and $\Omout$ are
two disjoint open subsets of $\Om$ and
$\Memb=\partial\Omint\cap\Om=\partial\Omout\cap\Om$.
From the biological point of view, $\Om$ represents the region occupied by the cardiac tissue,
$\Omout$ [respectively, $\Omint$] corresponds to the bidomain
phase [respectively, the damaged part], while
$\Memb$ is the interface between these two regions;
in fact, these definitions are slightly modified below.
We assume also that
$\Omout$ is connected at each step $\eps>0$, while $\Omint$ will be connected or disconnected.
Indeed, we will consider two different cases: in the first one (to which we will
refer as the {\it connected/disconnected case}, see Figure \ref{fig:omega}),
we will assume that $\Permemb\cap \partial Y=\emptyset$. We also
stipulate that all the cells which intersect $\partial\Om$ do not contain any inclusion,
so that, for all $\eps>0$, $\partial\Om\cap\partial\Omint\not=\emptyset$ and, moreover, ${\rm dist}(\Memb,\partial\Om)\geq c\eps$,
where $c$ is a suitable strictly positive constant.

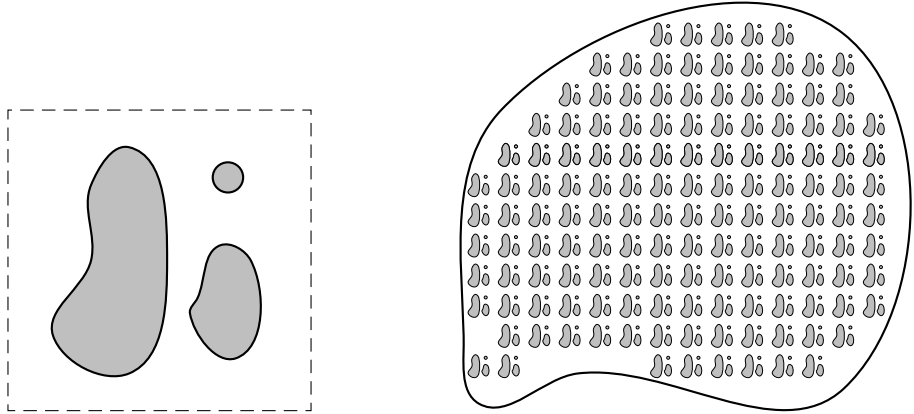
\begin{figure}[htbp]%
\begin{center}%
\begin{pspicture}(12,6)
% first example
\rput(0,1){
%\rput(-.5,2){\mbox{i)}}
\psframe[linewidth=0.4pt,linestyle=dashed](0,0)(4,4)
%\psellipse[fillstyle=solid,fillcolor=lightgray](1,2)(.5,1)
\rput[l](-0.4,0){\psccurve[fillstyle=solid,fillcolor=lightgray](2,.5)(2.5,2)(2,3.5)(1.5,3)(1.5,2)(1,1)}
\psccurve[fillstyle=solid,fillcolor=lightgray](3,.7)(3.2,2)(2.8,2.2)(2.5,1.5)(2.4,1.3)
\pscircle[fillstyle=solid,fillcolor=lightgray](2.9,3.1){.2}
}
%% whole domain
\newcommand{\minicell}{\scalebox{0.1}{
\rput[l](-0.4,0){\psccurve[fillstyle=solid,fillcolor=lightgray](2,.5)(2.5,2)(2,3.5)(1.5,3)(1.5,2)(1,1)}
\psccurve[fillstyle=solid,fillcolor=lightgray](3,.7)(3.2,2)(2.8,2.2)(2.5,1.5)(2.4,1.3)
\pscircle[fillstyle=solid,fillcolor=lightgray](2.9,3.1){.2}
}}
\rput(6,1){
% \rput(4,0){\minicell}%
\multirput(0,0.4)(0.4,0){2}{\minicell}%
\multirput(2.4,0.4)(0.4,0){6}{\minicell}%
\multirput(0.4,0.8)(0.4,0){12}{\minicell}%
\multirput(0,1.2)(0,0.4){5}{\multirput(0,0)(0.4,0){14}{\minicell}}%
\multirput(0.4,3.2)(0.4,0){13}{\minicell}%
\multirput(0.4,3.2)(0.4,0){13}{\minicell}%
\multirput(0.8,3.6)(0.4,0){12}{\minicell}%
\multirput(1.2,4)(0.4,0){10}{\minicell}%
\multirput(1.6,4.4)(0.4,0){9}{\minicell}%
\multirput(2.4,4.8)(0.4,0){5}{\minicell}%
% \multirput(0,4.4)(0,0.4){2}{\multirput(0,0)(0.4,0){14}{\minicell}}%
\psccurve(0.2,.1)(1.5,.5)(5,.3)(5,5)(.5,4)(0,1)
}

\end{pspicture}%
    \caption{\textsl{Left}: the periodic cell $Y$. $\Perint$ is the shaded
    region and $\Perout$ is the white region.
    \textsl{Right}: the region $\Om$.}
    \label{fig:omega}
  \end{center}
\end{figure}

In the second case (to which we will refer as the {\it connected/connected case},
see Figure \ref{fig:3d}) we will
assume that $\Perint$, $\Perout$, $\Omint$ and $\Omout$ are connected and, without loss of generality,
that they have Lipschitz continuous boundary.
In this last case, we have that, for all $\eps>0$, both $\partial\Om\cap\partial\Omout\not=\emptyset$ and
$\partial\Om\cap\partial\Omint\not=\emptyset$. Moreover, we suppose that our geometry satisfies all the assumptions
stated in \cite[Section 3.2.1]{Hopker:2016}. More precisely, in order to avoid technicalities, we choose, in this case,
$\eps$ in such a way that the set $\eps^{-1}\Om$ can be represented as a finite union of axis-parallel cuboids with corner coordinates in $\Z^N$.

\begin{figure}[htbp]%
\begin{center}%
\begin{pspicture*}(-6,0)(6,6)
\psset{Alpha=30,Beta=20,unit=0.8cm}
\rput(0,2.5){
% \psset{fillstyle=solid,fillcolor=white}
\pstThreeDCoor[xMin=0,xMax=5,yMin=0,yMax=5,zMin=0,zMax=5,,drawing=false,linecolor=black]
\pstThreeDLine(0,0,0)(0,0,5)
\pstThreeDLine(0,0,0)(0,5,0)
\pstThreeDLine(0,0,0)(5,0,0)
\pstIIIDCylinder[%fillstyle=solid,
linecolor=gray,%fillcolor=black%
increment=10%
](3,3,0){0.5}{5}
\pstThreeDPut(0,3,3.5){\pstIIIDCylinder[RotY=90,%fillstyle=solid,
linecolor=gray,%fillcolor=black%
increment=10%
](0,3,3){0.5}{5}}
\pstThreeDPut(2,0,5){\pstIIIDCylinder[RotX=-90,%fillstyle=solid,
linecolor=gray,%fillcolor=black%
,increment=10%
](3,0,3){0.5}{5}
}
\pstThreeDLine(5,5,5)(0,5,5)
\pstThreeDLine(5,5,5)(5,5,0)
\pstThreeDLine(5,5,5)(5,0,5)
\pstThreeDLine(0,0,5)(0,5,5)
\pstThreeDLine(0,0,5)(5,0,5)
\pstThreeDLine(0,5,0)(0,5,5)
\pstThreeDLine(5,0,0)(5,0,5)
\pstThreeDLine(5,5,0)(5,0,0)
\pstThreeDLine(5,5,0)(0,5,0)
}
\end{pspicture*}%
    \caption{The periodic cell $Y$. $\Perint$ is the shaded
    region and $\Perout$ is the white region.}
    \label{fig:3d}
  \end{center}
\end{figure}
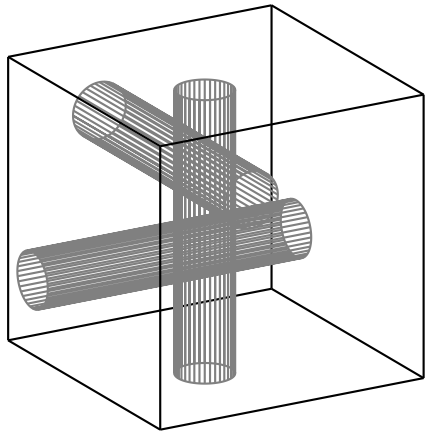

Finally, let $\nu$ denote the normal unit vector to $\Permemb$ pointing
into $\Perout$, extended by periodicity to the whole of $\R^N$, so that $\nue(x)=\nu(x/\eps)$
denotes the normal unit vector to $\Memb$ pointing into $\Omout$.

In the following, by $\const$ we shall denote a strictly positive constant, which may depend
on the geometry and on the other parameters of the problem; $\const$ may vary from line
to line. Moreover, if ${\mathcal G}\subset\R^N$ is
an open set and $T>0$, we set ${\mathcal G}_T={\mathcal G}\times(0,T)$.

\medskip

\subsection{Functional spaces}\label{ss:spaces}

Following \cite{Amar:Andreucci:Timofte:2019A}, we consider the functional spaces
\begin{equation}
\label{eq:space1}
\begin{aligned}
& H^1_{null}(\Omout):= \{w\in H^1(\Omout): \hbox{$w=0$ on $\partial\Omout\cap\partial\Om$, in the sense of traces}\};
\\
& H^1_{null}(\Omint):= \{w\in H^1(\Omint): \hbox{$w=0$ on $\partial\Omint\cap\partial\Om$, in the sense of traces}\}.
\end{aligned}
\end{equation}
Notice that
in the connected/disconnected case
$H^1_{null}(\Omint)=H^1(\Omint)$.

We also define the space
\begin{equation}\label{eq:a1}
\X^1_{0\eps}(\Om):=\{w:\Om\to \R\ :\ w_{\mid_{\Omout}}\in H^1_{null}(\Omout),\
w_{\mid_{\Omint}}\in H^1_{null}(\Omint)\}\,,
\end{equation}
endowed with the norm
\begin{equation}\label{eq:a3}
\Vert w\Vert^2_{\X^1_{0\eps}(\Om)}:= \Vert \nabla w\Vert^2_{L^2(\Omout)}+\Vert w\Vert^2_{H^1(\Omint)}.
\end{equation}
By our assumptions, we have that $\partial\Omout\cap\partial\Om$ is always non-empty, while $\partial\Omint$
can intersect or not the boundary of $\Om$, depending on the geometry.
We recall that, for $w\in \X^1_{0\eps}(\Om)$, the following Poincar\'{e} inequality holds
(see \cite[Proposition 7.1 and Remark 7.1]{Amar:Andreucci:Bisegna:Gianni:2004a}):
\begin{equation}\label{eq:poincare1}
\Vert w\Vert^2_{L^2(\Om)}\leq \const \left(\Vert \nabla w\Vert^2_{L^2(\Omout)}+\Vert \nabla w\Vert^2_{L^2(\Omint)}
+\eps\Vert[w]\Vert^2_{L^2(\Memb)}\right)\,,
\end{equation}
where $[w]=w_{\mid_{\Omout}}-w_{\mid_{\Omint}}$ and the constant $\const$ is independent  of $\eps$.
We point out that the last term is not necessary
in the connected/connected case.
Therefore, an equivalent norm on $\X^1_{0\eps}(\Om)$ is given by
\begin{equation}\label{eq:a7}
\Vert w\Vert^2_{\X^1_{0\eps}(\Om)}\sim \Vert \nabla w\Vert^2_{L^2(\Om)}+\Vert[w]\Vert^2_{L^2(\Memb)};
\end{equation}
again, the last term can be dropped in the connected/connected case.

\subsection{Position of the problem}\label{ss:position}

Let $\capuno,\capdue$ be strictly positive constants and
$\dfint,\dfout,\dfdis$ be $Y$-periodic bounded and symmetric matrices such that
there exist $\const_0,\widetilde\const_0>0$ with
\begin{equation}\label{eq:matrix}
\begin{aligned}
&\const_0|\zeta|^2\leq \dfint(y) \zeta\cdot\zeta\leq\widetilde\const_0|\zeta|^2,\qquad\text{for every
$\zeta\in\RN$ and a.e. $y\in Y$;}
\\
&\const_0|\zeta|^2\leq \dfout(y) \zeta\cdot\zeta\leq\widetilde\const_0|\zeta|^2,\qquad\text{for every
$\zeta\in\RN$ and a.e. $y\in Y$;}
\\
&\const_0|\zeta|^2\leq \dfdis(y) \zeta\cdot\zeta\leq\widetilde\const_0|\zeta|^2,\qquad\text{for every
$\zeta\in\RN$ and a.e. $y\in Y$.}
\end{aligned}
\end{equation}
Moreover, set $\dfinte(x)=\dfint(\eps^{-1}x)$, $\dfoute(x)=\dfout(\eps^{-1}x)$, $\dfdise(x)=\dfdis(\eps^{-1}x)$
for a.e. $x\in\Om$.

As in \cite{Amar:Andreucci:Timofte:2019A}, let us consider a locally Lipschitz continuous function $g:\R^2\to\R$, such that $g(p,1)\geq 0$ and $g(p,0)\leq 0$. The example
we have in mind here is a function of the form
\begin{equation}\label{eq:gating3bis}
g(p,q)= \aaa(p)( q-1) +\bbb(p)q,
\end{equation}
where $\aaa,\bbb:\R\to\R$ are positive, bounded and Lipschitz functions. Notice that the form of $g$ in \eqref{eq:gating3bis} is classical
in this framework (see, for instance, \cite{Veneroni:2009}) and that $g$ is Lipschitz continuous with
respect to $p$ and affine with respect to $q$.
Let $\Iion:\R^2\to\R$ be given by
\begin{equation}\label{eq:ion3}
I_{\textrm{ion}}(p,q)= h_1(p) +h_2(p)q,
\end{equation}
where $h_1,h_2$ are Lipschitz continuous functions and $h_2$ is bounded.
Let $\win\in L^\infty(\Om)$, with $0\leq\win(x)\leq 1$ a.e. in $\Om$ and
$p^\eps\in L^2(\Omout_T)$.
Consider
the gating equation
\begin{alignat}2
\label{eq:gating1}
&\partial_t \widetilde w^\eps_{p^\eps}+g(p^\eps,\widetilde w^\eps_{p^\eps})=0,\qquad &\text{in $\Omout_T$;}
\\
  \label{eq:gating2}
& \widetilde w^\eps_{p^\eps}(x,0)=\win(x),\qquad &\text{in $\Omout$.}
\end{alignat}
Notice that,  by classical results, the previous problem admits a unique solution $\widetilde w_p^\eps\in H^1(0,T;L^\infty(\Omout))$
and, from our assumptions, $0\leq\widetilde w^\eps_{p^\eps}(x,t)\leq 1$ a.e. in $\Omout_T$, since
$0\leq\win(x)\leq 1$ a.e. in $\Omout$.
This is a standard result for ODEs, taking into account that the spatial variable plays here only the role of a parameter (for similar results,
see, for instance, \cite{Collin:Imperiale:2018,CDP2,JPr}).

Moreover, we can write
\begin{equation}\label{eq:gating5}
\widetilde w^\eps_{p^\eps}(x,t)=\win(x)+\int_0^tg(p^\eps,\widetilde w^\eps_{p^\eps})\di \tau\,,\qquad \hbox{for a.e. $x\in \Om$.}
\end{equation}
From the previous assumptions, we can prove that $\Iion$ is a uniformly globally Lipschitz continuous function, i.e.
there exists a strictly positive constant $\const_I$, independent of $\eps$, such that
\begin{equation}
\label{eq:ion1}
\big\Vert \Iion(p^\eps_{1},\widetilde w^\eps_{p^\eps_{1}})-\Iion(p^\eps_{2},\widetilde w^\eps_{p^\eps_{2}})\big\Vert_{L^2(\Omout_T)}
\leq \const_I\Vert p^\eps_{1}-p^\eps_{2}\Vert_{L^2(\Omout_T)}\,,
\end{equation}
due to the uniform Lipschitz dependence of $\widetilde w^\eps_{p^\eps}$ on $p^\eps$
and to the bound $0\leq\widetilde w^\eps_{p^\eps}(x,t)\leq 1$ a.e. in $\Omout_T$.

\begin{remark}\label{r:r4}
Different examples of functions $\Iion$ and $g$ are considered in the literature. We consider here
a Hodgkin-Huxley type model (see \eqref{eq:gating3bis}--\eqref{eq:ion3}), as in \cite{Collin:Imperiale:2018,Veneroni:2009}.
However, we point out that the results obtained in this paper are also valid for a regularized version of the Mitchell-Schaeffer model proposed in
\cite{CDP2} (see, also, \cite{Davidovic:2016}).
For this last model, the ionic current $I_{\textrm{ion}}$ satisfies \eqref{eq:ion1} and
\begin{equation}
\label{eq:ion2}
\Iion(0, \widetilde w_0(x,t))=0\,,\qquad\hbox{a.e. in $\Omout_T$}\,,
\end{equation}
the function $g$ is supposed to be an affine function with respect to $q$ and smooth with respect to $p$.
More precisely, assuming $p$ given, the ionic current $I_{\textrm{ion}}(p,q)$ is defined as being
\[
I_{\textrm{ion}}(p,q)=\frac{1}{\tau_{\textrm{in}}} q p^2(p-1)e^{-(p/p_{th})^2}- \frac{1}{\tau_{\textrm{out}}} p (1+r_{max}e^{-(p_{th}/p)^2})
\]
and the function $g$ is given by
\[
g(p,q)= \left (\frac{1}{\tau_{\textrm{cl}}}+\frac{\tau_{\textrm{cl}}-\tau_{\textrm{op}}}{\tau_{\textrm{cl}}\tau_{\textrm{op}}} q_\infty(p) \right ) (q-q_\infty(p)),
\]
with
\[
q_\infty(p)=1- e^{-(p_{gate}/p)^2}.
\]
Here, all the model parameters are supposed to be positive constants (see, for the interpretation of these constants, \cite{CDP2,
Davidovic:2016}). Further, it is assumed that
$$
0 < \tau_{\textrm{op}} < \tau_{\textrm{cl}}\,\quad
p_{th}\gg p_{gate}\,
\quad\hbox{and}\quad
r_{max} \gg 1\,.
$$
Under the same assumptions we made for the initial data, one can prove that, also for the Mitchell-Schaeffer model,
the ionic function verifies condition \eqref{eq:ion1}; moreover,
$\widetilde w(t, \cdot),\, \partial_t \widetilde w(t, \cdot)\in L^\infty(\Omega^B)$ and $0 < \widetilde w(t, x)\leq  1$
(see \cite[Lemma 6 and Proposition 17]{Davidovic:2016} or
\cite[Proposition 1]{CDP2}).
\end{remark}

We give here a complete formulation of the problem we shall address in this paper
(the operators $\Div$ and $\nabla$ act
only with respect to the space variable $x$).
\medskip

Let $\espo\geq -1$. Assume that $f_1,f_2\in L^2(0,T;H^1_0(\Om))$, $\overline v_{0}\in L^2(\Om)$ and,
for every $\eps>0$, let  $s_{0\eps}\in L^2(\Memb)$ be such that
\begin{equation}\label{eq:a20}
\frac{1}{\eps^\espo}\int_{\Memb} s_{0\eps}^2\di\sigma\leq \const\,,
\end{equation}
for a suitable $\const>0$, independent of $\eps$.

Let us consider the problem for $\ubuno,\ubdue\in L^2(0,T;H^1_{null}(\Omout)),\ud\in L^2(0,T;H^1_{null}(\Omint))$ and
$\we\in H^1(0,T;L^\infty(\Omout))$ given by
\begin{alignat}2
  \label{eq:PDEin}
&\frac{\partial}{\partial t}(\ubuno\!-\ubdue)-\Div(\dfinte \nabla \ubuno)+I_{ion}(\ubuno\!-\ubdue,\we)\!=\!f_1,\ &\text{in $\Omout_T$;}
  \\
  \label{eq:PDEout}
&\frac{\partial}{\partial t}(\ubuno\!-\ubdue)+\Div(\dfoute \nabla \ubdue)+I_{ion}(\ubuno\!-\ubdue,\we)\!=\!f_2,\ &\text{in $\Omout_T$;}
  \\
  \label{eq:PDEdis}
&-\Div(\dfdise \nabla \ud)=0,&\text{in $\Omint_T$;}
\\
\label{eq:flux1}
&  \dfinte\nabla\ubuno\cdot\nue =0,&\text{on $\Memb_T$;}
\\
\label{eq:flux2}
&  \dfoute\nabla\ubdue\cdot\nue -\dfdise\nabla\ud\cdot\nue =0,&\text{on $\Memb_T$;}
\\
  \label{eq:Circuit}
&  \frac{\capuno}{\eps^\espo}\frac{\partial}{\partial t}(\ubdue-\ud)+
\frac{\capdue}{\eps^\espo}  (\ubdue-\ud)=\dfoute\nabla \ubdue\cdot\nue,
&\text{on $\Memb_T$;}
  \\
  \label{eq:BoundData}
&  \ubuno(x,t),\ubdue(x,t),\ud(x,t)=0,&\text{on $\partial\Om\times(0,T)$;}
  \\
  \label{eq:InitData1}
& \ubuno(x,0)-\ubdue(x,0)=\overline v_{0}(x),&\text{in $\Omout$;}
\\
  \label{eq:InitData3}
& \ubdue(x,0)-\ud(x,0)=s_{0\eps}(x),&\text{on $\Memb$,}
\end{alignat}
where $\widetilde w^\eps$ is the solution of the gating problem \eqref{eq:gating1}-\eqref{eq:gating2},
with $p^\eps=\ubuno-\ubdue$.

\begin{remark}[Biological interpretation]\label{r:r3}
The previous system of equations represents the coupling of a standard bidomain model in $\Omout$, for the intra and the extra-cellular
potentials $\ubuno$ and $\ubdue$ of the healthy zone, with a Poisson equation in the diffusive part $\Omint$, for the electrical potential
$\ud$ of the damaged zone. The damaged zone is modeled as a passive conductor, justifying the quasi-static assumption in \eqref{eq:PDEdis},
as done in \cite{CDP2}.
The function $\ubdue-\ubuno$ is the so-called transmembrane potential.
The sources $f_1$ and $f_2$  are the internal and the external current stimulus, respectively.
The coefficients $\dfint,\dfout$ and $\dfdis$ are the conductivities of the two healthy phases and of the damaged one, respectively;
following \cite{CDP2}, where also numerical simulations are considered, we assume that such conductivities are of the same order.
On the other hand, $\capuno$ and $\capdue$ are given parameters related to the capacitive and the resistive behaviour of the interface $\Memb$.
We point out that for the intra-cellular potential $\ubuno$ we assume no flux condition on $\Memb$ (see \eqref{eq:flux1}), while the extra-cellular
potential $\ubdue$ is coupled with the electrical potential $\ud$ of the damaged zone through non-standard imperfect transmission conditions (see \eqref{eq:flux2} and \eqref{eq:Circuit}). Here, the choice of the scaling parameter $\ell$, leading to different macroscopic behaviours,
is related to the speed of the interfacial exchange.
Our system is completed with suitable initial and boundary conditions.
The variable $\widetilde w^\eps$, called the {\it gating variable}, describes the ionic transport through the cell membrane.
The terms $g$ and $I_{ion}$ are nonlinear functions, modeling the membrane ionic currents.

For simplicity, we consider only one gating variable, but our results hold true also for the case in which the gating variable is vector valued.
\end{remark}

Notice that, by setting $\ve=p^\eps=\ubuno-\ubdue$, $\ue=\ubdue$ a.e. in $\Omout_T$, $\ue=\ud$ a.e. in $\Omint_T$,
and denoting by $[\cdot]$ the jump across $\Memb$ of the quantity in the square brackets, i.e.,
$[\ue]=\ubdue-\ud$ and $[\dfbothe\nabla \ue\cdot\nue]=(\dfoute\nabla \ubdue-\dfdise\nabla \ud)\cdot\nue$,
the previous system can be written in the more convenient form
\begin{alignat}2
  \label{eq:PDEinc}
&\frac{\partial \ve}{\partial t}-\Div(\dfinte \nabla \ve)+I_{ion}(\ve,\we)\!=\!f_1+\Div(\dfinte\nabla \ue),\ &\text{in $\Omout_T$;}
  \\
  \label{eq:PDEoutc}
&-\Div((\dfinte+\dfoute) \nabla \ue)\!=\!f_1-f_2+\Div(\dfinte\nabla \ve),\ &\text{in $\Omout_T$;}
  \\
  \label{eq:PDEdisc}
&-\Div(\dfdise \nabla \ue)=0,&\text{in $\Omint_T$;}
\\
\label{eq:flux1c}
&  \dfinte\nabla ( \ve+\ue)\cdot\nue =0,&\text{on $\Memb_T$;}
\\
\label{eq:flux2c}
&  [\dfbothe\nabla \ue\cdot\nue] =0,&\text{on $\Memb_T$;}
\\
  \label{eq:Circuitc}
&  \frac{\capuno}{\eps^\espo}\frac{\partial}{\partial t}[\ue]+\frac{\capdue}{\eps^\espo}  [\ue]=\dfout\nabla \ue\cdot\nue,
&\text{on $\Memb_T$;}
  \\
  \label{eq:BoundDatac}
&  \ve, \ue=0,&\text{on $\partial\Om\times(0,T)$;}
  \\
  \label{eq:InitData1c}
& \ve(x,0)=\overline v_{0}(x),&\text{on $\Omout$;}
\\
  \label{eq:InitData3c}
& [\ue](x,0)=s_{0\eps}(x),&\text{on $\Memb$,}
\end{alignat}
complemented with the gating problem \eqref{eq:gating1}--\eqref{eq:gating2}, where again
$\ubuno-\ubdue$ is replaced by $\ve$.
Clearly, $\ve\in L^2(0,T;H^1_{null}(\Omout))$ and $\ue\in L^2(0,T;\X^1_{0\eps}(\Om))$.
We stress again that, by \eqref{eq:ion1}, the composed function $\Iion(\ve, \widetilde w^\eps)$
is a Lipschitz function with respect to $\ve$.

The weak formulation of the previous problem is given by
\begin{multline*}%\label{eq:weak5}
-\int_{\Omout_T} \ve\partial_t\testb\di x\di t+\int_{\Omout_T}\dfinte\nabla \ve\cdot\nabla \testb\di x\di t
+\int_{\Omout_T}\dfinte\nabla \ue\cdot\nabla\testb\di x\di t
\\
+\int_{\Omout_T} I_{ion}(\ve,\we)\testb\di x\di t
+\int_{\Omout_T}(\dfinte+\dfoute)\nabla \ue\cdot\nabla \testduno \di x\di t
%\\
\end{multline*}
\begin{multline}\label{eq:weak5}
+\int_{\Omout_T}\dfinte\nabla \ve\cdot\nabla \testduno\di x\di t
+\int_{\Omint_T}\dfdise\nabla  \ue\cdot \nabla\testddue\di x\di t
\\
-\frac{\capuno}{\eps^\espo}\int_{\Memb_T}[\ue]\partial_t[\testd]\di\sigma\di t
+\frac{\capdue}{\eps^\espo}\int_{\Memb_T}[\ue][\testd]\di\sigma\di t
\\
=\int_{\Omout_T}f_1\testb\di x\di t+\int_{\Omout_T}(f_1-f_2)\testduno\di x\di t
\\
+\int_{\Omout} \overline v_0\testb(0)\di x
+\frac{\capuno}{\eps^\espo}\int_{\Memb}s_{0\eps}[\testd](0)\di\sigma
\,,
\end{multline}
for every $\testb\in L^2(0,T;H^1_{null}(\Omout))\cap H^1(0,T;L^2(\Omout))$, $\testduno\in L^2(0,T;H^1_{null}(\Omout))$, $\testddue\in L^2(0,T;H^1_{null}(\Omint))$, and $[\testd]\in H^1(0,T;L^2(\Memb))$, with $\testb(T)=0$ and $[\testd](T)=0$.
Here, $[\testd]=\testduno-\testddue$ on $\Memb$ and \eqref{eq:weak5} shall be complemented with the gating problem.
We remark that the notation $\testduno$ for a test function acting in $\Omout$ is slightly counterintuitive, but
it allows us to write concisely the jump of such a function across the interface.

For any $\eps>0$ fixed, by \cite[Theorem 3.6]{Amar:Andreucci:Timofte:2019A}, it follows that the system \eqref{eq:PDEinc}--\eqref{eq:BoundDatac},
complemented with the gating problem \eqref{eq:gating1}--\eqref{eq:gating2}, admits a unique solution $ \ve\in L^2(0,T;H^1_{null}(\Omout))$,
$ \ue\in L^2(0,T;\X^1_{0\eps}(\Om))$ and $\we\in H^1(0,T;L^\infty(\Omout))$, such that
$ \ve\in \CC^0([0,T];L^2(\Omout))$, $ [\ue]\in \CC^0([0,T];L^2(\Memb))$, at least when $\dfinte,\dfoute,\dfdise$ are scalar coefficients
or special matrices as in \cite[Lemma 1]{Bourgault:Coudiere:Pierre:2009}
and \cite[Formula (1)]{K2020} (see, also, \cite{Boulakia:2015} and \cite{Davidovic:2016}).
Even for more general matrices, our homogenization result holds true, if we assume the existence of solutions satisfying
the energy inequality \eqref{eq:energy3} below.

Moreover, by a standard regularization procedure, multiplying \eqref{eq:PDEinc}
by $\ve$, \eqref{eq:PDEoutc} and \eqref{eq:PDEdisc} by $\ue$,
adding the three equations, integrating by parts, using \eqref{eq:flux1c}--\eqref{eq:InitData3c},
moving the integral containing $\Iion$ to the right-hand side, using \eqref{eq:gating3bis}--\eqref{eq:ion1} and
H\"{o}lder and Gronwall inequalities, as in \cite[inequality (2.38)]{Amar:Andreucci:Timofte:2019A}, we get
the following energy estimate:
\begin{multline}\label{eq:energy3}
\sup_{t\in (0,T)}\int_{\Omout}(\ve)^2(x,t)\di x +\int_{\Omout_T}|\nabla \ve+\nabla \ue|^2\di x\di t
+\int_{\Omout_T}|\nabla \ue|^2\di x\di t
\\
+\int_{\Omint_T}|\nabla \ue|^2\di x\di t+\sup_{t\in (0,T)}\frac{1}{\eps^\espo}\int_{\Memb}[\ue]^2(x,t)\di\sigma
+\frac{1}{\eps^\espo}\int_{\Memb_T}[\ue]^2\di \sigma\di t
\\
\leq\const\left(\Vert f_1\Vert^2_{L^2(\Om_T)}+\Vert f_2\Vert^2_{L^2(\Om_T)}+\Vert \overline v_0\Vert^2_{L^2(\Om)}
+\frac{1}{\eps^\espo}\Vert s_{0\eps}\Vert^2_{L^2(\Memb)}+1\right)\,,
\end{multline}
where $\const$ is independent of $\eps$.
Notice that, by \eqref{eq:energy3}, it follows also that
\begin{equation}\label{eq:energy4}
\int_{\Omout_T}|\nabla \ve\vert^2\di x\di t\leq \const\left(\Vert f_1\Vert^2_{L^2(\Om_T)}+\Vert f_2\Vert^2_{L^2(\Om_T)}+
\Vert \overline v_0\Vert^2_{L^2(\Om)}+\frac{1}{\eps^\espo}\Vert s_{0\eps}\Vert^2_{L^2(\Memb)}+1\right).
\end{equation}
Finally, taking into account condition \eqref{eq:a20}, it follows that the right-hand side in \eqref{eq:energy3} and \eqref{eq:energy4}
is uniformly bounded with respect to $\eps$. Therefore, recalling that, both in the connected/connected case and in
the connected/disconnected one, the trace of $\ve$ and $\ue$ on $(\partial\Om\cap\partial\Omout)\times(0,T)$ is null and
using the Poincar\'{e} inequality \eqref{eq:poincare1}, we get
\begin{equation}\label{eq:energy5}
\begin{aligned}
& \Vert \ve\Vert_{L^2(\Omout_T)}+\Vert \nabla\ve\Vert_{L^2(\Omout_T)}\leq\const;
\\
& \Vert \ue\Vert_{L^2(\Omout_T)}+\Vert \nabla\ue\Vert_{L^2(\Omout_T)}\leq\const;
\\
& \Vert \ue\Vert_{L^2(\Omint_T)}+\Vert \nabla\ue\Vert_{L^2(\Omint_T)}\leq\const;
\\
& \sup_{t\in(0,T)}\frac{1}{\eps^\espo}\int_{\Memb_T} [\ue]^2\di \sigma\leq\const.
\end{aligned}
\end{equation}

\section{Time-depending unfolding operator}
\label{s:time_unfolding}

A space-time version of the unfolding operator in a more general framework,
in which also a time-microscale is actually present,  has been introduced
in \cite{Amar:Andreucci:Bellaveglia:2015B} and \cite{Amar:Andreucci:Bellaveglia:2015A},
to which we also refer for a survey on this topic.
However, in the present case, the
time variable does not play any special role and can be treated essentially as
a parameter; hence, the properties of the unfolding operator can be found in \cite{Cioranescu:Damlamian:Donato:Griso:Zaki:2012,
Cioranescu:2018}.

Here, we recall only the definitions and the main convergence results needed in the following.

For $\xi\in\Xi_\eps$, we define
\begin{equation*}
  \Xi_{\eps}
  =
  \left\{\xi\in \mathbb{Z}^N\,: \quad \eps(\xi+Y)\subset\Om\right\}
\end{equation*}
and set
\begin{equation*}
  \intOset
  =
  \text{interior}\left\{\bigcup_{\xi\in \Xi_{\eps}} \eps(\xi+\overline{\cell})   \right\}
  \,,
\qquad
  \intspacetime
  =
  \intOset\times(0,T)
  \,.
\end{equation*}

Denoting by $[r]$ the integer part and by $\{r\}$ the fractional part of $r\in\R$, we define for $x\in\RN$
\begin{equation*}
  \macropart{x}{\eps}
  =
  \Big(
  \intepart{\frac{x_{1}}{\eps}}
  ,
  \dots
  ,
  \intepart{\frac{x_N}{\eps}}
  \Big)
  \qquad\text{and}\qquad
  \micropart{x}{\eps}
  =
  \Big(
  \left\{\frac{x_{1}}{\eps}\right\}
  ,
  \dots
  ,
  \left\{\frac{x_N}{\eps}\right\}
  \Big)
  \,,
\end{equation*}
so that
\begin{equation*}
  x
  =
  \eps\left(\macropart{x}{\eps}+\micropart{x}{\eps}\right)
  \,.
\end{equation*}
Then, we introduce the space cell containing $x$ as $  \celloc{(x)} =  \eps\Big(\displaystyle\macropart{x}{\eps}
  +  \cell  \Big)$.

\begin{defin}\label{d:oldunfop}
For $\genfun$ Lebesgue-measurable on $\Om_T$, the (time-depending) periodic
unfolding operator $\unfop$ is defined as
\begin{equation*}
  \unfop(\genfun)(x,t,y)
  =
  \left\{
    \begin{alignedat}2
      &\genfun \left(\eps\macropart{x}{\eps}+\eps y, t \right)
      \,,
      &\quad&
      (x,t,y)\in \intspacetime\times\cell
      \,,
      \\
      &0 \,,
      &\quad&
      \text{otherwise.}
    \end{alignedat}
  \right.
\end{equation*}
For $\genfun$ Lebesgue-measurable on $\Memb_T$, the (time-depending) boundary unfolding operator $\btsunfop$
is defined as
\begin{equation*}
  \btsunfop(\genfun)(x,t,y)
  =
  \left\{
    \begin{alignedat}2
      &\genfun \left(\eps\macropart{x}{\eps}+\eps y, t \right)
      \,,
      &\quad&
      (x,t,y)\in \intspacetime\times\Permemb \,,
      \\
      &0 \,,
      &\quad&
      \text{otherwise.}
    \end{alignedat}
  \right.
\end{equation*}
\end{defin}

Clearly, for $\genfun_{1}$, $\genfun_{2}$, as in Definition~\ref{d:oldunfop},
\begin{equation}
  \label{eq:unfop_product}
  \unfop(\genfun_{1}\genfun_{2})
  =
  \unfop(\genfun_{1})
  \unfop(\genfun_{2})
\end{equation}
and the same property holds for the boundary unfolding operator.
Note that $\btsunfop(\genfun)$ is the trace of the unfolding operator
on $\intspacetime\times\Permemb$, when both the operators are defined.
\bigskip

We need also an average operator in space.
\begin{defin}  \label{d:local_averages}
  Let $\genfun$ be integrable in $\Om_T$.
  The local (time-depending) space average operator is defined by
 \begin{equation}
   \label{eq:local_s}
   \average(\genfun)(x,t)
   =
   \left\{
   \begin{alignedat}2
     &\frac{1}{\eps^N}
     \int_{\celloc{(x)}}
     \genfun(\zeta,t)
     \di \zeta
     \,,
     &\quad&
     \text{if}
     \,
     (x,t)\in
     \intspacetime\,,
     \\
     &0
     \,,
     &\quad&
     \text{otherwise.}
   \end{alignedat}
   \right.
 \end{equation}
\end{defin}

\begin{remark}
  \label{r:averages}
  From the above definitions, it follows that
  \begin{equation}
    \label{eq:local_s_ii}
    \average(\genfun)(x,t)
    =
    \int_{\cell}
    \unfop(\genfun)(x,t,y)
    \di y
    =
    \saverage(\unfop(\genfun))(x,t)
    \,,
  \end{equation}
  where $\saverage(\unfop(\genfun))$ denotes the mean average of $\unfop(\genfun)$  over $Y$.

 More in general, given an open set $\mathcal{G}\subset\R^N$, we denote by $\mathcal{M}_{\mathcal{G}}(\genfun)$ the mean average of $\genfun$ over
 $\mathcal{G}$.
\end{remark}
\medskip

\begin{prop}
  \label{t:smalleps_grad_weak_conv}
  Let $\genfun\in L^{2}(\Om_T)$. Then,
\begin{equation}\label{eq:convmedia}
\frac{1}{\eps}
   \left[\unfop(\genfun)-\average(\genfun)\right]
    \to
    y^c
    \cdot
    \nabla \genfun\,,
    \quad
    \text{strongly in}
    \,
    L^2(\Om_T\times\cell)\,,
\end{equation}
  where
  \begin{equation*}
    y^c=\left(
      y_1-\frac{1}{2}
      \,,
      y_2-\frac{1}{2}
      \,,
      \cdots
      ,
      y_N-\frac{1}{2}
    \right)
    \,.
  \end{equation*}
  Let $\{\genfune\}$ be a sequence converging weakly to $\genfun$ in
  $L^2\big(0,T;H^1_0(\Om)\big)$. Then, up to a subsequence, there
  exists $\hatfun=\hatfun(x,t,y)\in
  L^2\big(\Om_T;H^{1}_{\#}(\cell))$, with
  $\saverage(\hatfun)=0$, such that, as $\eps\to 0$,
  \begin{align}
    \label{eq:smalleps_grad_weak_conv_i}
    \unfop(\nabla\genfune)
    &\wto
    \nabla\genfun
    +\nabla_y \hatfun
    \,,
    \quad
    \text{weakly in}
    \,
    L^2(\Om_T\times\cell)
    \,,
    \\
    \label{eq:smalleps_grad_weak_conv_ii}
    \frac{1}{\eps}
    \left[\unfop(\genfune)-\average(\genfune)\right]
    &\wto
    y^c
    \cdot
    \nabla \genfun
    +\hatfun
    \,,
    \quad
    \text{weakly in}
    \,
    L^2(\Om_T; H^1_\#(\cell))
    \,.
  \end{align}
  \end{prop}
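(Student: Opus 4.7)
I would treat the three convergences in sequence, with the identity $\nabla_{y}[\unfop(\genfune)] = \eps\,\unfop(\nabla\genfune)$ and the commutation $\saverage(\unfop(\cdot))=\average(\cdot)$ (Remark~\ref{r:averages}) as the backbone. For \eqref{eq:convmedia}, I first verify the claim for $\genfun\in C^{1}(\overline{\Om_T})$ by a direct expansion: for $(x,t)\in\intspacetime$ and $y\in\cell$,
\begin{equation*}
\unfop(\genfun)(x,t,y)-\average(\genfun)(x,t) = \int_{\cell}\bigl[\genfun\bigl(\eps\macropart{x}{\eps}+\eps y,t\bigr) -\genfun\bigl(\eps\macropart{x}{\eps}+\eps y',t\bigr)\bigr]\,\di y',
\end{equation*}
and a first-order Taylor expansion in $\eps$ around $\eps\macropart{x}{\eps}$, using $\int_{\cell}y'\,\di y'=(1/2,\ldots,1/2)$, exhibits the leading order $\eps\,(y-\bar y)\cdot\nabla\genfun = \eps\,y^{c}\cdot\nabla\genfun$ plus an $O(\eps^{2})$ remainder. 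Division by $\eps$ and the strong convergence $\unfop(\nabla\genfun)\to\nabla\genfun$ in $L^{2}(\Om_T\times\cell)$ (standard for continuous integrands) give the result on smooth $\genfun$. Extension to general $\genfun$ with $\nabla\genfun\in L^{2}(\Om_T)$ follows by density via the uniform estimate
\begin{equation*}
\bigl\|\tfrac{1}{\eps}[\unfop(\genfun)-\average(\genfun)]\bigr\|_{L^{2}(\Om_T\times\cell)} \leq \const\,\|\nabla\genfun\|_{L^{2}(\Om_T)},
\end{equation*}
itself a consequence of Poincaré--Wirtinger on $\cell$ applied to the zero-mean quantity $\unfop(\genfun)-\average(\genfun)$ combined with the gradient identity.

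For \eqref{eq:smalleps_grad_weak_conv_ii}, I introduce $\Phi^{\eps}:=\eps^{-1}[\unfop(\genfune)-\average(\genfune)]$. The boundedness of $\genfune$ in $L^{2}(0,T;H^{1}_{0}(\Om))$ combined with the estimate just displayed makes $\Phi^{\eps}$ bounded in $L^{2}(\Om_T\times\cell)$, while the gradient identity forces $\nabla_{y}\Phi^{\eps}=\unfop(\nabla\genfune)$ bounded in $L^{2}(\Om_T\times\cell)$; altogether $\Phi^{\eps}$ is bounded in $L^{2}(\Om_T;H^{1}(\cell))$ with $\saverage(\Phi^{\eps})=0$. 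Extract $\Phi^{\eps}\wto\Phi$, so also $\saverage(\Phi)=0$. To identify $\Phi$, I decompose $\genfune = \varphi + (\genfune-\varphi)$ with $\varphi\in C^{\infty}_{c}(\Om_T)$ approximating $\genfun$ in $H^{1}$-norm: Step~1 forces the smooth part to converge strongly to $y^{c}\cdot\nabla\varphi$, while the residual is bounded and zero-mean on $\cell$ and admits a weak limit depending continuously on $\varphi$; letting $\varphi\to\genfun$ isolates $\hatfun\in L^{2}(\Om_T;H^{1}_{\#}(\cell))$ with $\saverage(\hatfun)=0$ such that $\Phi = y^{c}\cdot\nabla\genfun+\hatfun$. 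The convergence \eqref{eq:smalleps_grad_weak_conv_i} is then immediate: $\unfop(\nabla\genfune)=\nabla_{y}\Phi^{\eps}\wto\nabla_{y}\Phi=\nabla\genfun+\nabla_{y}\hatfun$ weakly in $L^{2}(\Om_T\times\cell)$.

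The delicate point is the $\cell$-periodicity of $\hatfun$. The approximants $\Phi^{\eps}$ are \emph{not} periodic in $y$: opposite faces of $\cell$ correspond to points sitting in adjacent cells of $\Om$, so the non-periodic linear term $y^{c}\cdot\nabla\genfun$ is precisely what absorbs the cross-cell mismatches, leaving a periodic remainder $\hatfun$ in the limit. To make this rigorous I would rely on the boundary unfolding operator $\btsunfop$ and trace matching across neighbouring $\eps$-cells, or equivalently test the limit against $\cell$-periodic divergence-free vector fields in $y$, both classical tools in the unfolding framework.
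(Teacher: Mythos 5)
First, a point of reference: the paper does not prove this proposition at all. It is recalled verbatim from the unfolding literature (the paper explicitly points to \cite{Cioranescu:Damlamian:Donato:Griso:Zaki:2012} and \cite{Cioranescu:2018} for "the properties of the unfolding operator", time being only a parameter here). So there is no in-paper proof to compare against; what you have reconstructed is, in its main lines, the standard textbook argument, and it is essentially correct. The backbone you chose is the right one: the identity $\nabla_y\unfop(\genfune)=\eps\,\unfop(\nabla\genfune)$, the zero $Y$-mean of $\unfop(\genfun)-\average(\genfun)$, the Poincar\'e--Wirtinger bound giving equi-boundedness of $\Phi^\eps$ in $L^2(\Om_T;H^1(\cell))$, the Taylor expansion on smooth functions for \eqref{eq:convmedia}, and the derivation of \eqref{eq:smalleps_grad_weak_conv_i} from \eqref{eq:smalleps_grad_weak_conv_ii} by taking $\nabla_y$ of the weak limit. (Minor hygiene: the statement's hypothesis should of course be read as $\genfun\in L^2(0,T;H^1(\Om))$, as you implicitly do, and for the Taylor remainder you should take the dense class to be $C^2$ or $C^\infty_c$ rather than $C^1$ if you want an $O(\eps^2)$ error; with $C^1$ you only get $o(\eps)$, which still suffices.)

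Two remarks on the weak-convergence part. The intermediate decomposition $\genfune=\varphi+(\genfune-\varphi)$ does no real work: since $\genfune\to\genfun$ only weakly, the residual is not small in norm, and all the decomposition ultimately yields is the tautological definition $\hatfun:=\Phi-y^c\cdot\nabla\genfun$ together with $\saverage(\hatfun)=0$ (which follows more directly from $\saverage(\Phi^\eps)=0$ and $\saverage(y^c\cdot\nabla\genfun)=0$). You could delete that step without loss. The genuine content of the proposition — and you correctly identify it as such — is the $\cell$-periodicity of $\hatfun$, and this is the one place where your argument remains a sketch rather than a proof: you name the two classical mechanisms (trace matching of $\Phi^\eps(x,\cdot+\ej)-\Phi^\eps(x,\cdot)$ across adjacent $\eps$-cells via $\unfop(\genfun)(x,y+\ej)=\unfop(\genfun)(x+\eps\ej,y)$, with the affine term $y^c\cdot\nabla\genfun$ absorbing the resulting jump $\partial_j\genfun$; or, equivalently, testing $\unfop(\nabla\genfune)-\nabla\genfune$ against $\cell$-periodic, $y$-divergence-free, zero-mean fields and invoking the de Rham-type characterization of periodic gradients), but you do not carry either out. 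Both routes do close the gap and are exactly what the cited references do, so the plan is sound; as written, however, the key step of the proposition is asserted rather than proved.
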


\medskip

For later use, we set
\begin{multline}\label{nneq:a20}
\X^1_\#(Y):=\{ \hat w=(\hat w^B,\hat\genfun^D)\ :\  \hat\genfun^B=\hat\genfun_{\mid{\Perout}}\in H^1(\Perout),
\\
\hat\genfun^D=\hat\genfun_{\mid{\Perint}}\in H^1(\Perint),\ \hat\genfun\text{ is $Y$-periodic}\}.
\end{multline}
\medskip

\begin{prop}\label{p:comp AB}
Let $\{\genfune\}\subset L^2(0,T;\X^1_{0\eps}(\Om))$ and assume that we are in the connec\-ted/connected geometry.
Assume that there exists $\const>0$ (independent of $\eps$) such that
\begin{equation}\label{stima comp AB}
\int_{\Om_T}|\genfune|^2\di x\di t+\int_{\Om_T}|\nabla \genfune|^2\di x\di t\leq \const\,,
\quad\quad
\forall\eps>0.
\end{equation}
Then, there exists $\genfun\in L^2(\Om_T;\X^1_\#(Y))$, whose
restrictions to $\Perint$ and $\Perout$ satisfy
\begin{equation*}
\begin{aligned}
& \genfun_{\mid_{\Perout}}(x,t,y)=:\genfun^B(x,t)\in L^2(0,T;H^1(\Om))\,,\quad  & \hbox{for a.e. $y\in\Perout$,}
\\
& \genfun_{\mid_{\Perint}}(x,t,y)=:\genfun^D(x,t)\in L^2(0,T;H^1(\Om))\,, \quad & \hbox{for a.e. $y\in\Perint$,}
\end{aligned}
\end{equation*}
and there exists $\hat\genfun\in L^2(\Om;\X^1_\#(Y)/\R)$ such that, up to subsequence,
as $\eps\to 0$, we have
\begin{alignat}{2}
\label{conv comp AB1}& \unfop(\chi_{\Omint}\genfune)\wto \chi_{\Perint}\genfun^D\!\!\!,\ \
\unfop(\chi_{\Omout}\genfune)\wto \chi_{\Perout}\genfun^B\!\!\!,\quad  && \hbox{weakly in } L^2(\Om_T\times Y)\,;
\\
\label{conv comp AB3}& \unfop(\chi_{\Omint}\nabla\genfune)\wto \chi_{\Perint}\left(\nabla\genfun^D+\nabla_y\hat\genfun^D\right)\,,
  && \hbox{weakly in } L^2(\Om_T\times Y)\,;
  \\
\label{conv comp AB4}& \unfop(\chi_{\Omout}\nabla\genfune)\wto \chi_{\Perout}\left(\nabla\genfun^B+\nabla_y\hat\genfun^B\right)\,,
  && \hbox{weakly in } L^2(\Om_T\times Y)\,,
  \end{alignat}
where, for ${\mathcal O}\subseteq \R^N$, $\chi_{\mathcal O}$ denotes the characteristic function of $\mathcal O$.
Moreover, we have also
\begin{equation}\label{eq:a31}
\eps\int_{\Memb_T} [\genfune]^2\di\sigma\di t\leq 2\eps\int_{\Memb_T} \left(|\genfune^B|^2
+|\genfune^D|^2\right)\di\sigma\di t\leq \const\,,\qquad\forall\eps>0\,,
\end{equation}
with $\const$ independent of $\eps$, and
\begin{equation}\label{conv comp AB5}
\btsunfop([\genfune])\wto [\genfun]\,,\qquad \hbox{weakly in } L^2(\Om_T\times\Permemb)\,,
\end{equation}
where $[\genfune]=\genfune^B-\genfune^D$ and $[\genfun]=\genfun^B-\genfun^D$ and
we have denoted by $\genfune^D,\genfune^B$ the trace on $\Memb$ of $\genfune$ from $\Omint$ and $\Omout$, respectively.
Moreover, on $\Permemb$, we have also identified $\genfun^D,\genfun^B$ with their traces.
\end{prop}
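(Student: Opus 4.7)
The strategy is to reduce \eqref{conv comp AB1}--\eqref{conv comp AB4} to the classical unfolding convergences of Proposition~\ref{t:smalleps_grad_weak_conv} by applying uniform extension operators phase by phase, and to obtain the trace bound \eqref{eq:a31} and boundary convergence \eqref{conv comp AB5} from scaled trace estimates on the reference cell.

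Let $\genfune^B:=\genfune|_{\Omout}$ and $\genfune^D:=\genfune|_{\Omint}$. From \eqref{stima comp AB} and the equivalence of norms on $\X^1_{0\eps}(\Om)$, each $\genfune^i$ is bounded in $L^2(0,T;H^1)$ of the respective subdomain, uniformly in $\eps$. In the connected/connected geometry, the hypotheses from \cite[Section~3.2.1]{Hopker:2016} provide uniformly bounded extension operators $P_\eps^B:H^1_{null}(\Omout)\to H^1_0(\Om)$ and $P_\eps^D:H^1_{null}(\Omint)\to H^1_0(\Om)$. Setting $\tilde\genfune^i:=P_\eps^i\genfune^i$, each sequence $\{\tilde\genfune^i\}$ is bounded in $L^2(0,T;H^1_0(\Om))$, and hence, along a subsequence, converges weakly to some $\genfun^i\in L^2(0,T;H^1_0(\Om))$.

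Applying Proposition~\ref{t:smalleps_grad_weak_conv} to $\tilde\genfune^i$ produces correctors $\hat{\tilde\genfun}^i\in L^2(\Om_T;H^1_\#(Y))$ of zero $Y$-mean such that $\unfop(\tilde\genfune^i)\wto\genfun^i$ and $\unfop(\nabla\tilde\genfune^i)\wto\nabla\genfun^i+\nabla_y\hat{\tilde\genfun}^i$ weakly in $L^2(\Om_T\times Y)$. Since $\genfune=\tilde\genfune^B$ on $\Omout$ and $\genfune=\tilde\genfune^D$ on $\Omint$, the unfolding identities $\unfop(\chi_{\Omout}\genfune)=\chi_{\Perout}\unfop(\tilde\genfune^B)$ and $\unfop(\chi_{\Omint}\genfune)=\chi_{\Perint}\unfop(\tilde\genfune^D)$ hold (with analogous relations for the unfolded gradients). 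Passing to the weak limits and restricting the components to $\Perout$ and $\Perint$ assembles the limit $\genfun\in L^2(\Om_T;\X^1_\#(Y))$, whose phase components $\genfun^B,\genfun^D\in L^2(0,T;H^1(\Om))$ are $y$-independent, together with the corrector $\hat\genfun\in L^2(\Om;\X^1_\#(Y)/\R)$; this yields \eqref{conv comp AB1}, \eqref{conv comp AB3}, \eqref{conv comp AB4}.

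For \eqref{eq:a31}, I would use the scaled trace inequality on the reference cell $Y^i$: applying it cell by cell to $\tilde\genfune^i$ and summing over the cells of $\intOset$ gives
\[
\eps\int_{\Memb_T}(\tilde\genfune^i)^2\di\sigma\di t\leq \const\bigl(\|\tilde\genfune^i\|_{L^2(\Om_T)}^2+\eps^2\|\nabla\tilde\genfune^i\|_{L^2(\Om_T)}^2\bigr),
\]
uniformly bounded in $\eps$; the pointwise inequality $[\genfune]^2\leq 2((\genfune^B)^2+(\genfune^D)^2)$ on $\Memb$ then yields \eqref{eq:a31}. For \eqref{conv comp AB5}, note that $\btsunfop(\genfune^i)$ is the trace on $\intspacetime\times\Permemb$ of $\unfop(\tilde\genfune^i)$, so the weak $L^2(\Om_T;H^1(Y^i))$ convergence of $\unfop(\tilde\genfune^i)$ to $\genfun^i$, combined with the continuity of the trace map $H^1(Y^i)\to L^2(\Permemb)$, gives $\btsunfop(\genfune^i)\wto\genfun^i$ weakly in $L^2(\Om_T\times\Permemb)$; subtracting the two phases produces \eqref{conv comp AB5}. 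The main technical obstacle is the construction of the uniformly bounded extension operators in the connected/connected case, which is precisely why the H\"opker geometric hypotheses are imposed; once these are available, the rest is a routine application of standard unfolding theory.
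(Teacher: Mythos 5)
Your proposal is correct, and it arrives at all the stated conclusions, but by a partly different route than the paper. For the volume convergences \eqref{conv comp AB1}--\eqref{conv comp AB4} the paper simply invokes the unfolding compactness theorem for perforated domains (\cite[Theorem 2.13]{Cioranescu:Damlamian:Donato:Griso:Zaki:2012}) applied in $\Omout$ and $\Omint$ separately, whereas you rebuild that result from the full-domain Proposition \ref{t:smalleps_grad_weak_conv} via uniformly bounded extension operators; this is legitimate precisely because the connected/connected hypothesis (with the H\"opker assumptions) guarantees such extensions for \emph{both} phases, and it mirrors how the paper extends $\ve$ at the start of Section \ref{s:homog} --- though your route would not carry over to the connected/disconnected geometry, where the paper instead appeals to Donato--Le Nguyen (see Remark \ref{r:r2}). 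The bound \eqref{eq:a31} is obtained identically (scaled trace inequality cell by cell). The genuine divergence is in \eqref{conv comp AB5}: the paper first extracts a weak limit $W$ of $\btsunfop(\genfune^D)$ from \eqref{eq:a31} and then identifies $W=\genfun^D$ by a Gauss--Green/duality computation in $y$, using $\nabla_y\unfop(\genfune)=\eps\unfop(\nabla\genfune)$ and the already-established volume convergences; you instead note that the unfolded sequence is bounded in $L^2(\Om_T;H^1)$ of each phase (again because $\nabla_y\unfop(\genfune)=\eps\unfop(\nabla\genfune)\to 0$) and conclude by weak continuity of the trace map $H^1(\Perint)\to L^2(\Permemb)$. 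Both arguments are sound; yours is shorter and avoids the test-function computation. One cosmetic point: the extension of an element of $H^1_{null}(\Omint)$ need not land in $H^1_0(\Om)$, so you should either apply Proposition \ref{t:smalleps_grad_weak_conv} in its (equally valid) $H^1(\Om)$ version or drop that subscript; the proposition only asserts $\genfun^B,\genfun^D\in L^2(0,T;H^1(\Om))$ in any case.
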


\begin{proof}
The convergences \eqref{conv comp AB1}--\eqref{conv comp AB4} follow
by \cite[Theorem 2.13]{Cioranescu:Damlamian:Donato:Griso:Zaki:2012}
applied in $\Omint$ and $\Omout$, separately.
Inequality \eqref{eq:a31} is a consequence of the standard trace inequality together with
a rescaling argument. Finally, \eqref{conv comp AB5} follows from the fact that
\begin{equation}\label{eq:a2}
\btsunfop(\genfune^D)\wto \genfun^D\,,
\ \ \btsunfop(\genfune^B)\wto \genfun^B\,,
\qquad \hbox{weakly in } L^2(\Om_T\times\Permemb).
\end{equation}
Indeed, by \eqref{eq:a31}, we obtain that there exists $W\in L^2(\Om_T\times\Permemb)$ such that, up to a subsequence,
$\btsunfop(\genfune^D)\wto W$ weakly in $L^2(\Om_T\times\Permemb)$. Moreover, by Gauss-Green Theorem
and \eqref{conv comp AB1}--\eqref{conv comp AB4}, recalling that $\nabla_y\unfop(\genfune)=\eps\unfop(\nabla\genfune) $, we get
\begin{multline*}
\int_{\Om_T}\!\!\int_{\Permemb} W \varphi\cdot\nu_i\di \sigma \di x\di t\leftarrow
\int_{\Om_T}\!\!\int_{\Memb}\btsunfop(\genfune^D)\varphi\nu_i\di \sigma\di x\di t
=\int_{\Om_T}\!\int_{\Perint}\frac{\partial}{\partial y_i}\big(\unfop(\genfune)\varphi\big)\di y\di x\di t
\\
=\eps\int_{\Om_T}\!\int_{\Perint}\unfop(\partial_i\genfune)\varphi\di y\di x\di t
+\int_{\Om_T}\!\int_{\Perint}\unfop(\genfune)\frac{\partial\varphi}{\partial y_i}\di y\di x\di t
\\
\to \int_{\Om_T}\!\int_{\Perint}\genfun^D\frac{\partial\varphi}{\partial y_i}\di y\di x\di t
=\int_{\Om_T}\!\int_{\Perint}\frac{\partial}{\partial y_i}(\genfun^D\varphi)\di y\di x\di t
=\int_{\Om_T}\!\!\int_{\Permemb}\genfun^D\varphi\cdot \nu_i\di \sigma\di x\di t\,,
\end{multline*}
for any $\varphi \in {\CC}^1(\overline{\Om_T\times Y})$ with $supp(\varphi)\subset\subset \Om_T\times Y$ and
for $i=1,\dots,N$, where $\nu=(\nu_1,\dots,\nu_N)$ is the unit normal vector pointing into $\Perout$. This implies that
$W=\genfun^D$ on $\Permemb$. Clearly, the same procedure can be applied to $\genfune^B$ and $\genfun^B$.
\end{proof}

\begin{remark}\label{r:r2}
Notice that in the connected/disconnected geometry, the result stated in Proposition \ref{p:comp AB}
is still true, up to the fact that, now, $\genfun^D$ belongs only
to the space $ L^2(\Om_T)$ and, consequently, \eqref{conv comp AB3} is replaced by
\begin{equation}\label{eq:a13}
\unfop(\chi_{\Omint}\nabla\genfune)\wto \chi_{\Perint}\left(\nabla\genfun^B+\nabla_y\hat\genfun^D\right),
  \qquad \hbox{weakly in } L^2(\Om_T\times Y).
\end{equation}
Indeed, by \cite[Theorem 4.3]{Donato:Nguyen:2015}, we obtain
$$
\unfop(\chi_{\Omint}\nabla\genfune)\wto \chi_{\Perint}\nabla_y{\hvi}^D,
   \qquad\hbox{weakly in } L^2(\Om_T\times Y),
$$
for a suitable ${\hvi}^D\in L^2(\Om_T;H^1(\Perint))$, and, by
\cite[Remark 4.4]{Donato:Nguyen:2015}, we can identify $\nabla_y{\hvi}^D= \nabla \genfun^B+\nabla_y\hat\genfun^D$.
Moreover, it is worthwhile to remark that, in the connected/dis\-conneted geometry, the space $H^1(\Perint)$
coincides with $H^1_\#(\Perint)$.
\end{remark}

\section{Homogenization}
\label{s:homog}

In what follows, we extend $\ve$ to the whole of $\Om$ (still denoting the extension by $\ve$),
maintaining its $L^2(0,T;H^1(\Om))$-norm uniformly bounded, as it can be done following
\cite{Cioranescu:SaintJeanPaulin:1979,Tartar:1977}, in the connected/disconnected case,
and \cite{Acerbi:Chiado:DalMAso:Percivale:1992, Hopker:2016,Mabrouk:Hassan:2004}, in the connected/connected case.
We also extend $\we$ to the whole of $\Om$ (still keeping the notation $\we$), simply by taking $\we=0$ in $\Omint$.
\medskip

Our goal in this section is to describe the asymptotic behavior, as $\eps\to 0$, of the triplet $(\ve,\ue,\we)$ given
by the system \eqref{eq:PDEinc}--\eqref{eq:InitData3c}.
To this aim, we state the following compactness result.

\begin{lemma} \label{l:conv}
Suppose that $\capuno,\capdue,\dfinte,\dfoute,\dfdise,f_1,f_2,\overline v_0,\widetilde w_0\ s_{0\eps}$ satisfy
the assumptions stated in Subsection \ref{ss:position}. For every $\eps>0$, let $(\ve,\ue,\we)$ be the unique solution
of the system \eqref{eq:PDEinc}--\eqref{eq:InitData3c}, complemented with the gating problem
\eqref{eq:gating1}--\eqref{eq:gating2}. Then, up to a subsequence,
  still denoted by $\eps$, there exist  $v\in L^2(0,T;H^1_0(\Om))$, $\hat v\in L^2(\Om_T;H^1_\#(\Perout))$ with
   ${\mathcal{M}}_{\Perout} (\hat v)=0$, $u\in L^2(\Om_T)$, and $w\in L^2(\Om_T)$ such that
\begin{alignat}2
\label{conv1} &
\ve\wto v\qquad &\hbox{weakly in $L^2(0,T;H^1(\Om))$;}
\\
\label{conv1bis} &
\unfop(\ve)\wto v\qquad &\hbox{weakly in $L^2(\Om_T\times Y)$;}
\\
\label{conv2} &
\unfop(\chi_{\Omout}\nabla\ve)\wto \nabla v+\nabla_y \hat v\qquad &\hbox{weakly in $L^2(\Om_T\times\Perout)$.}
\end{alignat}
Moreover,
\begin{alignat}2
\label{conv7} &
\ve\to v\qquad &\hbox{strongly in $L^2(\Om_T)$;}
\\
\label{conv8} &
\ue\wto u\qquad &\hbox{weakly in $L^2(\Om_T)$;}
\\
\label{conv9}  &
\we\wto w\qquad &\hbox{weakly in $L^2(\Om_T)$.}
\end{alignat}
\end{lemma}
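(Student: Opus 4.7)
The plan is to combine the uniform a priori estimates \eqref{eq:energy5} with the unfolding machinery recalled in Section \ref{s:time_unfolding}, and then to upgrade the weak convergence of $\ve$ to a strong one by an Aubin--Lions type compactness argument. The main obstacle, discussed at the end, is the derivation of a uniform time-regularity bound compatible with the extension of $\ve$ to $\Om$.

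First, from \eqref{eq:energy5} and the extension procedure recalled at the opening of Section \ref{s:homog}, which preserves the $L^2(0,T;H^1_0(\Om))$-norm uniformly in $\eps$, together with the uniform $L^2(\Om_T)$-bound on $\ue$ and the bounds $0 \leq \we \leq 1$ on $\Omout_T$, $\we \equiv 0$ on $\Omint_T$, reflexivity yields, up to a subsequence, the weak convergences \eqref{conv1}, \eqref{conv8} and \eqref{conv9}, with $v \in L^2(0,T;H^1_0(\Om))$ because the extended $\ve$ has zero trace on $\partial \Om$. The convergence \eqref{conv1bis} follows from \eqref{conv1} and the standard fact that $\unfop(\ve) \wto v$ in $L^2(\Om_T \times Y)$ whenever $\ve \wto v$ in $L^2(\Om_T)$, the limit being $y$-independent. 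For \eqref{conv2}, I apply Proposition \ref{t:smalleps_grad_weak_conv} to the extended $\ve$, obtaining some $\hat V \in L^2(\Om_T; H^1_\#(Y))$ with $\saverage(\hat V) = 0$ such that $\unfop(\nabla \ve) \wto \nabla v + \nabla_y \hat V$ weakly in $L^2(\Om_T \times Y)$; multiplying by $\chi_{\Perout}$ and setting $\hat v := \hat V|_{\Perout} - \mathcal{M}_{\Perout}(\hat V|_{\Perout})$, which leaves $\nabla_y \hat V$ unaltered on $\Perout$, delivers \eqref{conv2} with the required zero-mean property.

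The delicate step is the strong convergence \eqref{conv7}. Testing \eqref{eq:PDEinc} against $\varphi \in H^1_{null}(\Omout)$ and using \eqref{eq:energy5} together with the Lipschitz estimate \eqref{eq:ion1} yields a uniform bound on $\partial_t \ve$ in $L^2(0,T; (H^1_{null}(\Omout))^\ast)$, via the divergence structure of the equation and the control of $\nabla \ue$ on $\Omout_T$. Since the extension operator $P^\eps$ is linear and time-independent, its adjoint (essentially a restriction) transports this estimate to a uniform bound on $\partial_t(P^\eps \ve)$ in a negative Sobolev space on $\Om$. Combined with the uniform $L^2(0,T; H^1_0(\Om))$ bound on $P^\eps \ve$ and the compact embedding $H^1_0(\Om) \hookrightarrow\hookrightarrow L^2(\Om)$, the Aubin--Lions lemma then yields \eqref{conv7}. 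The hardest point is precisely to make the transport of the time-derivative bound through $P^\eps$ uniform in $\eps$, owing to the $\eps$-dependent geometry of $\Omout$; an alternative, likely cleaner, route is to derive uniform time-translation estimates directly by testing the system with $\ve(\cdot,t+\tau) - \ve(\cdot,t)$ on $\Omout$, then transferring them to $\Om$ through $P^\eps$ and concluding via the Riesz--Fr\'echet--Kolmogorov criterion.
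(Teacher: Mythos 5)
Your decomposition of the lemma is the same as the paper's: the weak convergences follow from the energy estimate \eqref{eq:energy5}, the extension operators and reflexivity; \eqref{conv1bis} is the standard unfolding convergence; \eqref{conv2} comes from Proposition \ref{t:smalleps_grad_weak_conv} restricted to $\Perout$ (your renormalization of the corrector to have zero mean over $\Perout$ rather than over $Y$ is correct and is indeed needed); and the null trace of $v$ is obtained from the properties of the extension (the paper additionally invokes H\"opker's results in the connected/connected geometry, a point worth making explicit). The only genuine divergence is in the proof of \eqref{conv7}. Your primary route --- a uniform bound on $\partial_t\ve$ in $L^2(0,T;(H^1_{null}(\Omout))^\ast)$ followed by Aubin--Lions on the fixed domain --- runs into exactly the obstruction you flag: the dual space $(H^1_{null}(\Omout))^\ast$ depends on $\eps$, and there is no obvious uniformly bounded operator carrying a distributional time derivative on $\Omout$ to one on $\Om$ that is compatible with the extension $P^\eps$ (the adjoint of $P^\eps$ maps in the wrong direction), so this route is not complete as written. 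Your ``alternative, likely cleaner'' route is precisely what the paper does in Proposition \ref{p:p1}: it tests equation \eqref{eq:PDEinc} with $\testb(t)=-\int_t^{t+\Delta t}\ve(s)\di s$, uses \eqref{eq:energy5} and the Lipschitz estimate \eqref{eq:ion1} to obtain the uniform time-translation estimate
\begin{equation*}
\int_0^{T-\Delta t}\int_{\Omout}\abs{\ve(x,t+\Delta t)-\ve(x,t)}^2\di x\di t\leq\const\sqrt{\Delta t}\,,
\end{equation*}
and then concludes by a Riesz--Fr\'echet--Kolmogorov/Simon-type argument combined with the spatial gradient bounds. So your fallback is the paper's proof; the Aubin--Lions variant would require an additional (nontrivial) uniform duality argument that the time-translation method avoids entirely.
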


\begin{proof}
Assertions \eqref{conv1} and  \eqref{conv8} are direct consequence of the estimate
\eqref{eq:energy5}, while assertion \eqref{conv1bis}
follows by \cite[Theorem 2.11]{Amar:Andreucci:Gianni:Timofte:2017A}.
On the other hand, assertion \eqref{conv2} follows from Proposition \ref{t:smalleps_grad_weak_conv},
while \eqref{conv9} is a consequence of the
fact that $0\leq \we(x,t)\leq 1$ a.e. in $\Om_T$, uniformly with respect to $\eps$.
Finally, \eqref{conv7} follows from the next Proposition \ref{p:p1}.

To achieve the thesis, it remains to prove that the trace of $v$ on
$\partial\Om\times(0,T)$ is null.
As pointed out at the beginning of this section, we shall use different
extension operators in the two different geometries.
Therefore, in the connected/disconnected case, the null trace result is a direct consequence of the extension technique, while in the connected/connected one, it is due to \cite[Theorems 3.5 and 3.6]{Hopker:2016}, thanks to our geometrical assumptions.
\end{proof}

In the following proposition, we prove a strong convergence result for the difference $v_\eps$ of the intra and the extra-potentials
in the healthy zone. Such a result is not standard in homogenization theory, in particular when dealing with time-dependent function spaces.
\medskip

\begin{prop}\label{p:p1}
Under the assumptions of Lemma \ref{l:conv}, we have that
$\ve\to v$ strongly in $L^2(\Om_T)$.
\end{prop}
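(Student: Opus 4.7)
The strategy is a standard Aubin--Lions compactness argument applied to the extension $\tilde v^\eps$ of $v^\eps$. Since $\tilde v^\eps$ is uniformly bounded in $L^2(0,T;H^1(\Om))$ by construction and the embedding $H^1(\Om)\hookrightarrow L^2(\Om)$ is compact, strong convergence in $L^2(\Om_T)$ will follow once a uniform bound on $\partial_t\tilde v^\eps$ is established in a sufficiently negative Sobolev space, typically $L^2(0,T;H^{-1}(\Om))$. The difficulty is that $v^\eps$ satisfies the parabolic equation only on the $\eps$-dependent set $\Omout$, so the time-derivative control must first be obtained there and then propagated to $\Om$ through the extension.

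For the first step, I would test \eqref{eq:PDEinc} against an arbitrary $\varphi\in H^1_0(\Om)$: the restriction $\varphi|_{\Omout}$ lies in $H^1_{null}(\Omout)$, and an integration by parts combined with the homogeneous flux condition \eqref{eq:flux1c} kills the interface term on $\Memb$, giving, for a.e.\ $t\in(0,T)$,
\[
\int_{\Omout}\partial_t v^\eps\,\varphi\,\di x=-\int_{\Omout}\dfinte\nabla(v^\eps+u^\eps)\cdot\nabla\varphi\,\di x-\int_{\Omout}\Iion(v^\eps,\we)\varphi\,\di x+\int_{\Omout}f_1\varphi\,\di x.
\]
Using the uniform ellipticity \eqref{eq:matrix}, the energy estimates \eqref{eq:energy5}, and the Lipschitz bound \eqref{eq:ion1}, the right-hand side is controlled by $\const\|\varphi\|_{H^1(\Om)}$ uniformly in $\eps$. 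Hence $\chi_{\Omout}\partial_t v^\eps$, viewed by duality as an element of $L^2(0,T;H^{-1}(\Om))$, is uniformly bounded in $\eps$.

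The second step is to transfer this bound to $\partial_t\tilde v^\eps$ on the whole of $\Om$. Both extension operators recalled at the beginning of the section (the Cioranescu--Saint Jean Paulin construction in the connected/disconnected case and the construction of \cite{Hopker:2016} in the connected/connected case) are linear, purely spatial operators, uniformly bounded between the relevant Sobolev spaces and therefore commuting with $\partial_t$. Via duality and these uniform extension properties, the bound on $\chi_{\Omout}\partial_t v^\eps$ translates into $\|\partial_t\tilde v^\eps\|_{L^2(0,T;H^{-1}(\Om))}\le\const$. The Aubin--Lions lemma then yields relative compactness of $\{\tilde v^\eps\}$ in $L^2(\Om_T)$, and combined with the weak limit identified in \eqref{conv1} this gives the desired strong convergence.

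The main obstacle is precisely this transfer step: one must verify that the negative-order time regularity obtained on the perforated domain $\Omout$ actually survives through the extension to the fixed domain $\Om$. This is technically delicate and geometry dependent, since the two cases use genuinely different extension operators whose dual mapping properties on negative-order Sobolev spaces must be examined carefully for the Aubin--Lions machinery to apply.
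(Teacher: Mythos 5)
Your first step is sound: testing \eqref{eq:PDEinc} against $\varphi\in H^1_0(\Om)$ and using \eqref{eq:flux1c} to cancel the interface term on $\Memb$ does give a uniform bound on $\chi_{\Omout}\partial_t\ve$ in $L^2(0,T;H^{-1}(\Om))$. The genuine gap is the transfer step, and it is not merely ``technically delicate'': the mechanism you invoke does not go through. What you control is the time derivative of the \emph{zero-extension} $\chi_{\Omout}\ve$, whereas the uniform spatial bound in $L^2(0,T;H^1(\Om))$ holds for the \emph{operator extension} $P_\eps\ve$; the Aubin--Lions lemma requires both bounds for the \emph{same} sequence. To pass from one to the other by duality you would need the adjoint $P_\eps^*$ to map $H^1_0(\Om)$ into $H^1_{null}(\Omout)$ with $\eps$-uniform norm, and neither the operator of \cite{Cioranescu:SaintJeanPaulin:1979} nor that of \cite{Hopker:2016} has this property: the adjoint is only an $L^2$-bounded ``folding'' operator, and rescaling to the $\eps$-cells destroys any uniform control of gradients. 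This mismatch is exactly the classical obstruction to applying Aubin--Lions in perforated domains, and it is why the paper does not take this route.

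The paper instead proves a time-translation (equicontinuity) estimate directly on the perforated domain: subtracting the equation from its temporal translate and testing with $\testb(t)=-\int_t^{t+\Delta t}\ve(s)\di s$ --- a choice that requires no information on $\partial_t\ve$ beyond the weak formulation --- yields, via \eqref{eq:energy5} and \eqref{eq:ion1},
\begin{equation*}
\int_0^{T-\Delta t}\int_{\Omout}|\ve(x,t+\Delta t)-\ve(x,t)|^2\di x\di t\le\const\sqrt{\Delta t}\,.
\end{equation*}
Combined with the uniform $H^1$ bound on the extension (and with the fact that the extension operator is bounded from $L^2(\Omout)$ to $L^2(\Om)$ up to an $O(\eps)$ gradient correction, so the translation estimate survives the extension), a Fr\'echet--Kolmogorov/Simon-type argument gives the strong compactness in $L^2(\Om_T)$. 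If you wish to retain a time-derivative formulation, you would need a compactness lemma specifically adapted to perforated domains rather than the plain Aubin--Lions lemma; as written, your proof is incomplete at its central step.
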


\begin{proof}
Following the ideas in \cite[Lemma 3.10]{Grandelius:2019},
let us consider the temporal translation $\ve_{\Delta t}(t)=\ve(t+\Delta t)$ and $\ue_{\Delta t}(t)=\ue(t+\Delta t)$
of $\ve$ and $\ue$, respectively. Clearly,
$\ve_{\Delta t}$ and $\ue_{\Delta t}$ satisfy the system \eqref{eq:PDEinc}--\eqref{eq:BoundDatac}
in $(0,T-\Delta t)$, with initial conditions $\ve_{\Delta t}(0)=\ve(\Delta t)$ and $\ue_{\Delta t}(0)=\ue(\Delta t)$.
We subtract the original equations from the corresponding ones satisfied by the temporal translated functions
and set $\hve(t)=\ve(t+\Delta t)-\ve(t)$ and $\hue(t)=\ue(t+\Delta t)-\ve(t)$ (the same notation will be
adopted for all the other quantities).
Thus, taking into account only equations \eqref{eq:PDEinc} and \eqref{eq:flux1c}
and using as test function $\testb(t)=-\int_t^{t+\Delta t}\ve(s)\di s$, we  obtain
\begin{multline}\label{eq:weak2}
\int_0^{T-\Delta t}\int_{\Omout} (\hve)^2\di x\di t=
\\
\int_0^{T-\Delta t}\int_{\Omout}\dfinte\nabla \hve\cdot\left(\int_t^{t+\Delta t}\nabla\ve(s)\di s\right)\di x\di t
+\int_0^{T-\Delta t}\int_{\Omout}\dfinte\nabla \hue\cdot\left(\int_t^{t+\Delta t}\nabla\ve(s)\di s\right)
\\
+\int_{\Omout} \big(\ve(T)-\ve(T-\Delta t)\big)\left(\int_{T-\Delta t}^{T}\ve(s)\di s\right)\di x
\\
-\int_{\Omout} \big(\ve(\Delta t)-\overline v_0\big)\left(\int_0^{\Delta t}\ve(s)\di s\right)\di x
\\
+\int_0^{T-\Delta t}\int_{\Omout} \big(I_{ion}(\ve_{\Delta t},\we_{\Delta t})-I_{ion}(\ve,\we)\big)
\left( \int_t^{t+\Delta t}\ve(s)\di s\right)\di x\di t
\\
-\int_0^{T-\Delta t}\int_{\Omout}\hat f_1\left(\int_t^{t+\Delta t}\ve(s)\di s\right)\di x\di t
=\sum_{k=1}^{6}I_k\,.
\end{multline}
Clearly, \eqref{eq:weak2} shall be complemented with the gating problems for $\we(t)$ and $\we(t+\Delta t)$.

Taking into account \eqref{eq:energy5} and using H\"{o}lder inequality,
we get
\begin{equation}\label{eq:a11}
\begin{aligned}
I_1 &=\int_0^{T-\Delta t}\int_{\Omout}\dfinte\nabla \hve\cdot\left(\int_t^{t+\Delta t}\nabla\ve(s)\di s\right)\di x\di t
\\
& \leq \const\left(\int_0^{T-\Delta t}\int_{\Omout}|\nabla \ve|^2\di x\di t\right)^{1/2}
\left\Vert\int_t^{t+\Delta t}\nabla\ve(s)\di s\right\Vert_{L^2(\Omout\times(0,T-\Delta t))}
\\
& \leq \const\Vert\nabla \ve\Vert^2_{L^2(\Omout_T)}\sqrt{\Delta t}
\leq \const \sqrt{\Delta t}\,.
\end{aligned}
\end{equation}
Similar computations lead to
\begin{equation*}
\begin{aligned}
& I_2\leq \const \Vert \nabla \ue\Vert_{L^2(\Omout_T)} \Vert \nabla \ve\Vert_{L^2(\Omout_T)}\sqrt{\Delta t}
\leq \const \sqrt{\Delta t}\,,
\\
& I_3\leq\const \sqrt{\Delta t}\sup_{t\in(0,T)} \int_{\Omout}(\ve)^2(x,t)\di x\leq\const\sqrt{\Delta t}\,,
\\
& I_4\leq\const \sqrt{\Delta t}
\sup_{t\in(0,T)} \left( \int_{\Omout}(\ve)^2(x,t)\di x
+\Vert \overline v_0\Vert_{L^2(\Om)}\left(\int_{\Omout}(\ve)^2(x,t)\di x\right)^{1/2}\right)\leq\const\sqrt{\Delta t}\,,
%\\
\end{aligned}
\end{equation*}
\begin{equation*}
\begin{aligned}
& I_6\leq \const \Vert \nabla \hat f_1\Vert_{L^2(\Omout_T)}\Vert \nabla \ve\Vert_{L^2(\Omout_T)} \sqrt{\Delta t}
\leq \const \sqrt{\Delta t}\,.
\end{aligned}
\end{equation*}
Finally,
\begin{multline*}
I_5\leq \const\Vert I_{ion}(\ve_{\Delta t},\we_{\Delta t})-I_{ion}(\ve,\we)\Vert_{L^2(\Om\times(T-\Delta t))}
\Vert \ve\Vert_{L^2(\Om_T)}\sqrt{\Delta t}
\\
\leq \const \Vert \hve\Vert_{L^2(\Om\times(T-\Delta t))}\Vert \ve\Vert_{L^2(\Om_T)}\sqrt{\Delta t}
\leq \const\Vert \ve\Vert^2_{L^2(\Om_T)}\sqrt{\Delta t}\,,
\end{multline*}
where, in the second inequality, we used \eqref{eq:ion1}.
Collecting all the previous estimates, from \eqref{eq:weak2} we obtain
\begin{equation}\label{eq:a12}
\int_0^{T-\Delta t}\int_{\Omout} |\ve(x,t+\Delta t)-\ve(x,t)|^2\di x\di t\leq \const \sqrt{\Delta t}\,.
\end{equation}
Therefore, taking into account the energy estimate \eqref{eq:energy5}, by \eqref{eq:a12} we can infer that
$\ve\to v$ strongly in $L^2(\Om_T)$.
\end{proof}

\begin{lemma}\label{l:conv1}
Under the assumptions of Lemma \ref{l:conv}, we have that, up to a subsequence,
still denoted by $\eps$, there exist  $u^B\in L^2(0,T;H^1_0(\Om))$, $u^D\in L^2(\Om_T)$ and
$\hat u=(\hat u^B,\hat u^D)\in L^2(\Om_T;\X^1_\#(Y))$
with ${\mathcal{M}}_Y (\hat u)=0$, such that
\begin{equation}\label{conv4}
\unfop(\chi_{\Omout}\nabla\ue)\wto \nabla u^B+\nabla_y \hat u^B,\qquad\hbox{weakly in $L^2(\Om_T\times\Perout)$.}
\end{equation}
Moreover,
\begin{itemize}
\item in the connected/connected case, $u^D\in L^2(0,T;H^1_0(\Om))$ and
\begin{equation}\label{conv5}
\unfop(\chi_{\Omint}\nabla\ue)\wto \nabla u^D+\nabla_y \hat u^D,\qquad \hbox{weakly in $L^2(\Om_T\times\Perint)$;}
\end{equation}
\item in the
connected/disconnected case,
\begin{equation}\label{conv5bisse}
\unfop(\chi_{\Omint}\nabla\ue)\wto \nabla u^B+\nabla_y \hat u^D,\qquad \hbox{weakly in $L^2(\Om_T\times\Perint)$;}
\end{equation}
\item for $\espo>-1$ and in both geometries, we have
\begin{equation}\label{conv6}
\btsunfop([\ue])\to 0\qquad \hbox{weakly in $L^2(\Om_T\times\Permemb)$,}
\end{equation}
so that $u^B=u^D=:u\in L^2(0,T;H^1_0(\Om))$.
\end{itemize}
\end{lemma}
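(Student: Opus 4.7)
The plan is to split the argument into three independent steps: (i) the two-scale convergence of the unfolded gradients of $\ue$ in $\Omout_T$ and $\Omint_T$; (ii) the vanishing of the unfolded jump across $\Memb$; (iii) the identification $u^B=u^D$. For (i), I would simply apply Proposition \ref{p:comp AB} in the connected/connected geometry and Remark \ref{r:r2} in the connected/disconnected one, to $\{\ue\}\subset L^2(0,T;\X^1_{0\eps}(\Om))$. The uniform bound \eqref{stima comp AB} required there is a direct consequence of the second and third estimates in \eqref{eq:energy5}, combined with the Poincar\'e inequality \eqref{eq:poincare1}: the boundary term $\eps\|[\ue]\|_{L^2(\Memb_T)}^2$ is controlled by $\const\,\eps^{1+\espo}$, hence by a constant, since $\espo>-1$. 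This yields $u^B\in L^2(0,T;H^1(\Om))$ (and $u^D\in L^2(0,T;H^1(\Om))$ in the connected/connected case, $u^D\in L^2(\Om_T)$ otherwise), a corrector $\hat u=(\hat u^B,\hat u^D)\in L^2(\Om_T;\X^1_\#(Y)/\R)$, and the convergences \eqref{conv4} and \eqref{conv5} (respectively \eqref{conv5bisse}). The nullity of $u^B$ on $\partial\Om$ (and of $u^D$ in the connected/connected case) is obtained via the uniform extension operators recalled at the beginning of Section \ref{s:homog}, exactly as at the end of the proof of Lemma \ref{l:conv}.

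For (ii), I would use the $L^2$-isometry of the boundary unfolding operator, which yields
\[
\|\btsunfop([\ue])\|_{L^2(\Om_T\times\Permemb)}^2=\eps\int_{\Memb_T}[\ue]^2\di\sigma\di t+o(1),
\]
where the $o(1)$ comes from cells intersecting $\partial\Om$. Inserting the bound $\eps^{-\espo}\int_{\Memb_T}[\ue]^2\di\sigma\di t\leq\const$ from the energy inequality \eqref{eq:energy3} gives a right-hand side of order $\eps^{1+\espo}\to 0$ as $\eps\to 0$, exactly because $\espo>-1$. Hence $\btsunfop([\ue])\to 0$ strongly, and \emph{a fortiori} weakly, in $L^2(\Om_T\times\Permemb)$.

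For (iii), I would pass to the limit in $\btsunfop([\ue])=\btsunfop(\ue^B)-\btsunfop(\ue^D)$. The Gauss--Green argument used in the proof of Proposition \ref{p:comp AB} (see \eqref{eq:a2}) shows that $\btsunfop(\ue^B)\wto u^B$ and $\btsunfop(\ue^D)\wto u^D$ weakly in $L^2(\Om_T\times\Permemb)$, where $u^B$ and $u^D$ are read on the membrane as functions independent of $y$. Combined with (ii), this forces $u^B=u^D$ a.e.\ on $\Permemb$ and, by $y$-independence, a.e.\ in $\Om_T$. In particular, in the connected/disconnected case this lifts $u^D$ from $L^2(\Om_T)$ to the $L^2(0,T;H^1_0(\Om))$-regularity of $u^B$, giving the common limit $u\in L^2(0,T;H^1_0(\Om))$.

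The main technical point lies in step (iii) in the connected/disconnected geometry: there $u^D$ is a priori only an $L^2$-limit inside the isolated inclusions and carries no intrinsic trace on $\Permemb$; the argument nonetheless goes through because $u^D$ does not depend on $y$, so its ``trace'' on $\Permemb$ coincides with $u^D(x,t)$ itself, which is precisely what the unfolding convergence of the boundary values of $\ue$ from $\Omint$ produces via the Gauss--Green identity exploited in the proof of Proposition \ref{p:comp AB}.
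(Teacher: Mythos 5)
Your proposal is correct and follows essentially the same route as the paper: steps (i)--(iii) correspond exactly to the paper's use of Proposition \ref{p:comp AB} and Remark \ref{r:r2} under the bound \eqref{eq:energy5}, the extension operators together with \cite[Theorems 3.5 and 3.6]{Hopker:2016} for the null traces, the estimate $\int_{\Om_T\times\Permemb}\btsunfop([\ue]^2)\di\sigma\di x\di t\leq\eps\int_{\Memb_T}[\ue]^2\di\sigma\di t\leq\const\,\eps^{1+\espo}\to 0$, and the identification $[u]=0$ via \eqref{conv comp AB5}. The only cosmetic difference is that you re-derive the boundary convergence $\btsunfop(\ue^{B}),\btsunfop(\ue^{D})\wto u^B,u^D$ through the Gauss--Green argument rather than citing \eqref{conv comp AB5} directly, and your "$=\ldots+o(1)$" in step (ii) should really be the one-sided inequality, which is all that is needed.
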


\begin{proof}
By \eqref{eq:energy5}, it follows that Proposition \ref{p:comp AB} and Remark \ref{r:r2} hold.
Therefore, assertions \eqref{conv4}, \eqref{conv5} and \eqref{conv5bisse} are direct consequence of \eqref{conv comp AB3},
\eqref{conv comp AB4} and \eqref{eq:a13}.

Moreover, in the connected/connected case,
the traces of $u^B$ and $u^D$ are zero on $\partial\Om\times(0,T)$. Indeed, we can identify $\ue\mid_{\Omout}=\ubdue$
and $\ue\mid_{\Omint}=\ud$ (as already done for $\ve$ and
$\we$) with their extensions and then apply \cite[Theorems 3.5 and 3.6]{Hopker:2016}.
In the connected/disconnected case, we have only to identify
$\ue\mid_{\Omout}=\ubdue$ with its extension from outside, so that $\ubdue\in L^2(0,T;H^1_0(\Om))$. Moreover, by the energy estimate \eqref{eq:energy5} and the properties of the extension, we obtain also that $\ubdue \wto u^B$ weakly in $L^2(0,T;H^1_0(\Om))$.

Finally, in order to prove \eqref{conv6}, recalling that $\espo>-1$ and taking into account
\eqref{eq:energy5}, we obtain that
$$
\frac{1}{\eps^\espo}\int_{\Memb_T}[\ue]^2(t)\di\sigma\di t\leq \const\,,
$$
with $\const$ independent of $\eps$. Therefore, it follows that (see \cite[Inequality (34)]{Amar:Gianni:2016A})
$$
\int_{\Om_T\times\Permemb}\btsunfop([\ue]^2)\di\sigma\di x\di t\leq \eps \int_{\Memb_T}[\ue]^2\di\sigma\di t
\leq \const\eps^{1+\espo}\to 0\,.
$$
Hence, $ \btsunfop([\ue])\to 0$ strongly in $L^2(\Om_T\times  \Permemb)$.
Taking into account that, by \eqref{conv comp AB5}, $\btsunfop([\ue])\wto [u]$ weakly in $L^2(\Om_T\times  \Permemb)$,
we get $[u]=0$, i.e. $u\in L^2(0,T;H^1_0(\Om))$.
\end{proof}

We now have to distinguish the different scalings.

\subsection{The scaling $\espo=1$}\label{ss:scaling1}

In addition to what stated in Lemmas \ref{l:conv} and \ref{l:conv1}, we can also state the following result.

\begin{lemma}\label{l:conv2}
Under the assumptions of Lemma \ref{l:conv}, we have that, up to a subsequence, still denoted by $\eps$,
\begin{equation}
\label{conv7bis}
\btsunfop(\eps^{-1}[\ue])\wto [\hat u]\qquad \hbox{weakly in $L^2(\Om_T\times\Permemb)$.}
\end{equation}
\end{lemma}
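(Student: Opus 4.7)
The plan is to identify the weak limit via a cell-scale corrector. First, the scaling $\espo=1$ in \eqref{eq:energy5}, combined with the standard boundary-unfolding rescaling already used in the proof of Lemma \ref{l:conv1}, yields
$$\|\btsunfop([\ue])\|^2_{L^2(\Om_T\times\Permemb)}\leq\eps\int_{\Memb_T}[\ue]^2\di\sigma\di t\leq\const\,\eps^2,$$
so $\eps^{-1}\btsunfop([\ue])$ is bounded in $L^2(\Om_T\times\Permemb)$ and, up to a subsequence, converges weakly to some $Z$. The remaining task is to show that $Z=[\hat u]$.

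To this end I would introduce the cell corrector
$$V^\eps(x,t,y):=\eps^{-1}\bigl[\unfop(\ue)(x,t,y)-\average(\ue)(x,t)\bigr],$$
split in the natural way as $(V^{\eps,B},V^{\eps,D})$ on $\Perout,\Perint$. By \eqref{eq:local_s_ii} we have $\saverage(V^\eps)=0$; using the identity $\nabla_y\unfop(\cdot)=\eps\unfop(\nabla\,\cdot\,)$ we get $\nabla_y V^\eps=\unfop(\nabla\ue)$ piecewise on $\Perout,\Perint$; and its jump across $\Permemb$ is exactly $[V^\eps]=\eps^{-1}\btsunfop([\ue])$. A Poincar\'e-Wirtinger inequality on $Y$ for piecewise-$H^1$ functions with vanishing $Y$-mean and a jump on $\Permemb$,
$$\int_Y|V^\eps|^2\di y\leq\const\Bigl(\int_{\Perout}|\nabla V^{\eps,B}|^2\di y+\int_{\Perint}|\nabla V^{\eps,D}|^2\di y+\int_{\Permemb}[V^\eps]^2\di\sigma_y\Bigr),$$
proved by a standard Rellich-compactness contradiction (valid in both geometries because $\Perout$ is connected and every connected component of $\Perint$ shares a portion of its boundary with $\Permemb$), combined with the energy bound \eqref{eq:energy5} and the previous step shows, after integration in $(x,t)$, that $V^\eps$ is bounded in $L^2(\Om_T;H^1(\Perout)\oplus H^1(\Perint))$.

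Hence, along a subsequence, $V^\eps\wto V$ weakly in this space. The convergences \eqref{conv4}--\eqref{conv5bisse} (recalling $u^B=u^D=u$ by Lemma \ref{l:conv1}) identify $\nabla_y V=\nabla u+\nabla_y\hat u$ piecewise, so $V-\hat u-y\cdot\nabla u$ depends only on $(x,t)$; the zero-mean constraints on $V$ and on $\hat u$ then force $V=y^c\cdot\nabla u+\hat u$. By continuity of the trace operator on each side, $[V^\eps]\wto[V]$ weakly in $L^2(\Om_T\times\Permemb)$, and since $y^c\cdot\nabla u$ does not jump across $\Permemb$, we obtain $[V]=[\hat u]$, giving $Z=[\hat u]$.

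The main obstacle is to establish, uniformly in both geometries, the mean-zero Poincar\'e-Wirtinger inequality on $Y$ with the jump term on $\Permemb$: the contradiction argument works because any limit with zero gradient on each piece and zero jump across $\Permemb$ is forced to be globally constant (using the connectedness of $\Perout$ and the fact that each component of $\Perint$ is in contact with $\Perout$ through $\Permemb$), and the mean-zero condition then makes it vanish, contradicting the $L^2$-normalization.
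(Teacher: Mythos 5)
Your overall strategy is sound and genuinely different from the paper's: the paper disposes of this lemma by invoking \cite[Theorem 3]{Nguyen:2015} and \cite[Theorem 4.3 and Remark 4.4]{Donato:Nguyen:2015} and then \emph{redefining} the correctors as $\hat u^B=\hat u^1+\xi_\Gamma-m$, $\hat u^D=\hat u^2-m$, whereas you reconstruct the relevant object by hand through $V^\eps=\eps^{-1}\left[\unfop(\ue)-\average(\ue)\right]$. Your preliminary steps are correct: the bound $\norm{\btsunfop([\ue])}^2_{L^2(\Om_T\times\Permemb)}\leq\eps\int_{\Memb_T}[\ue]^2\di\sigma\di t\leq\const\,\eps^2$ does follow from \eqref{eq:energy5} with $\espo=1$; the identities $\saverage(V^\eps)=0$, $\nabla_y V^\eps=\unfop(\nabla\ue)$ and $[V^\eps]=\btsunfop(\eps^{-1}[\ue])$ are right; and the piecewise Poincar\'{e}--Wirtinger inequality on $Y$ with the jump term is provable by the compactness argument you sketch, in both geometries.

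The gap is in the identification step. From $\nabla_y V=\nabla u+\nabla_y\hat u$ piecewise you may only conclude that $V-\hat u-y^c\cdot\nabla u$ is \emph{locally} constant in $y$: one function $c^B(x,t)$ on $\Perout$ and one function per connected component of $\Perint$. The normalizations $\saverage(V)=0$ and $\mathcal{M}_Y(\hat u)=0$ yield a single scalar relation among these constants, which cannot force all of them to vanish, so they do not give $V=y^c\cdot\nabla u+\hat u$; consequently $[V]=[\hat u]+c^B-c^D$ with an undetermined difference. This is not a technicality: Lemma \ref{l:conv1} determines $\hat u^B$ and $\hat u^D$ only up to additive functions of $(x,t)$ subject to the single condition $\mathcal{M}_Y(\hat u)=0$, so $[\hat u]$ is not even well defined by Lemma \ref{l:conv1} alone, and the asserted convergence is false for a generic choice of representatives. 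The correct (and short) repair is to turn your identification around: \emph{define} $\hat u^B:=V\mid_{\Perout}-y^c\cdot\nabla u$ and $\hat u^D:=V\mid_{\Perint}-y^c\cdot\nabla u$. These have the gradients required by \eqref{conv4}--\eqref{conv5bisse}, satisfy $\mathcal{M}_Y(\hat u)=\saverage(V)-\saverage(y^c\cdot\nabla u)=0$, and give $[V]=[\hat u]$ because $y^c\cdot\nabla u$ has no jump across $\Permemb$. This redefinition of the correctors is exactly what the paper's proof performs explicitly; your write-up needs to make it explicit as well, rather than deducing the equality from the mean-zero normalization.
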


\begin{proof}
Assertion \eqref{conv7bis} is a consequence of the result in \cite[Theorem 3]{Nguyen:2015}
and \cite[Theorem 4.3 and Remark 4.4]{Donato:Nguyen:2015}, once we redefine
$$
\hat u^B=\hat u^1+\xi_{\Gamma}-m\,,
\qquad
\hat u^D=\hat u^2-m\,,
$$
where $\hat u^1\in L^2(\Om_T;H^1_\#(\Perout))$, $\hat u^2\in L^2(\Om_T;H^1_\#(\Perint))$,
$\xi_\Gamma, m\in L^2(\Om_T)$, with $\hat u^1, \hat u^2,\xi_\Gamma$ given in \cite[Theorem 3]{Nguyen:2015} and
$m=\mathcal{M}_{\Perout}(\hat u^1) +|\Perout|\xi_\Gamma +\mathcal{M}_{\Perint}(\hat u^2)$.
\end{proof}

\begin{thm}\label{t:hom1}
Let $\capuno,\capdue,\dfinte,\dfoute,\dfdise,f_1,f_2,\overline v_0,\win$ and $s_{0\eps}$ be as
in Subsection \ref{ss:position}.
Assume also that $\btsunfop(\eps^{-1}s_{0\eps})\wto s_1$ weakly in $L^2(\Om\times\Permemb)$.
For every $\eps>0$, let $(\ve,\ue,\we)$ be the unique solution
of the system \eqref{eq:PDEinc}--\eqref{eq:InitData3c}, complemented with the gating problem
\eqref{eq:gating1}-\eqref{eq:gating2}. Then,
there exist  $v,u\in L^2(0,T;H^1_0(\Om))$, $\hat v\in L^2(\Om_T;H^1_\#(\Perout))$ with
   ${\mathcal{M}}_{\Perout} (\hat v)=0$, $\hat u\in L^2(\Om_T;\X^1_\#(Y))$ with ${\mathcal{M}}_Y (\hat u)=0$,
  and $w\in L^2(\Om_T)$, such that $\ve\wto v$, $\ue \wto u$, $\we\wto w$ in the sense of Lemmas \ref{l:conv}, \ref{l:conv1}
  and \ref{l:conv2}.
  Moreover, $v,\hat v,u,\hat u,w$ are the unique solutions of the two-scale homogenized system given by
\begin{alignat}2
\nonumber &
|\Perout|v_t-\Div\left(\int_{\Perout}\dfint(\nabla (v+u)+\nabla_y(\hat v+\hat u^B))\di y\right)&
\\
\label{eq:PDEhom1} & \qquad\qquad\qquad\qquad\qquad
+|\Perout|I_{ion}(v,w)= |\Perout| f_1,\ &\text{in $\Om_T$;}
\\
\nonumber &
-\Div\!\left(\int_{\Perout}\big\{(\dfint+\dfout)(\nabla u\!+\!\nabla_y\hat u^B)+\dfint(\nabla v\!+\!\nabla_y\hat v)\!\big\}\di y\right)
&
\\
\label{eq:PDEhom2}& \qquad\qquad
-\!\Div\left(\int_{\Perint} \dfdis(\nabla u\!+\!\nabla \hat u^D)\di y\right)= |\Perout|(f_1-f_2),\ &\text{in $\Om_T$;}
\\
\end{alignat}
\begin{alignat}2
\label{eq:PDEmicrohom1} &
-\Div_y(\dfint\nabla (v+u)+\dfint\nabla_y(\hat v+\hat u^B))=0,\qquad
&\text{in $\Om_T\times \Perout$;}
\\
\label{eq:PDEmicrohom2} &
-\Div_y(\dfout(\nabla u+\nabla_y\hat u^B))=0,\
&\text{in $\Om_T\times \Perout$;}
\\
\label{eq:PDEmicrohom3} &
-\Div_y(\dfdis(\nabla u+\nabla_y\hat u^D))=0,\
&\text{in $\Om_T\times \Perint$;}
\\
\label{eq:Fluxhom1} &
\dfint\nabla (v+u)+\dfint\nabla_y(\hat v+\hat u^B)\cdot\nu =0,\
&\text{on $\Om_T\times \Permemb$;}
\\
\label{eq:Fluxhom2} &
[\dfboth(\nabla u+\nabla_y\hat u)\cdot\nu] =0,\
&\text{on $\Om_T\times \Permemb$;}
\\
\label{eq:Circuithom1} &
\capuno[\hat u]_t+\capdue[\hat u]=\dfout(\nabla u+\nabla_y\hat u^B)\cdot\nu ,\
&\text{on $\Om_T\times \Permemb$;}
\\
\label{eq:InitDatahom1} &
v(x,0)=\overline v_0,&\text{in $\Om$;}
\\
\label{eq:InitDatahom2} &
[\hat u](x,y,0)=s_1,&\text{in $\Om\times \Permemb$;}
\\
\label{eq:BoundDatahom} &
v,u=0,&\text{on $\partial\Om$,}
\end{alignat}
and
\begin{alignat}2
\label{eq:gating3} &
\partial_t w+g(v,w)=0\,,\qquad &\text{in $\Om_T$;}
\\
\label{eq:gating4} &
w(x,0)=\win(x)\,,\qquad &\text{in $\Om$.}
\end{alignat}
Here, $\dfboth$ is the matrix which coincides with $\dfout$ in $\Perout$ and with $\dfdis$ in $\Perint$.
\end{thm}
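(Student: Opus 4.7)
The plan is to pass to the two-scale limit in the weak formulation \eqref{eq:weak5} by using the unfolding operator together with carefully chosen oscillating test functions, leveraging the compactness results from Lemmas \ref{l:conv}, \ref{l:conv1} and \ref{l:conv2}, and then separately passing to the limit in the gating problem \eqref{eq:gating1}--\eqref{eq:gating2}.

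First, I would apply the spatial unfolding operator $\unfop$ to each volume integral appearing in \eqref{eq:weak5} and the boundary operator $\btsunfop$ to the membrane integrals, using the product rule \eqref{eq:unfop_product} and the fact that $\unfop$ sends $L^2$-integrals on $\Om_T$ essentially to $L^2$-integrals on $\Om_T\times Y$. As test functions, I would pick $\testb = \fun(x,t) + \eps\fun_1(x,t,x/\eps)$ with $\fun\in\mathcal{C}^1_c([0,T);\mathcal{C}^\infty_c(\Om))$ and $\fun_1\in\mathcal{C}^1_c(\Om_T;\mathcal{C}^\infty_\#(Y))$, and analogous splittings for $\testduno,\testddue$; on the membrane I would prescribe the jump $[\testd]=\eps\psi(x,t,x/\eps)$ with $\psi\in\mathcal{C}^1_c(\Om_T;\mathcal{C}^\infty_\#(\Permemb))$, so that the factor $\eps^{-1}$ in \eqref{eq:Circuitc} (recall $\espo=1$) is absorbed and the corrector $[\hat u]$ from \eqref{conv7bis} appears naturally in the limit.

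The key convergences to invoke are \eqref{conv1bis}--\eqref{conv2} for $\ve$, \eqref{conv4}--\eqref{conv5} for $\ue$, and \eqref{conv7bis} for the rescaled jump. Passage to the limit in the nonlinear term $\int_{\Omout_T} I_{ion}(\ve,\we)\testb\,\di x\di t$ relies on the strong convergence $\ve\to v$ in $L^2(\Om_T)$ from Proposition \ref{p:p1}, combined with the uniform Lipschitz bound \eqref{eq:ion1}; this lets me identify the limit of $I_{ion}(\ve,\we)$ with $I_{ion}(v,w)$. For the gating equation I would also use \eqref{conv7} to pass to the limit in $g(\ve,\we)$, deducing \eqref{eq:gating3}--\eqref{eq:gating4} from \eqref{eq:gating5}. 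Choosing first $\fun_1=0$, $\psi=0$ yields the macroscopic equations \eqref{eq:PDEhom1}--\eqref{eq:PDEhom2}, while taking $\fun=0$ and varying $\fun_1,\psi$ gives the cell problems \eqref{eq:PDEmicrohom1}--\eqref{eq:PDEmicrohom3} together with the interface conditions \eqref{eq:Fluxhom1}--\eqref{eq:Circuithom1}. The initial conditions \eqref{eq:InitDatahom1}, \eqref{eq:InitDatahom2} follow by integrating by parts in time the terms with $\partial_t\testb$ and $\partial_t[\testd]$, and using the hypothesis $\btsunfop(\eps^{-1}s_{0\eps})\wto s_1$ together with $\ve(0)=\overline v_0$.

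The main obstacle I expect is the rigorous passage to the limit in the membrane term $\eps^{-1}\capuno\int_{\Memb_T}[\ue]\partial_t[\testd]\di\sigma\di t$: one must combine the boundary unfolding with the rescaling argument of Lemma \ref{l:conv2}, keeping track of the regularity in time of $[\hat u]$ so that the time derivative in \eqref{eq:Circuithom1} makes sense. A secondary technical point is to establish \emph{uniqueness} of $(v,u,\hat v,\hat u,w)$ for the two-scale system, which is needed to conclude that the whole sequence (not merely a subsequence) converges; this I would obtain by subtracting two solutions, using the same energy estimate of \cite[Theorem 3.6]{Amar:Andreucci:Timofte:2019A} adapted to the two-scale setting, exploiting the Lipschitz property \eqref{eq:ion1} together with the Lipschitz dependence of $w$ on $v$ coming from \eqref{eq:gating3bis}, and closing with a Gronwall argument analogous to the one yielding \eqref{eq:energy3}.
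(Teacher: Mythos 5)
Your proposal matches the paper's proof in all essentials: the same two-scale test functions $\phi+\eps\psi$ with the jump carried entirely by the $\eps$-scaled oscillating part so that the $\eps^{-1}$ factor in the membrane terms is exactly absorbed and Lemma \ref{l:conv2} identifies the limit $[\hat u]$, the same use of Lemmas \ref{l:conv} and \ref{l:conv1} together with the strong convergence of $v^\eps$ and the Lipschitz bound \eqref{eq:ion1} to pass to the limit in $I_{ion}$ and in the gating equation, the same localization scheme (switching off subsets of the test functions) to extract the macroscopic equations, the cell problems and the interface conditions, and the same linearity-plus-Lipschitz argument for uniqueness of the two-scale system. I see no gap.
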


\begin{proof}
In the weak formulation \eqref{eq:weak5}, let us take, as test functions,
\begin{equation*}
\testb=\phi_B(x,t)+\eps\psi_B(x,t,x/\eps)
\qquad\hbox{and}\qquad
\testd=\phi_D(x,t)+\eps\psi_D(x,t,x/\eps)\,,
\end{equation*}
where $\phi_B,\phi_D\in \CC^1(\overline\Om_T)$, with compact support in $\Om$, for every $t\in[0,T]$,
and such that $\phi_B(x,T)=\phi_D(x,T)=0$, for every $x\in\overline\Om$, $\psi_B\in \CC^1(\overline\Om_T;\CC^1_\#(\overline\Perout))$,
with compact support in $\Om$, for every $(t,y)\in[0,T]\times\overline Y$, and such that $\psi_B(x,T,y)=0$, for every $(x,y)\in\overline\Om\times
\overline Y$, $\psi_D\in \CC^1(\Om_T;\X^1_\#(Y))$, with compact support in $\overline\Om$, for every $(t,y)\in[0,T]\times\overline Y$,
and such that $[\psi_D(x,T,y)]=0$, for every $(x,y)\in\overline\Om\times \overline Y$.
Then, we obtain
\begin{multline*}%\label{eq:weak1}
-\int_{\Omout_T} \ve(\partial_t\phi_B+\eps\partial_t\psi_B)\di x\di t+\int_{\Omout_T}\dfinte\nabla \ve\cdot(\nabla \phi_B+\eps\nabla_x\psi_B+\nabla_y\psi_B)\di x\di t
\\
+\int_{\Omout_T}\dfinte\nabla \ue\cdot(\nabla \phi_B+\eps\nabla_x\psi_B+\nabla_y\psi_B)\di x\di t
+\int_{\Omout_T} I_{ion}(\ve,\we)(\phi_B+\eps\psi_B)\di x\di t
\\
+\int_{\Omout_T}(\dfinte+\dfoute)\nabla \ue\cdot (\nabla \phi_D+\eps\nabla_x\psi_D^1+\nabla_y\psi_D^1)\di x\di t
\\
+\!\!\int_{\Omout_T}\!\dfinte\nabla \ve\cdot(\nabla \phi_D+\eps\nabla_x\psi_D^1+\nabla_y\psi_D^1)\di x\di t
+\!\!\int_{\Omint_T}\!\dfdise\nabla  \ue\cdot (\nabla \phi_D+\eps\nabla_x\psi_D^2+\nabla_y\psi_D^2)\di x\di t
\\
-{\capuno}{\eps}\int_{\Memb_T}\frac{[\ue]}{\eps}\partial_t[\psi_D]\di\sigma\di t
+{\capdue}\eps\int_{\Memb_T}\frac{[\ue]}{\eps}[\psi_D]\di\sigma\di t
%\\
\end{multline*}
\begin{multline}\label{eq:weak1}
=\int_{\Omout_T}f_1(\phi_B+\eps\psi_B)\di x\di t+\int_{\Omout_T}(f_1-f_2)(\phi_D+\eps\psi_D^1)\di x\di t
\\
+\int_{\Omout} \overline v_0(\phi_B(0)+\eps\psi_B(0))\di x
+{\capuno\eps}\int_{\Memb}\frac{s_{0\eps}}{\eps}[\psi_D](0)\di\sigma\,.
\end{multline}
Unfolding and passing to the limit, we arrive at
\begin{multline}\label{eq:a9}
-|\Perout|\int_{\Om_T}v \partial_t\phi_B\di x\di t+\int_{\Om_T}\int_{\Perout}\dfint(\nabla v+\nabla_y\hat v)\cdot
(\nabla\phi_B+\nabla_y\psi_B)\di y\di x\di t
\\
+\int_{\Om_T}\int_{\Perout}\dfint(\nabla u+\nabla_y\hat u^B)\cdot(\nabla\phi_B+\nabla_y\psi_B)\di y \di x\di t
+|\Perout|\int_{\Om_T}I_{ion}(v,w)\phi_B\di x\di t
\\
+\int_{\Om_T}\int_{\Perout}(\dfint+\dfout)(\nabla u+\nabla_y\hat u^B)\cdot(\nabla\phi_D+\nabla_y\psi_D^1)\di y\di x\di t
\\
+\int_{\Om_T}\int_{\Perout}\dfint(\nabla v+\nabla_y\hat v)\cdot(\nabla\phi_D+\nabla_y\psi_D^1)\di y\di x\di t
\\
+\int_{\Om_T}\int_{\Perint}\dfdis(\nabla u+\nabla_y\hat u^D)\cdot(\nabla \phi_D+\nabla_y\psi_D^2)\di y\di x \di t
\\
-\capuno\int_{\Om_T}\int_{\Permemb}[\hat u]\partial_t[\psi_D]\di\sigma\di x \di t
+\capdue\int_{\Om_T}\int_{\Permemb}[\hat u][\psi_D]\di\sigma\di x\di t
\\
= |\Perout|\int_{\Om_T}f_1\phi_B\di x\di t+|\Perout|\int_{\Om_T}(f_1-f_2)\phi_D\di x\di t
\\
+|\Perout|\int_{\Om}\overline v_0\phi_B(0)\di x+\capuno\int_\Om\int_\Permemb s_1[\psi_D](0)\di\sigma\di x\,,
\end{multline}
where we have used Lemmas \ref{l:conv}, \ref{l:conv1} and \ref{l:conv2}.
In order to get the strong formulation \eqref{eq:PDEhom1}--\eqref{eq:BoundDatahom}, we localize \eqref{eq:a9}, taking first
$\psi_B=\phi_D=\psi_D^1=\psi_D^2=0$ and then $\phi_B=\psi_B=\psi_D^1=\psi_D^2=0$, so that we arrive at \eqref{eq:PDEhom1},
\eqref{eq:PDEhom2} and \eqref{eq:InitDatahom1}. Moreover, we take $\phi_B=\phi_D=\psi_D^1=\psi_D^2=0$, which gives
\eqref{eq:PDEmicrohom1} and \eqref{eq:Fluxhom1}.
In the next step, we take first $\phi_B=\phi_D=\psi_B=\psi_D^2=0$ and $\psi_D^1$ with compact support in $\Perout$ and
then $\phi_B=\phi_D=\psi_B=\psi_D^1=0$ and $\psi_D^2$ with compact support in $\Perint$, in order to obtain
\begin{equation}\label{eq:PDEmicrohom21}
-\Div_y((\dfint+\dfout)(\nabla u+\nabla_y\hat u^B))-\Div_y(\dfint(\nabla v+\nabla\hat v))=0,\qquad
\text{in $\Om_T\times \Perout$;}
\end{equation}
and \eqref{eq:PDEmicrohom3}, respectively. Clearly, subtracting \eqref{eq:PDEmicrohom1} from \eqref{eq:PDEmicrohom21}, we get also \eqref{eq:PDEmicrohom2}.
In the last step, we let $\phi_B=\phi_D=\psi_B=0$ and we take advantage of the equations previously found, obtaining
\begin{multline*}
\capuno\int_\Om\int_\Permemb s_1[\psi_D](0)\di\sigma\di x=
-\int_{\Om_T}\int_{\Permemb}(\dfint+\dfout)(\nabla u+\nabla_y\hat u^B)\cdot \nu\psi_D^1\di \sigma\di x\di t
\\
-\int_{\Om_T}\int_{\Permemb}\dfint(\nabla v+\nabla_y\hat v)\cdot\nu\psi_D^1\di \sigma\di x\di t
+\int_{\Om_T}\int_{\Permemb}\dfdis(\nabla u+\nabla_y\hat u^D)\cdot\nu\psi_D^2\di \sigma\di x \di t
\\
-\capuno\int_{\Om_T}\int_{\Permemb}[\hat u]\partial_t[\psi_D]\di\sigma\di x \di t
+\capdue\int_{\Om_T}\int_{\Permemb}[\hat u][\psi_D]\di\sigma\di x\di t=
\\
-\int_{\Om_T}\int_{\Permemb}\dfout(\nabla u+\nabla_y\hat u^B)\cdot \nu\psi_D^1\di \sigma\di x\di t
+\int_{\Om_T}\int_{\Permemb}\dfdis(\nabla u+\nabla_y\hat u^D)\cdot\nu\psi_D^2\di \sigma\di x \di t
\\
-\capuno\int_{\Om_T}\int_{\Permemb}[\hat u]\partial_t[\psi_D]\di\sigma\di x \di t
+\capdue\int_{\Om_t}\int_{\Permemb}[\hat u][\psi_D]\di\sigma\di x\di t=
\\
-\int_{\Om_T}\int_{\Permemb}[\dfboth(\nabla u+\nabla_y\hat u)\cdot \nu]\psi_D^2\di \sigma\di x\di t
-\int_{\Om_T}\int_{\Permemb}\dfout(\nabla u+\nabla_y\hat u^B)\cdot\nu[\psi_D]\di \sigma\di x \di t
\\
-\capuno\int_{\Om_T}\int_{\Permemb}[\hat u]\partial_t[\psi_D]\di\sigma\di x \di t
+\capdue\int_{\Om_T}\int_{\Permemb}[\hat u][\psi_D]\di\sigma\di x\di t\,,
\end{multline*}
where, in the second equality, we have taken into account \eqref{eq:Fluxhom1}. Therefore, if we take $[\psi]=0$,
it follows \eqref{eq:Fluxhom2}, while, when $[\psi]\not=0$, we get \eqref{eq:Circuithom1} and \eqref{eq:InitDatahom2}.
The boundary condition \eqref{eq:BoundDatahom} is a direct consequence of the fact that $v, u\in L^2(0,T;H^1_0(\Om))$.

Finally, the limit gating problem \eqref{eq:gating3}--\eqref{eq:gating4} follows from
\eqref{conv7}, \eqref{conv9}  and \eqref{eq:gating3bis}, once we pass to the limit in \eqref{eq:gating5}, written
for $p^\eps=\ubuno-\ubdue$, similarly as done in \cite[Proposition 4.7]{Collin:Imperiale:2018} and \cite[Section 5.3]{CDP2} (see, also,
\cite[Lemma 2.5]{Graf:Peter:2014}).

In order to conclude the proof, it remains to guarantee that the two-scale homogenized system \eqref{eq:PDEhom1}--\eqref{eq:BoundDatahom}
admits a unique solution, but this is a direct consequence of the linearity of the system jointly with the Lipschitz continuity of $I_{ion}$.
Therefore, the whole sequence, and not only a subsequence, converges.
\end{proof}

\begin{thm}\label{t:t1}
The two-scale system \eqref{eq:PDEhom1}--\eqref{eq:BoundDatahom} can be rewritten as the single-scale degenerate parabolic system
given by
\begin{equation}\label{eq:a15}
\begin{aligned}
\partial_t v-\Div\left(\Ahomuno\nabla (v+u)\right)+I_{ion}(v,w)=f_1\,,\qquad & \hbox{in $\Om_T$;}
\\
-\Div\left(\Ahomuno\nabla (v+u)\right)-\Div\left(\Ahomdue\nabla u+\int_0^t \Ahomtre(t-\tau)\nabla u(\tau)\di \tau\right)
&
\\
=\mathcal{F}+(f_1-f_2)\,,\qquad & \hbox{in $\Om_T$,}
\end{aligned}
\end{equation}
complemented with the initial and the boundary conditions \eqref{eq:InitDatahom1}, \eqref{eq:BoundDatahom} and the gating problem
\eqref{eq:gating3}--\eqref{eq:gating4}. Here, the matrices $\Ahomuno,\,\Ahomdue,\Ahomtre$ are defined in \eqref{eq:matrix2}
and $\mathcal{F}$ is defined in \eqref{eq:a16}. Moreover, the matrices $\Ahomuno$ and $\Ahomdue$ are symmetric and positive definite
and $\Ahomtre$ is symmetric.
\end{thm}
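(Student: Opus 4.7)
The plan is to solve explicitly the cell system \eqref{eq:PDEmicrohom1}--\eqref{eq:Circuithom1} with \eqref{eq:InitDatahom2} for $(\hat v,\hat u)$ in terms of the macroscopic data $\nabla(v+u)$, $\nabla u$, and the initial interface datum $s_1$, and then to substitute back into \eqref{eq:PDEhom1}--\eqref{eq:PDEhom2}. First, \eqref{eq:PDEmicrohom1} and \eqref{eq:Fluxhom1} show that the sum $\hat v+\hat u^B$ solves a classical Neumann--periodic elliptic cell problem on $\Perout$ driven by $\nabla(v+u)$, hence
\[
\hat v+\hat u^B = \sum_{j=1}^{N} \partial_{x_j}(v+u)(x,t)\,\chi_1^j(y),
\]
for the standard correctors $\chi_1^j\in H^1_\#(\Perout)/\R$. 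This identifies
\[
\frac{1}{|\Perout|}\int_{\Perout}\dfint\bigl(\nabla(v+u)+\nabla_y(\hat v+\hat u^B)\bigr)\di y = \Ahomuno\nabla(v+u),
\]
with $\Ahomuno$ symmetric and positive definite by the usual variational argument.

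Next, the system \eqref{eq:PDEmicrohom2}--\eqref{eq:Circuithom1} together with \eqref{eq:InitDatahom2} governs $\hat u=(\hat u^B,\hat u^D)$: it is elliptic in $y$ with a dynamic Robin-type condition on $\Permemb$ and a prescribed initial jump. Linearity in $(\nabla u,s_1)$ and autonomy in $t$ yield a memory representation
\[
\hat u(x,t,y) = \sum_{j=1}^{N}\Bigl\{Z_0^j(y)\,\partial_{x_j}u(x,t) + \int_0^t K^j(t-\tau,y)\,\partial_{x_j}u(x,\tau)\di\tau\Bigr\} + \Phi(x,t,y),
\]
where $Z_0^j$ is the stationary response to the constant datum $e_j$, $K^j$ is the time-derivative of the transient part associated with $e_j$, and $\Phi$ is the relaxation of the initial jump $s_1$. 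To justify this representation I apply the Laplace transform in $t$: for each $s$ with $\Re s\ge0$, the problem reduces to an elliptic cell problem with Robin coefficient $\capuno s+\capdue$, uniformly coercive thanks to $\capdue>0$. The $s$-dependent solution is then an analytic family whose inversion produces exponentially decaying kernels $K^j$ and $\Phi$.

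Substituting the decompositions of the two previous steps into \eqref{eq:PDEhom2}, and using the identity $\int_{\Perout}\dfint(\nabla v+\nabla_y\hat v)\di y = |\Perout|\Ahomuno\nabla(v+u) - \int_{\Perout}\dfint(\nabla u+\nabla_y\hat u^B)\di y$ extracted from Step~1, one sees that the divergence term of \eqref{eq:PDEhom2} becomes
\[
-\Div\bigl(\Ahomuno\nabla(v+u)\bigr)-\Div\Bigl(\Ahomdue\nabla u+\int_0^t \Ahomtre(t-\tau)\nabla u(\tau)\di\tau\Bigr)-\mathcal{F},
\]
where $\Ahomdue,\Ahomtre(t)$ and $\mathcal{F}$ are built from $Z_0$, $K(t)$ and $\Phi$ in the usual $y$-integral form over $\Perout$ and $\Perint$. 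Dividing \eqref{eq:PDEhom1} and the modified \eqref{eq:PDEhom2} by $|\Perout|$ then yields \eqref{eq:a15}. Symmetry and positive definiteness of $\Ahomuno$ and $\Ahomdue$ follow from the variational structure of the cell problems and the symmetry of $\dfint,\dfout,\dfdis$; symmetry of $\Ahomtre(t)$ is inherited, for each $s$, from the symmetry of the bilinear form associated with the $s$-parametrised cell problem, and is preserved by inverse Laplace transformation.

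The main obstacle I anticipate is Step~2: rigorously constructing $Z_0^j$, $K^j$ and $\Phi$ with the regularity and decay required to make the convolution formula meaningful. This demands either a careful semigroup analysis of the cell problem with the dynamic interface condition, or a Laplace-transform argument with uniform bounds in $s$, together with a verification that the memory kernel belongs to a sufficiently strong Lebesgue class in time. Once these correctors are in hand, the subsequent algebraic identification of $\Ahomdue$, $\Ahomtre$ and $\mathcal{F}$ is routine.
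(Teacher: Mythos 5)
Your overall strategy coincides with the paper's: factorize $\hat v+\hat u^B$ through the stationary corrector of a Neumann--periodic cell problem on $\Perout$ driven by $\nabla(v+u)$, represent $\hat u$ as a stationary corrector contracted with $\nabla u$ plus a convolution memory term plus the relaxation of the initial jump $s_1$, and substitute into \eqref{eq:PDEhom1}--\eqref{eq:PDEhom2}; this is exactly the decomposition \eqref{eq:factor1}, \eqref{eq:factor2} with the cell problems \eqref{eq:cell1}, \eqref{eq:a10}, \eqref{eq:cell3}, \eqref{eq:a14}, and your algebraic reduction to \eqref{eq:a15} after dividing by $|\Perout|$ is the same as the paper's. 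Where you differ is in the justification of the evolutionary cell problems: you propose a Laplace-transform analysis (elliptic $s$-problems with Robin coefficient $\capuno s+\capdue$, analytic inversion, exponentially decaying kernels), whereas the paper simply invokes existing well-posedness results for elliptic problems with dynamic interface conditions and needs no decay or pointwise-in-time regularity of the kernel beyond $\chi_1^j\in\X^1_\#(Y)$ for a.e.\ $t$. Your route is viable but heavier than necessary, and you correctly identify it as the place where real work remains.

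The one genuine thin spot is the symmetry of $\Ahomtre(t)$. It does not follow merely from ``the symmetry of the bilinear form of the $s$-parametrised cell problem'', because $\Ahomtre(t)=-\tfrac{1}{|\Perout|}\int_Y\dfboth\nabla_y\chi_1(y,t)\di y$ and the kernels $\chi_1^j$ carry inhomogeneous initial data $\capuno[\chi_1^j](0)=\dfout\nabla_y(\chi_0^j-y^j)\cdot\nu$ coupling them to the stationary correctors $\chi_0^j$; the entries $\Ahomtre_{hj}(t)$ and $\Ahomtre_{jh}(t)$ are thus built from two different solutions with two different initial data, and a reciprocity identity between them must be proved. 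The paper does this by testing the stationary cell equation for $\chi_0^h$ against $\chi_1^j$ (and vice versa) to convert $\Ahomtre_{hj}(t)$ into the interface integral $-\tfrac{\capuno}{|\Perout|}\int_{\Permemb}[\chi_1^j](t)[\chi_1^h](0)\di\sigma$, and then invoking the reciprocity relation \eqref{eq:a32} together with \eqref{eq:a33}. If you work in the Laplace domain you still have to reproduce this computation for the transformed problems, i.e., exhibit $\widehat{\Ahomtre}_{hj}(s)$ as an expression manifestly symmetric in $(j,h)$ using the specific form of the interface data; that is the actual content of the symmetry claim and should be written out. The rest of your outline (identification of $\Ahomdue$ and $\mathcal F$, positive definiteness of $\Ahomuno$ and $\Ahomdue$) matches the paper and is fine.
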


\begin{proof}
Taking into account \eqref{eq:PDEmicrohom1} and \eqref{eq:Fluxhom1}, we can factorize
\begin{equation}\label{eq:factor1}
(\hat v+\hat u^B)(x,y,t)=-\zeta(y)\cdot\nabla(v+u)(x,t)\,,
\end{equation}
where the cell functions $\zeta=(\zeta^1,\dots,\zeta^N)$, with $\zeta^j\in H^1_{\#}(\Perout)$ and $\mathcal{M}_{\Perout}(\zeta^j)=0$,
are the solutions of the cell problem
\begin{equation}\label{eq:cell1}
\begin{aligned}
-\Div_y(\dfint\nabla_y(y^j-\zeta^j))=0\,,\qquad & \hbox{in $\Perout$;}
\\
\dfint\nabla_y(y^j-\zeta^j)\cdot\nu=0\,,\qquad & \hbox{on $\Permemb$.}
\end{aligned}
\end{equation}
Moreover, following \cite[Section 3]{Amar:Andreucci:Bisegna:Gianni:2004a} and taking into account \eqref{eq:PDEmicrohom2}--\eqref{eq:PDEmicrohom3}, \eqref{eq:Fluxhom2}--\eqref{eq:Circuithom1}
and \eqref{eq:InitDatahom2}, we can factorize
\begin{equation}\label{eq:factor2}
\hat u(x,t,y)=-\chi_0(y)\cdot \nabla u(x,t)-\int_0^t \chi_1(y,t-\tau)\cdot\nabla u(x,\tau)\di\tau+
{\mathcal T}(s_1(x,\cdot))(t,y)\,,
\end{equation}
where we need two families of cell functions $\chi_0=(\chi_0^1,\dots,\chi_0^N)$, with $\chi^j_0\in H^1_{\#}(Y)$ and
$\mathcal{M}_{Y}(\chi_0^j)=0$, and $(\chi_1^1,\dots,\chi_1^N)$, with $\chi_1^j\in \X^1_{\#}(Y)$ and $\mathcal{M}_{Y}(\chi_1^j)=0$.
More precisely, for $j=1,\dots, N$, $\chi_0^j$ satisfies the cell problem
\begin{equation}\label{eq:cell2}
\begin{aligned}
-\Div_y(\dfout\nabla_y(y^j-\chi_0^j))=0\,,\qquad & \hbox{in $\Perout$;}
\\
-\Div_y(\dfdis\nabla_y(y^j-\chi_0^j))=0\,,\qquad & \hbox{in $\Perint$;}
\\
[\dfboth\nabla_y(y^j-\chi_0^j)\cdot\nu]=0\,,\qquad & \hbox{on $\Permemb$,}
\end{aligned}
\end{equation}
which can be simply rewritten as
\begin{equation}\label{eq:a10}
-\Div_y(\dfboth\nabla_y(y^j-\chi_0^j))=0\,,\qquad\hbox{in $Y$.}
\end{equation}
In turn, for $j=1,\dots,N$, $\chi_1^j$ satisfies the cell problem
\begin{equation}\label{eq:cell3}
\begin{aligned}
-\Div_y(\dfout\nabla_y\chi_1^j)=0\,,\qquad & \hbox{in $\Perout\times(0,T)$;}
\\
-\Div_y(\dfdis\nabla_y\chi_1^j)=0\,,\qquad & \hbox{in $\Perint\times(0,T)$;}
\\
[\dfboth\nabla_y\chi_1^j\cdot\nu]=0\,,\qquad & \hbox{on $\Permemb\times(0,T)$;}
\\
\capuno\partial_t[\chi^j_1]+\capdue [\chi^j_1]=\dfout\nabla_y\chi_1^j\cdot\nu\,,\qquad & \hbox{on $\Permemb\times(0,T)$;}
\\
\capuno[\chi^j_1](0) =\dfout\nabla_y(\chi_0^j-y^j)\cdot\nu\,,\qquad & \hbox{on $\Permemb$.}
\end{aligned}
\end{equation}
Finally, ${\mathcal T}(s_1)\in L^2(\Om_T; \X^1_{\#}(Y))$, for a.e. $x\in\Om$, is defined as the solution of the problem
\begin{equation}\label{eq:a14}
\begin{aligned}
-\Div_y(\dfout\nabla_y{\mathcal T}(s_1))=0\,,\qquad & \hbox{in $\Perout\times(0,T)$;}
\\
-\Div_y(\dfdis\nabla_y{\mathcal T}(s_1))=0\,,\qquad & \hbox{in $\Perint\times(0,T)$;}
\\
[\dfboth\nabla_y{\mathcal T}(s_1)\cdot\nu]=0\,,\qquad & \hbox{on $\Permemb\times(0,T)$;}
\\
\capuno\partial_t[{\mathcal T}(s_1)]+\capdue [{\mathcal T}(s_1)]=\dfout\nabla_y{\mathcal T}(s_1)\cdot\nu\,,\qquad & \hbox{on $\Permemb\times(0,T)$;}
\\
\capuno[{\mathcal T}(s_1)](0) =s_1\,,\qquad & \hbox{on $\Permemb$,}
\end{aligned}
\end{equation}
with the additional condition $\mathcal{M}_Y({\mathcal T}(s_1))=0$, a.e. in $\Om_T$.

Notice that well-posedness for \eqref{eq:cell1} and \eqref{eq:a10} is a classical problem, while
systems \eqref{eq:cell3} and \eqref{eq:a14} admit a unique solution by \cite[Theorem 6 and Remark 7]{Amar:Andreucci:Bisegna:Gianni:2005}.

Inserting \eqref{eq:factor1} and \eqref{eq:factor2} in \eqref{eq:PDEhom1} and \eqref{eq:PDEhom2}, we get the single-scale
homogenized degenerate parabolic system \eqref{eq:a15},
where the matrices $\Ahomuno,\,\Ahomdue$ and $\Ahomtre$ are defined as
\begin{equation}\label{eq:matrix2}
\begin{aligned}
\Ahomuno & =\frac{1}{|\Perout|}\int_{\Perout}\dfint\nabla_y(y-\zeta)\di y =
\frac{1}{|\Perout|}\int_{\Perout} (\nabla_y(y-\zeta))^T\dfint\nabla_y(y-\zeta)\di y\,,
\\
\Ahomdue & =\frac{1}{|\Perout|}\left(\int_{\Perout}\dfout\nabla_y(y-\chi_0)\di y+\int_{\Perint}\dfdis\nabla_y(y-\chi_0)\di y\right)
\\
&=\frac{1}{|\Perout|}\int_{Y}\dfboth\nabla_y(y-\chi_0)\di y
=\frac{1}{|\Perout|}\int_{Y}(\nabla_y(y-\chi_0))^T\dfboth\nabla_y(y-\chi_0)\di y\,,
\\
\Ahomtre(t) & = -\frac{1}{|\Perout|}\int_{Y} \dfboth\nabla_y\chi_1(y,t)\di y
\,,
\end{aligned}
\end{equation}
and
\begin{equation}\label{eq:a16}
\mathcal{F}=\frac{1}{|\Perout|}\Div\left(\int_Y\dfboth\nabla_y\mathcal{T}(s_1)\di y\right)\,.
\end{equation}
Clearly, $\Ahomuno$ and $\Ahomdue$ are symmetric and their positive definiteness is a standard matter.
Regarding $\Ahomtre$, we notice that, in the case of $\dfboth$ constant in $\Perint$ and $\Perout$ (with possibly two different constants), using Gauss-Green
formula, it can be written as
$$
\Ahomtre(t)=\frac{1}{|\Perout|}\int_{\Permemb}[\dfboth\chi_1(y,t)]\otimes\nu\di \sigma\,,
$$
whose symmetry has been proved in \cite[Corollary 4.1]{Amar:Andreucci:Bisegna:Gianni:2004a}. However, still using the ideas in
\cite[Section 4]{Amar:Andreucci:Bisegna:Gianni:2004a}, we can prove that $\Ahomtre$ is symmetric also for a non piecewise constant
matrix $\dfboth$, satisfying \eqref{eq:matrix}.
Indeed, by \cite[Lemma 4.1]{Amar:Andreucci:Bisegna:Gianni:2004a} (applied to $s_1=\dfout \nabla_y(\chi_0^j-y^j)\cdot\nu$ and
$s_2=\dfout \nabla_y(\chi_0^h-y^h)\cdot\nu$), it follows
\begin{equation}\label{eq:a32}
\int_{\Permemb}[\chi_1^j](t)[\chi_1^h](0)\di\sigma = \int_{\Permemb}[\chi_1^j](0)[\chi_1^h](t)\di\sigma \,.
\end{equation}
Moreover, recalling the initial condition in \eqref{eq:cell3}, we also have
\begin{equation}\label{eq:a33}
\capuno\int_{\Permemb}[\chi_1^j](t)[\chi_1^h](0)\di\sigma =\int_{\Permemb}[\chi_1^j](t)\ \dfout \nabla_y(\chi_0^h-y^h)\cdot\nu\di\sigma\,.
\end{equation}
Now, let us take $\chi^j_1$ as test function for the cell equation \eqref{eq:a10} (written for $\chi_0^h$) and
$\chi^h_0$ as test function for the cell problem \eqref{eq:cell3}. We get
\begin{align*}
& \int_Y\dfboth\nabla_y(\chi^h_0-y^h)\nabla_y\chi^j_1\di y=-\int_{\Permemb}\dfout\nabla_y(\chi^h_0-y^h)\cdot\nu[\chi^j_1]\di\sigma\,,
\\
& \int_Y\dfboth\nabla_y\chi^j_1\nabla_y\chi^h_0\di y=0\,,
\end{align*}
which implies
\begin{multline*}
\Ahomtre_{hj}(t)=-\frac{1}{|\Perout|}\int_Y \dfboth {\rm\bold e}^h\nabla_y\chi^j_1(y,t)\di y
=-\frac{1}{|\Perout|}\int_{\Permemb}\dfout\nabla_y(\chi^h_0-y^h)\cdot\nu[\chi^j_1](t)\di\sigma
\\
=-\frac{\capuno}{|\Perout|}\int_{\Permemb}[\chi_1^j](t)[\chi_1^h](0)\di\sigma\,,
\end{multline*}
where, in the last equality, we have used \eqref{eq:a33}.
Reasoning as above, we arrive also to
\begin{equation*}
\Ahomtre_{jh}(t)=-\frac{1}{|\Perout|}\int_Y \dfboth {\rm\bold e}^j\nabla_y\chi^h_1(y,t)\di y
=-\frac{\capuno}{|\Perout|}\int_{\Permemb}[\chi_1^h](t)[\chi_1^j](0)\di\sigma\,.
\end{equation*}
Finally, the symmetry is proven taken into account \eqref{eq:a32}.
\end{proof}

\begin{remark}\label{r:r9}
Notice that the limit problem \eqref{eq:a15} leads to a bidomain model with memory effects.
Indeed, let us denote by $u^B$ and $u^D$, respectively, the limits of the functions $\ubuno$ and $\ud$,
appearing in the system \eqref{eq:PDEin}--\eqref{eq:InitData3}. Recalling that $\ve=\ubuno-\ubdue$,
$\ue\mid_{\Omout_{T}}=\ubdue$ and $\ue\mid_{\Omint_{T}}=\ud$, and taking into account \eqref{conv6},
we can replace $v=u^B-u^D$ and $u=u^D$ in \eqref{eq:a15}, thus obtaining
\begin{equation}\label{eq:a15modif}
\begin{aligned}
\partial_t (u^B-u^D)-\Div\left(\Ahomuno\nabla u^B\right)+I_{ion}(u^B-u^D,w)=f_1\,,\qquad & \hbox{in $\Om_T$;}
\\
-\Div\left(\Ahomuno\nabla u^B\right)-\Div\left(\Ahomdue\nabla u^D+\int_0^t \Ahomtre(t-\tau)\nabla u^D(\tau)\di \tau\right)
&
\\
=\mathcal{F}+(f_1-f_2)\,,\qquad & \hbox{in $\Om_T$.}
\end{aligned}
\end{equation}
\end{remark}

\begin{remark}\label{r:r12}
In the case $\espo >1$, by unfolding the last inequality in the energy estimate \eqref{eq:energy5},
we obtain
\begin{equation*}
\btsunfop(\eps^{-1}[\ue])\wto 0\qquad \hbox{strongly in $L^2(\Om_T\times\Permemb)$.}
\end{equation*}
In particular, this implies that there is no jump in the corrector $\hat u$, and
thus one can check that the limit problem is standard.
\end{remark}

\subsection{The scaling $\espo\in (-1,1)$}\label{ss:scaling2}

We recall that Lemmas \ref{l:conv} and \ref{l:conv1} are still in force.

\begin{thm}\label{t:hom2}
Assume that $\capuno,\capdue,\dfinte,\dfoute,\dfdise,f_1,f_2,\overline v_0,\win$ and $s_{0\eps}$ are as
in Subsection \ref{ss:position}.
For every $\eps>0$, let $(\ve,\ue,\we)$ be the unique solution
of the system \eqref{eq:PDEinc}--\eqref{eq:InitData3c}, complemented with the gating problem
\eqref{eq:gating1}--\eqref{eq:gating2}. Then,
there exist  $v,u\in L^2(0,T;H^1_0(\Om))$, $\hat v\in L^2(\Om_T;H^1_\#(\Perout))$ with
   ${\mathcal{M}}_{\Perout} (\hat v)=0$, $\hat u\in L^2(\Om_T;\X^1_\#(Y))$ with ${\mathcal{M}}_{\Perout} (\hat u^B)=0
   ={\mathcal{M}}_{\Perint} (\hat u^D)$,
  and $w\in L^2(\Om_T)$, such that $\ve\wto v$, $\ue \wto u$, $\we\wto w$ in the sense of Lemmas \ref{l:conv} and \ref{l:conv1}.
  Moreover, $v,\hat v,u,\hat u,w$ are the unique solutions of two-scale homogenized system given by
  \eqref{eq:PDEhom1}--\eqref{eq:Fluxhom1}, with \eqref{eq:Fluxhom2} and  \eqref{eq:Circuithom1}  replaced with
\begin{alignat}2
\label{eq:Fluxhom2a0} &
\dfout(\nabla u+\nabla_y\hat u^B)\cdot\nu =0,\
&\text{on $\Om_T\times \Permemb$;}
\\
\label{eq:Fluxhom3a0} &
\dfdis(\nabla u+\nabla_y\hat u^D)\cdot\nu =0,\
&\text{on $\Om_T\times \Permemb$,}
\end{alignat}
complemented with the intial-boundary conditions \eqref{eq:InitDatahom1}, \eqref{eq:BoundDatahom} and the gating problem
\eqref{eq:gating3}--\eqref{eq:gating4}.
\end{thm}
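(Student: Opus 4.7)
The plan is to mirror the proof of Theorem \ref{t:hom1} step by step, with one essential modification: under the regime $\espo\in(-1,1)$, every interface term carrying the prefactor $\eps^{-\espo}$ is asymptotically negligible, so the coupling on $\Permemb$ collapses and the two-scale cell problems decouple between $\Perout$ and $\Perint$. First, I would invoke Lemmas \ref{l:conv} and \ref{l:conv1} (valid here since $\espo>-1$) to extract a subsequence along which $\ve\wto v$, $\ue\wto u$, $\we\wto w$, together with the unfolding convergences \eqref{conv2}, \eqref{conv4}, \eqref{conv5} (or \eqref{conv5bisse} in the connected/disconnected case) and, crucially, \eqref{conv6}, which gives $\btsunfop([\ue])\to 0$ strongly in $L^2(\Om_T\times\Permemb)$. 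Into the weak formulation \eqref{eq:weak5} I would insert the oscillating test functions $\testb=\phi_B+\eps\psi_B(\cdot,x/\eps)$, $\testduno=\phi_D+\eps\psi_D^1(\cdot,x/\eps)$, $\testddue=\phi_D+\eps\psi_D^2(\cdot,x/\eps)$ exactly as in the proof of Theorem \ref{t:hom1}, so that $[\testd]=\eps[\psi_D]$ on $\Memb$.

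The critical quantitative step is to show that the three interface integrals
\[
\frac{\capuno}{\eps^\espo}\int_{\Memb_T}[\ue]\,\partial_t[\testd]\di\sigma\di t,\ \ \frac{\capdue}{\eps^\espo}\int_{\Memb_T}[\ue][\testd]\di\sigma\di t,\ \ \frac{\capuno}{\eps^\espo}\int_{\Memb}s_{0\eps}[\testd](0)\di\sigma
\]
all vanish as $\eps\to 0$. Substituting $[\testd]=\eps[\psi_D]$ and combining H\"older's inequality with the bound \eqref{eq:energy5} (which provides $\|[\ue]\|^2_{L^2(\Memb_T)}\leq\const\,\eps^\espo$), the assumption \eqref{eq:a20} on $s_{0\eps}$, and the standard surface scaling $\|\psi_D(\cdot,\cdot,x/\eps)\|^2_{L^2(\Memb)}=O(\eps^{-1})$ for smooth $\psi_D$, each term is controlled by $\const\,\eps^{(1-\espo)/2}\to 0$ since $\espo<1$. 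The bulk terms of \eqref{eq:weak5} are then handled by unfolding exactly as in Theorem \ref{t:hom1} via Proposition \ref{t:smalleps_grad_weak_conv} and Lemmas \ref{l:conv}--\ref{l:conv1}, producing the variational identity \eqref{eq:a9} with the three membrane integrals and the initial-datum term on $\Permemb$ simply absent.

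Localization then yields \eqref{eq:PDEhom1}, \eqref{eq:PDEhom2}, the cell problems \eqref{eq:PDEmicrohom1}--\eqref{eq:PDEmicrohom3}, the no-flux condition \eqref{eq:Fluxhom1}, the initial datum \eqref{eq:InitDatahom1} and the boundary condition \eqref{eq:BoundDatahom}. The decoupled interface relations \eqref{eq:Fluxhom2a0} and \eqref{eq:Fluxhom3a0} emerge by testing independently with $\psi_D^1$ touching $\Permemb$ from $\Perout$ and with $\psi_D^2$ touching it from $\Perint$: in the absence of any surviving $[\hat u]$-coupling, the two arguments separate, and the cell problems for $\hat u^B$ and $\hat u^D$ become classical decoupled Neumann homogenization problems. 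The limit gating equations \eqref{eq:gating3}--\eqref{eq:gating4}, as well as the promotion of subsequential convergence to convergence of the whole sequence, follow exactly as in Theorem \ref{t:hom1}, via linearity of the limit system plus the global Lipschitz property \eqref{eq:ion1} of $I_{ion}$.

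The hard part will be the vanishing estimate for the membrane terms: the rate $\eps^{(1-\espo)/2}$ is sharp as $\espo\uparrow 1$ and must be verified uniformly for the dynamic contributions (where the factor $[\testd]=\eps[\psi_D]$ exactly supplies the $\eps$ needed to beat the $\eps^{-\espo}$ blow-up against the energy bound on $[\ue]$) and for the initial contribution, where \eqref{eq:a20} is indispensable; once this is under control, the remainder of the argument is a routine adaptation of the $\espo=1$ proof with all interface couplings simply stripped out.
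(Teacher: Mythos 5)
Your proposal is correct and follows essentially the same route as the paper: both reduce to the proof of Theorem \ref{t:hom1} and show that, after substituting $[\testd]=\eps[\psi_D]$, the three membrane integrals carrying the factor $\eps^{-\espo}$ are $O(\eps^{(1-\espo)/2})$ by H\"older combined with the energy bound \eqref{eq:energy5} and hypothesis \eqref{eq:a20}, hence vanish for $\espo<1$, so that the limit identity loses all interface coupling and localization yields the decoupled Neumann conditions \eqref{eq:Fluxhom2a0}--\eqref{eq:Fluxhom3a0}. The only cosmetic difference is that the paper phrases the estimate through the unfolded integrals over $\Om\times\Permemb$ (cf.\ \eqref{eq:a17}), while you work directly on $\Memb$ with the surface-measure scaling $|\Memb|\sim\eps^{-1}$; the rate $\eps^{(1-\espo)/2}$ is identical.
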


\begin{proof}
The proof can be carried out as in the case of Theorem \ref{t:hom1}. The main difference is that, now,
the last integral in \eqref{eq:weak1} is replaced by
$$
\frac{\capuno}{\eps^\espo}\, \eps\int_{\Memb}s_{0\eps}[\psi^D](0)\di\sigma
$$
and the fifth line of \eqref{eq:weak1} is replaced by
\begin{equation}\label{eq:a18}
-\frac{\capuno}{\eps^\espo}\,\eps\int_{\Memb}[\ue]\partial_t[\psi^D]\,\di\sigma
+\frac{\capdue}{\eps^\espo}\,\eps\int_{\Memb}[\ue][\psi^D]\,\di\sigma\,.
\end{equation}
However, taking into account that \eqref{eq:a20} can be rewritten in the form
$$
{\capuno}\int_{\Om\times\Permemb}\left(\unfop\left(\frac{s_{0\eps}}{ \eps^{ \frac{\espo+1}{2} }}\right)\right)^2\di x\di \sigma
=
{\capuno}{\eps}\int_{\Memb}\left(\frac{s_{0\eps}}{ \eps^{ \frac{\espo+1}{2} }}\right)^2\di \sigma
=
\frac{\capuno}{\eps^\espo}\int_{\Memb}s^2_{0\eps}\di\sigma\leq \const\,,
$$
it follows
\begin{multline}\label{eq:a17}
\frac{\capuno}{\eps^\espo}\, \eps\int_{\Memb}s_{0\eps}[\psi^D](0)\di\sigma
=\capuno\eps^{\frac{1-\espo}{2}}\int_{\Om\times\Permemb}\unfop\left( \frac{s_{0\eps}}{ \eps^{ \frac{\espo+1}{2} }}\right)
\unfop([\psi^D])(0)\di x\di\sigma
\leq\const\eps^{\frac{1-\espo}{2}}\to 0\,,
\end{multline}
and, thanks to \eqref{eq:energy5}, similar computations lead to the result that also the integrals in \eqref{eq:a18}
tend to zero, for $\eps\to 0$.
Hence, passing to the limit in \eqref{eq:weak1}, taking into account the previous facts and, finally, localizing, we get the thesis.
\end{proof}

\begin{thm}\label{t:t1a0}
The two-scale system \eqref{eq:PDEhom1}--\eqref{eq:Fluxhom1}, \eqref{eq:Fluxhom2a0} and \eqref{eq:Fluxhom3a0} can be rewritten as the single-scale degenerate parabolic system given by
\begin{equation}\label{eq:a15_a0}
\begin{aligned}
\partial_t v-\Div\left(\Ahomuno\nabla (v+u)\right)+I_{ion}(v,w)=f_1\,,\qquad & \hbox{in $\Om_T$;}
\\
-\Div\left(\Ahomuno\nabla (v+u)\right)-\Div\left(\wAhomdue\nabla u\right)
=f_1-f_2\,,\qquad & \hbox{in $\Om_T$,}
\end{aligned}
\end{equation}
complemented with the initial and the boundary conditions \eqref{eq:InitDatahom1}, \eqref{eq:BoundDatahom} and the gating problem
\eqref{eq:gating3}--\eqref{eq:gating4}, where the matrix $\Ahomuno$ is given in \eqref{eq:matrix2} and
$\wAhomdue=\Ahomdue_B+\Ahomdue_D$ is defined in \eqref{eq:matrix2_a0} and \eqref{eq:matrix3_a0}.
\end{thm}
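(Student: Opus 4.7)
The plan is to follow the factorization strategy used in Theorem~\ref{t:t1}, with the crucial simplification that in the scaling $\espo\in(-1,1)$ the imperfect transmission at the microscopic level degenerates into two \emph{decoupled} homogeneous Neumann conditions, see \eqref{eq:Fluxhom2a0}--\eqref{eq:Fluxhom3a0}. This removes both the time-dependent jump coupling and the initial-datum contribution which, in the case $\espo=1$, produced the memory kernel $\Ahomtre$ and the forcing $\mathcal{F}$ in \eqref{eq:a15}.

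Since \eqref{eq:PDEmicrohom1} and \eqref{eq:Fluxhom1} remain in force, I would re-use the factorization
\[
(\hat v+\hat u^B)(x,t,y)=-\zeta(y)\cdot\nabla(v+u)(x,t),
\]
with $\zeta=(\zeta^1,\dots,\zeta^N)$ the solution of the cell problem \eqref{eq:cell1}, thereby recovering the matrix $\Ahomuno$ of \eqref{eq:matrix2}. Next, I would observe that \eqref{eq:PDEmicrohom2} together with \eqref{eq:Fluxhom2a0} is a purely Neumann cell problem for $\hat u^B$ posed in $\Perout$ alone, and, symmetrically, \eqref{eq:PDEmicrohom3} with \eqref{eq:Fluxhom3a0} is a purely Neumann cell problem for $\hat u^D$ posed in $\Perint$ alone. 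Factorize
\[
\hat u^B(x,t,y)=-\chi^B(y)\cdot\nabla u(x,t), \qquad \hat u^D(x,t,y)=-\chi^D(y)\cdot\nabla u(x,t),
\]
with components $\chi^{B,j}\in H^1_\#(\Perout)$ and $\chi^{D,j}\in H^1_\#(\Perint)$ of zero mean, solving the classical Neumann cell problems obtained from \eqref{eq:cell1} by replacing the pair $(\dfint,\Perout)$ by $(\dfout,\Perout)$ and by $(\dfdis,\Perint)$, respectively. Well-posedness is standard.

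Substituting these factorizations in \eqref{eq:PDEhom1}--\eqref{eq:PDEhom2} and dividing by $|\Perout|$, the first line of \eqref{eq:a15_a0} follows immediately. For the second line, I would regroup the integrand on $\Perout$ as
\[
(\dfint+\dfout)(\nabla u+\nabla_y\hat u^B)+\dfint(\nabla v+\nabla_y\hat v)=\dfint\bigl(\nabla(v+u)+\nabla_y(\hat v+\hat u^B)\bigr)+\dfout(\nabla u+\nabla_y\hat u^B),
\]
so that the $\dfint$-piece reproduces $\Ahomuno\nabla(v+u)$, the $\dfout$-piece yields $\Ahomdue_B\nabla u$, and the $\dfdis$-integral over $\Perint$ yields $\Ahomdue_D\nabla u$, with $\Ahomdue_B$ and $\Ahomdue_D$ defined through the analogue of \eqref{eq:matrix2} for the cell functions $\chi^B$ and $\chi^D$. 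Their sum is $\wAhomdue$. Symmetry and positive definiteness of $\Ahomuno$ and $\wAhomdue$ follow verbatim from the quadratic-form rewriting already used after \eqref{eq:matrix2}.

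I do not foresee any serious obstacle here: the substantial difficulty of Theorem~\ref{t:t1}, namely handling the evolution cell problem \eqref{eq:cell3}--\eqref{eq:a14} and proving symmetry of $\Ahomtre$, is absent precisely because the limiting interface conditions no longer couple $\hat u^B$ and $\hat u^D$. The only point requiring a brief check is the connected/disconnected sub-case, in which $\hat u^D$ is determined only up to a locally constant function on the disjoint components of $\Perint$; this is harmless because only $\nabla_y\hat u^D$ enters the computation and the Neumann cell problem for $\chi^D$ is well-posed on each component separately.
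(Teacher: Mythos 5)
Your proposal is correct and follows essentially the same route as the paper: the same factorization of $\hat v+\hat u^B$ via the cell problem \eqref{eq:cell1}, and the factorization of $\hat u^B$ and $\hat u^D$ through the two decoupled Neumann cell problems (the paper's \eqref{eq:cell2_a0}), leading to $\Ahomuno$ and $\wAhomdue=\Ahomdue_B+\Ahomdue_D$ exactly as in \eqref{eq:matrix2}, \eqref{eq:matrix2_a0} and \eqref{eq:matrix3_a0}. Your explicit regrouping of the integrand and the observation on the connected/disconnected sub-case (where $\widehat\chi_0^{D,j}=y^j$ up to constants, so $\Ahomdue_D=0$, cf.\ Remark \ref{r:r5}) are consistent with the paper's treatment.
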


\begin{proof}
As in the case $\espo=1$, taking into account \eqref{eq:PDEmicrohom1} and \eqref{eq:Fluxhom1}, we can factorize
$\hat v+\hat u^B$ as in \eqref{eq:factor1},
where the cell functions $\zeta=(\zeta^1,\dots,\zeta^N)$, with $\zeta^j\in H^1_{\#}(\Perout)$ and $\mathcal{M}_{\Perout}(\zeta^j)=0$,
are the solutions of the cell problem \eqref{eq:cell1}.
Moreover, taking into account \eqref{eq:PDEmicrohom2}, \eqref{eq:PDEmicrohom3}, \eqref{eq:Fluxhom2a0} and \eqref{eq:Fluxhom3a0},
we can factorize
\begin{equation}\label{eq:factor2_a0}
\hat u(x,t,y)=-\widehat\chi_0(y)\cdot \nabla u(x,t)\,,
\end{equation}
where the cell functions $\widehat\chi_0=(\widehat\chi_0^1,\dots,\widehat\chi_0^N)$, with $\widehat\chi^j_0
=(\widehat\chi^{B,j}_0,\widehat\chi^{D,j}_0)\in \X^1_{\#}(Y)$ and
$\mathcal{M}_{\Perout}(\widehat\chi_0^{B,j})=0=\mathcal{M}_{\Perint}(\widehat\chi_0^{D,j})$, for $j=1,\dots, N$, satisfy the cell problem
\begin{equation}\label{eq:cell2_a0}
\begin{aligned}
-\Div_y(\dfout\nabla_y(y^j-\widehat\chi_0^{B,j}))=0\,,\qquad & \hbox{in $\Perout$;}
\\
\dfout\nabla_y(y^j-\widehat\chi_0^{B,j})\cdot\nu=0\,,\qquad & \hbox{on $\Permemb$;}
\\
-\Div_y(\dfdis\nabla_y(y^j-\widehat\chi_0^{D,j}))=0\,,\qquad & \hbox{in $\Perint$;}
\\
\dfdis\nabla_y(y^j-\widehat\chi_0^{D,j})\cdot\nu=0\,,\qquad & \hbox{on $\Permemb$,}
\end{aligned}
\end{equation}
which are two independent Neuman problems.
Inserting these factorizations in \eqref{eq:PDEhom1} and \eqref{eq:PDEhom2}, we get the single-scale
homogenized degenerate parabolic system \eqref{eq:a15_a0},
where the matrix $\Ahomuno$ coincides with the one defined in \eqref{eq:matrix2}, while $\wAhomdue=\Ahomdue_B+\Ahomdue_D$ is given by
\begin{alignat}2
\label{eq:matrix2_a0}
& \Ahomdue_B\! \!=\frac{1}{|\Perout|}\int_{\Perout}\!\!\dfout\nabla_y(y-\widehat\chi^B_0)\di y
=\frac{1}{|\Perout|}\int_{\Perout}\!(\nabla_y(y-\widehat\chi^B_0))^T\dfout\nabla_y(y-\widehat\chi^B_0)\di y,
\\
\label{eq:matrix3_a0}
& \Ahomdue_D \!\!=\frac{1}{|\Perout|}\int_{\Perint}\!\!\dfdis\nabla_y(y-\widehat\chi^D_0)\di y
=\frac{1}{|\Perout|}\int_{\Perint}\!(\nabla_y(y-\widehat\chi^D_0))^T\dfdis\nabla_y(y-\widehat\chi^D_0)\di y.
\end{alignat}
Clearly, $\Ahomdue_B$ and $\Ahomdue_D$ are symmetric and their positive definiteness is a standard matter.
\end{proof}

\begin{remark}\label{r:r10}
As in Remark \ref{r:r9}, let us denote by $u^B$ and $u^D$, respectively, the limits of the functions $\ubuno$ and $\ud$,
appearing in the system \eqref{eq:PDEin}--\eqref{eq:InitData3}, so that,
replacing $v=u^B-u^D$ and $u=u^D$ in \eqref{eq:a15_a0}, we obtain
\begin{equation}\label{eq:a15_a0modif}
\begin{aligned}
\partial_t (u^B-u^D)-\Div\left(\Ahomuno\nabla u^B\right)+I_{ion}(u^B-u^D,w)=f_1\,,\qquad & \hbox{in $\Om_T$;}
\\
-\Div\left(\Ahomuno\nabla u^B\right)-\Div\left(\wAhomdue\nabla u^D\right)
=f_1-f_2\,,\qquad & \hbox{in $\Om_T$.}
\end{aligned}
\end{equation}
\end{remark}

\begin{remark}\label{r:r5}
In the connected/disconnected case, $\chi_0^{D,j}(y)=y^j$, up to an additive constant, so that $\Ahomdue_D=0$.
Therefore, $\Ahomdue=\Ahomdue_B$ and the limit problem is affected only by the physical properties of the phase $\Perout$.
\end{remark}

\subsection{The scaling $\espo=-1$}\label{ss:scaling3}

In addition to what stated in Lemmas \ref{l:conv} and \ref{l:conv1}, we can also state the following result.

\begin{lemma}\label{l:conv3}
Under the assumptions of Lemma \ref{l:conv}, we have that, up to a subsequence, still denoted by $\eps$,
\begin{equation}
\label{conv7tris}
\btsunfop([\ue])\wto [u]\qquad \hbox{weakly in $L^2(\Om_T\times\Permemb)$.}
\end{equation}
Here, with a little abuse of notation, $[u]=u^B-u^D$.
\end{lemma}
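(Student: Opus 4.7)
The plan is to combine the sharp energy bound at scaling $\ell=-1$ with Proposition~\ref{p:comp AB}, which already packages the boundary unfolding convergence we need. First I would specialize the last line of the energy estimate~\eqref{eq:energy5} to $\ell=-1$, obtaining
\[
\eps\int_{\Memb_T}[\ue]^2\di\sigma\di t\leq\const.
\]
The standard rescaling identity for the boundary unfolding operator converts this into boundedness of $\btsunfop([\ue])$ in $L^2(\Om_T\times\Permemb)$, so that, up to a subsequence, $\btsunfop([\ue])\wto W$ weakly in that space for some $W\in L^2(\Om_T\times\Permemb)$.

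The remaining task is to identify $W=[u]$. For this I would apply Proposition~\ref{p:comp AB} directly with $\genfune=\ue$. The hypothesis $\int_{\Om_T}(|\ue|^2+|\nabla\ue|^2)\di x\di t\leq\const$ is supplied by the second and third lines of~\eqref{eq:energy5} together with the Poincar\'e inequality~\eqref{eq:poincare1}, and $\ue\in L^2(0,T;\X^1_{0\eps}(\Om))$ is automatic from the setup of Section~\ref{ss:position}. The conclusion~\eqref{conv comp AB5} of that proposition then reads $\btsunfop([\ue])\wto u^B-u^D=[u]$, which identifies $W=[u]$ and gives the claim. In the connected/disconnected geometry, the same conclusion holds via Remark~\ref{r:r2}, where $u^D\in L^2(\Om_T)$ is interpreted as a $y$-independent function on $\Permemb$.

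The only mildly delicate point I expect is to make sure the Gauss--Green identification used in the proof of~\eqref{conv comp AB5} carries over cleanly to the connected/disconnected setting, where $u^D$ lacks gradient regularity on $\Om$; this is precisely handled by the modified convergence~\eqref{eq:a13}, which still allows one to pass to the limit in the cellwise integration-by-parts formula and thereby to recover the trace of the unfolded limit on $\Permemb$. Once $W$ is identified, uniqueness of the weak limit promotes the convergence from the extracted subsequence to the whole sequence indexed in Lemma~\ref{l:conv}.
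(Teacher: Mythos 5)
Your argument is correct and follows essentially the same route as the paper: the authors likewise obtain \eqref{conv7tris} directly from the energy estimate \eqref{eq:energy5} (which at $\espo=-1$ gives exactly the bound $\eps\int_{\Memb_T}[\ue]^2\di\sigma\di t\leq\const$ needed for Proposition \ref{p:comp AB}) together with the identification \eqref{conv comp AB5}. Your additional remarks on verifying the hypothesis \eqref{stima comp AB} and on the connected/disconnected case via Remark \ref{r:r2} only make explicit what the paper leaves implicit.
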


\begin{proof}
Assertion \eqref{conv7tris} is a consequence of \eqref{eq:energy5} and \eqref{conv comp AB5} in Proposition \ref{p:comp AB}.
\end{proof}

\begin{thm}\label{t:hom1a1}
Assume to be in the connected/connected geometry.
Let $\capuno,\capdue,\dfinte,$ $\dfoute,\dfdise,f_1,f_2,\overline v_0,\win$ and $s_{0\eps}$ be
as in Subsection \ref{ss:position} and assume
that $\btsunfop(s_{0\eps})\wto \overline s_1$ weakly in $L^2(\Om\times\Permemb)$.
For every $\eps>0$, let $(\ve,\ue,\we)$ be the unique solution
of the system \eqref{eq:PDEinc}--\eqref{eq:InitData3c}, complemented with the gating problem
\eqref{eq:gating1}--\eqref{eq:gating2}. Then,
there exist  $v,u^B,u^D\in L^2(0,T;H^1_0(\Om))$, $\hat v\in L^2(\Om_T;H^1_\#(\Perout))$ with
   ${\mathcal{M}}_{\Perout} (\hat v)=0$, $\hat u\in L^2(\Om_T;\X^1_\#(Y))$ with ${\mathcal{M}}_{\Perout} (\hat u^B)=0=
   {\mathcal{M}}_{\Perint} (\hat u^D)$,
  and $w\in L^2(\Om_T)$, such that $\ve\wto v$, $\ue\chi_{\Omout_T} \wto u^B$, $\ue\chi_{\Omint_T} \wto u^D$,
  $\we\wto w$ in the sense of Lemmas \ref{l:conv}, \ref{l:conv1}
  and \ref{l:conv2}.
  Moreover, $v,\hat v,u^B,u^D,\hat u,w$ are the unique solutions of two-scale homogenized system given by
\begin{alignat}2
\nonumber &
|\Perout|v_t-\Div\left(\int_{\Perout}\dfint(\nabla (v+u)+\nabla_y(\hat v+\hat u^B))\di y\right)&
\\
\label{eq:PDEhom1a1} & \qquad\qquad\qquad\qquad\qquad
+|\Perout|I_{ion}(v,w)= |\Perout| f_1,\ &\text{in $\Om_T$;}
\\
\nonumber &
\capuno|\Permemb|\partial_t[u]+\capdue|\Permemb|[u]-\Div\left(\int_{\Perout}(\dfint+\dfout)(\nabla u^B\!+\!\nabla_y\hat u^B)\di y\right)
&
\\
\label{eq:PDEhom2a1}& \qquad\qquad
-\Div\left(\int_{\Perout} \dfint(\nabla v^B\!+\!\nabla_y \hat v^B)\di y\right)= |\Perout|(f_1-f_2),\ &\text{in $\Om_T$;}
\\
\label{eq:PDEhom3a1}& \qquad\qquad
\capuno|\Permemb|\partial_t[u]+\capdue|\Permemb|[u]+\Div\left(\int_{\Perint} \dfdis(\nabla u^D\!+\!\nabla_y \hat u^D)\di y\right)=0,\quad &\text{in $\Om_T$;}
\end{alignat}
\begin{alignat}2
\label{eq:PDEmicrohom1a1} &
-\Div_y(\dfint\nabla (v+u^B)+\dfint\nabla_y(\hat v+\hat u^B))=0,\qquad\qquad
&\text{in $\Om_T\times \Perout$;}
\\
\label{eq:PDEmicrohom2a1} &
-\Div_y(\dfout(\nabla u^B+\nabla_y\hat u^B))=0,\
&\text{in $\Om_T\times \Perout$;}
\\
\label{eq:PDEmicrohom3a1} &
-\Div_y(\dfdis(\nabla u^D+\nabla_y\hat u^D))=0,\
&\text{in $\Om_T\times \Perint$;}
\\
\label{eq:Fluxhom1a1} &
\dfint\nabla (v+u^B)+\dfint\nabla_y(\hat v+\hat u^B)\cdot\nu =0,\
&\text{on $\Om_T\times \Permemb$;}
\\
\label{eq:Fluxhom2a1} &
\dfout(\nabla u^B+\nabla_y\hat u^B)\cdot\nu =0,\
&\text{on $\Om_T\times \Permemb$;}
\\
\label{eq:Fluxhom3a1} &
\dfdis(\nabla u^D+\nabla_y\hat u^D)\cdot\nu =0,\
&\text{on $\Om_T\times \Permemb$;}
\\
\label{eq:InitDatahom1a1} &
v(x,0)=\overline v_0,&\text{in $\Om$;}
\\
\label{eq:InitDatahom2a1} &
[u](x,0)=\frac{1}{|\Permemb|}\int_{\Permemb} \overline s_1\di\sigma,&\text{in $\Om$;}
\\
\label{eq:BoundDatahoma1} &
v,u^B,u^D=0,&\text{on $\partial\Om$,}
\end{alignat}
complemented with the gating problem \eqref{eq:gating3}-\eqref{eq:gating4}.
\end{thm}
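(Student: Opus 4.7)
The strategy mirrors that of Theorem~\ref{t:hom1}, with two structural modifications dictated by the scaling $\espo=-1$: first, the interface jump $[\ue]$ now survives at the macroscopic scale (Lemma~\ref{l:conv3}), producing two genuinely different limits $u^B$ and $u^D$; second, the surface integrals on $\Memb$ carrying the prefactor $\eps^{-\espo}=\eps$ balance exactly the $\eps$-weight intrinsic to the boundary unfolding formula, so they contribute nontrivial terms to the limit without the intervention of any corrector jump in $\hat u$. Accordingly, I would insert into the weak formulation \eqref{eq:weak5} the test functions
\begin{equation*}
\testb=\phi_B+\eps\psi_B(x,t,x/\eps),\qquad
\testduno=\phi_D^1+\eps\psi_D^1(x,t,x/\eps),\qquad
\testddue=\phi_D^2+\eps\psi_D^2(x,t,x/\eps),
\end{equation*}
with $\phi_B,\phi_D^1,\phi_D^2\in\CC^1(\overline\Om_T)$ compactly supported in $\Om$ and vanishing at $t=T$, and $\psi_B,\psi_D^1,\psi_D^2$ smooth in $(x,t)$ and $Y$-periodic in $y$ on the appropriate cell components. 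The essential novelty with respect to the $\espo=1$ case is that $\phi_D^1$ and $\phi_D^2$ are now \emph{independent}, so that the macroscopic jump $[\phi_D]=\phi_D^1-\phi_D^2$ is allowed to be nontrivial.

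The bulk integrals pass to the limit exactly as in the proof of Theorem~\ref{t:hom1}, by invoking Lemmas~\ref{l:conv} and~\ref{l:conv1}, Proposition~\ref{t:smalleps_grad_weak_conv} and the product rule \eqref{eq:unfop_product}. For the membrane integrals, the standard boundary unfolding identity $\eps\int_{\Memb_T}f\di\sigma\di t\simeq\int_{\Om_T\times\Permemb}\btsunfop(f)\di\sigma\di x\di t$ rewrites
\begin{equation*}
\frac{\capdue}{\eps^{\espo}}\int_{\Memb_T}[\ue][\testd]\di\sigma\di t
=\capdue\int_{\Om_T\times\Permemb}\btsunfop([\ue])\,\btsunfop([\testd])\di\sigma\di x\di t,
\end{equation*}
and, since $\btsunfop([\testd])\to[\phi_D]$ strongly in $L^2$ while $\btsunfop([\ue])\wto[u]=u^B-u^D$ weakly in $L^2(\Om_T\times\Permemb)$ by Lemma~\ref{l:conv3}, the right-hand side converges to $\capdue|\Permemb|\int_{\Om_T}[u][\phi_D]\di x\di t$. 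The capacitive contribution and the $s_{0\eps}$-initial-datum contribution are handled identically, using the hypothesis $\btsunfop(s_{0\eps})\wto\overline s_1$; this is where the $|\Permemb|$ prefactors in \eqref{eq:PDEhom2a1}--\eqref{eq:InitDatahom2a1} originate, and the $\Permemb$-average of $\overline s_1$ in \eqref{eq:InitDatahom2a1} follows from the $y$-independence of $[\phi_D](0)$.

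Localizing the resulting limit identity produces the claimed system: setting $\psi_B=\psi_D^1=\psi_D^2=0$ and varying $\phi_B,\phi_D^1,\phi_D^2$ \emph{independently} yields the three macroscopic equations \eqref{eq:PDEhom1a1}--\eqref{eq:PDEhom3a1}, the choice $\phi_D^1=0$, $\phi_D^2\ne 0$ isolating \eqref{eq:PDEhom3a1} (which carries no $f_1-f_2$ source because both the bulk conductivity and the source terms for the $\testduno$-equation live in $\Omout_T$); switching to $\phi_B=\phi_D^1=\phi_D^2=0$ with $\psi_B,\psi_D^1,\psi_D^2$ in turn compactly supported in $\Perout$ or $\Perint$ delivers the three cell equations \eqref{eq:PDEmicrohom1a1}--\eqref{eq:PDEmicrohom3a1} and, through the residual boundary contributions on $\Permemb$, the Neumann-type flux conditions \eqref{eq:Fluxhom1a1}--\eqref{eq:Fluxhom3a1}. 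The initial condition \eqref{eq:InitDatahom2a1} comes from integrating the capacitive term by parts in time and matching the resulting $t=0$ trace against the unfolded initial datum. The limit gating problem \eqref{eq:gating3}--\eqref{eq:gating4} follows as in Theorem~\ref{t:hom1}, via \eqref{conv7}, \eqref{conv9} and the integral identity \eqref{eq:gating5}.

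The main technical point concerns the regularity of the two now-independent limits $u^B$ and $u^D$: the connected/connected geometry allows the construction, via the extension operators of \cite{Hopker:2016}, of uniformly $H^1$-bounded extensions to all of $\Om$ of both $\chi_{\Omout}\ue$ and $\chi_{\Omint}\ue$, preserving the null trace on $\partial\Om$; Lemma~\ref{l:conv1} then places their weak limits in $L^2(0,T;H^1_0(\Om))$, which in particular ensures the Dirichlet condition \eqref{eq:BoundDatahoma1}. Uniqueness of $(v,\hat v,u^B,u^D,\hat u,w)$ follows from the linearity of the limit system and the Lipschitz continuity of $I_{ion}$ ensured by \eqref{eq:ion1}, via a standard Gr\"onwall argument; this in turn upgrades the extracted subsequences to full-sequence convergence.
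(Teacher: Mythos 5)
Your proposal follows essentially the same route as the paper's proof: the same oscillating test functions with independent macroscopic components $\phi_D^1,\phi_D^2$, the same use of Lemma~\ref{l:conv3} and the boundary unfolding identity to retain the membrane terms at order $\eps^{-\espo}=\eps$, and the same localization scheme, extension argument via \cite{Hopker:2016}, and uniqueness conclusion. The only cosmetic imprecision is that the flux conditions \eqref{eq:Fluxhom1a1}--\eqref{eq:Fluxhom3a1} are obtained from test functions $\psi$ \emph{not} compactly supported in $\Perout$ or $\Perint$ (after the cell PDEs are established with compactly supported ones), but this is the standard two-step localization and does not affect correctness.
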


\begin{remark}\label{r:r7}
In the connected/disconnected case, $u^D\in L^2(\Om_T)$ and equations \eqref{eq:PDEhom1a1}, \eqref{eq:PDEhom2a1},
\eqref{eq:PDEmicrohom1a1}, \eqref{eq:PDEmicrohom2a1}, \eqref{eq:Fluxhom1a1}, \eqref{eq:Fluxhom2a1}, \eqref{eq:InitDatahom1a1}, \eqref{eq:InitDatahom2a1} are still in force, with $v,u^B$ having null trace on $\partial\Om\times(0,T)$.
However, as we will see in Remark \ref{r:r8} below, we will find that equation \eqref{eq:PDEhom3a1} becomes
\begin{equation}\label{eq:PDEhom1a1_bis}
\capuno\partial_t[u]+\capdue[u]=0\,,\qquad \hbox{in $\Om_T$,}
\end{equation}
\eqref{eq:PDEmicrohom3a1} and \eqref{eq:Fluxhom3a1} disappear, and
equation \eqref{eq:PDEhom2a1} simplifies to
\begin{multline}\label{eq:PDEhom1a2_bis}
-\Div\left(\int_{\Perout}(\dfint+\dfout)(\nabla u^B\!+\!\nabla_y\hat u^B)\di y\right)
\\
-\Div\left(\int_{\Perout} \dfint(\nabla v^B+\nabla_y \hat v^B)\di y\right)= |\Perout|(f_1-f_2),\qquad \text{in $\Om_T$.}
\end{multline}
Therefore,
the function $u^D$ can be explicitly determined in terms of $u^B$ and $\overline s_1$, i.e.
$$
u^D(x,t)=u^B(x,t)-\left(\frac{1}{|\Permemb|}\int_{\Permemb}\overline s_1(x,y)\di\sigma(y)\right){\rm e}^{-\capdue t/\capuno}\,.
$$
In particular, the damaged zone affects the macroscopic model only through the
physical properties ($\capuno,\capdue$) of the boundary of such a zone, while $\dfdis$ has no influence in the homogenized limit.
\end{remark}

{\it Proof of Theorem \ref{t:hom1a1}.}
In the weak formulation \eqref{eq:weak5}, let us take, as test functions,
\begin{equation*}
\testb=\phi_B(x,t)+\eps\psi_B(x,t,x/\eps)
\qquad\hbox{and}\qquad
\testd=\phi^i_D(x,t)+\eps\psi^i_D(x,t,x/\eps)\,,\ i=1,2\,,
\end{equation*}
where $\phi_B,\phi^1_D,\phi^2_D\in \CC^1(\overline\Om_T)$, with compact support in $\Om$, for every $t\in[0,T]$,
and such that $\phi_B(x,T)=\phi_D(x,T)=0$, for every $x\in\overline\Om$, $\psi_B\in \CC^1(\overline\Om_T;\CC^1_\#(\overline\Perout))$,
with compact support in $\Om$, for every $(t,y)\in[0,T]\times\overline Y$, and such that $\psi_B(x,T,y)=0$, for every $(x,y)\in\overline\Om\times
\overline Y$, $\psi_D=(\psi^1_D,\psi^2_D)\in \CC^1(\Om_T;\X^1_\#(Y))$,
with compact support in $\overline\Om$, for every $(t,y)\in[0,T]\times\overline Y$,
and such that $[\psi_D(x,T,y)]=0$, for every $(x,y)\in\overline\Om\times \overline Y$.
Then, we obtain
\begin{multline*}%\label{eq:weak3}
-\int_{\Omout_T} \ve(\partial_t\phi_B+\eps\partial_t\psi_B)\di x\di t+\int_{\Omout_T}\dfinte\nabla \ve\cdot(\nabla \phi_B+\eps\nabla_x\psi_B+\nabla_y\psi_B)\di x\di t
\\
+\int_{\Omout_T}\dfinte\nabla \ue\cdot(\nabla \phi_B+\eps\nabla_x\psi_B+\nabla_y\psi_B)\di x\di t
+\int_{\Omout_T} I_{ion}(\ve,\we)(\phi_B+\eps\psi_B)\di x\di t
\\
+\int_{\Omout_T}(\dfinte+\dfoute)\nabla \ue\cdot (\nabla \phi^1_D+\eps\nabla_x\psi_D^1+\nabla_y\psi_D^1)\di x\di t
\\
+\!\!\int_{\Omout_T}\!\dfinte\nabla \ve\cdot(\nabla \phi_D^1+\eps\nabla_x\psi_D^1+\nabla_y\psi_D^1)\di x\di t
+\!\!\int_{\Omint_T}\!\dfdise\nabla  \ue\cdot (\nabla \phi_D^2+\eps\nabla_x\psi_D^2+\nabla_y\psi_D^2)\di x\di t
\end{multline*}
\begin{multline}\label{eq:weak3}
%\\
-{\capuno}{\eps}\int_{\Memb_T}[\ue]\partial_t[\phi^D+\eps\psi_D]\di\sigma\di t
+{\capdue}\eps\int_{\Memb_T}[\ue][\phi^D+\eps\psi_D]\di\sigma\di t
\\
=\int_{\Omout_T}f_1(\phi_B+\eps\psi_B)\di x\di t+\int_{\Omout_T}(f_1-f_2)(\phi_D^1+\eps\psi_D^1)\di x\di t
\\
+\int_{\Omout} \overline v_0(\phi_B(0)+\eps\psi_B(0))\di x
+{\capuno\eps}\int_{\Memb}\frac{s_{0\eps}}{\eps}[\psi_D](0)\di\sigma\,,
\end{multline}
where, with a little abuse of notation, we denote by $[\phi^D]=\phi^1_D-\phi^2_D$.
Unfolding and passing to the limit, we arrive at
\begin{multline*}
-|\Perout|\int_{\Om_T}v \partial_t\phi_B\di x\di t+\int_{\Om_T}\int_{\Perout}\dfint(\nabla v+\nabla_y\hat v)\cdot
(\nabla\phi_B+\nabla_y\psi_B)\di y\di x\di t
\\
+\int_{\Om_T}\int_{\Perout}\dfint(\nabla u^B+\nabla_y\hat u^B)\cdot(\nabla\phi_B+\nabla_y\psi_B)\di y \di x\di t
+|\Perout|\int_{\Om_T}I_{ion}(v,w)\phi_B\di x\di t
\\
+\int_{\Om_T}\int_{\Perout}(\dfint+\dfout)(\nabla u^B+\nabla_y\hat u^B)\cdot(\nabla\phi_D^1+\nabla_y\psi_D^1)\di y\di x\di t
\\
+\int_{\Om_T}\int_{\Perout}\dfint(\nabla v+\nabla_y\hat v)\cdot(\nabla\phi_D^1+\nabla_y\psi_D^1)\di y\di x\di t
\\
+\int_{\Om_T}\int_{\Perint}\dfdis(\nabla u^D+\nabla_y\hat u^D)\cdot(\nabla \phi_D^2+\nabla_y\psi_D^2)\di y\di x \di t
\end{multline*}
\begin{multline}\label{eq:a9a1}
-\capuno\int_{\Om_T}\int_{\Permemb}[u]\partial_t[\phi_D]\di\sigma\di x \di t
+\capdue\int_{\Om_T}\int_{\Permemb}[u][\phi_D]\di\sigma\di x\di t
\\
= |\Perout|\int_{\Om_T}f_1\phi_B\di x\di t+|\Perout|\int_{\Om_T}(f_1-f_2)\phi^1_D\di x\di t
\\
+|\Perout|\int_{\Om}\overline v_0\phi_B(0)\di x+\capuno\int_\Om\int_\Permemb \overline s_1[\phi_D](0)\di\sigma\di x\,,
\end{multline}
where we have used Lemmas \ref{l:conv}, \ref{l:conv1} and \ref{l:conv3}.
In order to get the strong formulation \eqref{eq:PDEhom1a1}--\eqref{eq:BoundDatahoma1}, we localize \eqref{eq:a9a1}, taking first
$\psi_B=\phi_D^1=\phi_D^2=\psi_D^1=\psi_D^2=0$, then $\phi_B=\phi^2_D=\psi_B=\psi_D^1=\psi_D^2=0$ and finally
$\phi_B=\phi^1_D=\psi_B=\psi_D^1=\psi_D^2=0$, so that we arrive at \eqref{eq:PDEhom1a1}--\eqref{eq:PDEhom3a1}
and \eqref{eq:InitDatahom1a1}, \eqref{eq:InitDatahom2a1}. Moreover, we take $\phi_B=\phi_D^1=\phi_D^2=\psi_D^1=\psi_D^2=0$, which gives
\eqref{eq:PDEmicrohom1a1} and \eqref{eq:Fluxhom1a1}.
In the next step, we take first $\phi_B=\phi^1_D=\phi^2_D=\psi_B=\psi_D^2=0$ and
then $\phi_B=\phi^1_D=\phi^2_D=\psi_B=\psi_D^1=0$, in order to obtain
\eqref{eq:PDEmicrohom2a1}, \eqref{eq:PDEmicrohom3a1} and \eqref{eq:Fluxhom2a1}, \eqref{eq:Fluxhom3a1}.
The boundary condition \eqref{eq:BoundDatahoma1} is a direct consequence of the fact that $v, u^1_D, u^2_D\in L^2(0,T;H^1_0(\Om))$.

Finally, the limit gating problem \eqref{eq:gating3}--\eqref{eq:gating4} and the uniqueness for the two-scale homogenized system
are obtained as in the proof of Theorem \ref{t:hom1}.
\hfill$\Box$

\begin{thm}\label{t:t1a1}
Assume to be in the connected/connected geometry. Then, the two-scale system \eqref{eq:PDEhom1a1}--\eqref{eq:BoundDatahoma1}
can be rewritten as the single-scale degenerate parabolic system given by
\begin{equation}\label{eq:a15a1}
\begin{aligned}
\partial_t v-\Div\left(\Ahomuno\nabla (v+u^B)\right)+I_{ion}(v,w)=f_1\,,\qquad & \hbox{in $\Om_T$;}
\\
-\Div\left(\Ahomuno\nabla (u^B+v)\right)-\Div\left(\Ahomdue_B\nabla u^B+\Ahomdue_D\nabla u^D\right)
=f_1-f_2\,,\qquad & \hbox{in $\Om_T$;}
\\
\frac{\capuno|\Permemb|}{|\Perout|}\partial_t[u]+\frac{\capdue|\Permemb|}{|\Perout|}[u]+\Div\left(\Ahomdue_D\nabla u^D\right)
=0\,,\qquad & \hbox{in $\Om_T$,}
\end{aligned}
\end{equation}
complemented with the initial and the boundary conditions \eqref{eq:InitDatahom1a1}--\eqref{eq:BoundDatahoma1} and the gating problem
\eqref{eq:gating3}--\eqref{eq:gating4}, where the matrix $\Ahomuno$ is defined in \eqref{eq:matrix2} and $\Ahomdue_B,\Ahomdue_D$ are defined in \eqref{eq:matrix2_a0} and \eqref{eq:matrix3_a0}, respectively.
\end{thm}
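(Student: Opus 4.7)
The plan is to mirror the factorization-and-substitution strategy already used in the proofs of Theorems \ref{t:t1} and \ref{t:t1a0}, carried out separately on the three microscopic equations of the two-scale system, and then to exploit the fact that the same time-derivative term appears in both \eqref{eq:PDEhom2a1} and \eqref{eq:PDEhom3a1}, so that one of them can be used to eliminate $\partial_t[u]$ from the other.

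First, I would factorize the three correctors. Since $\hat v+\hat u^B$ satisfies \eqref{eq:PDEmicrohom1a1} together with the no-flux condition \eqref{eq:Fluxhom1a1}, exactly as in \eqref{eq:factor1}--\eqref{eq:cell1}, I write
\begin{equation*}
(\hat v+\hat u^B)(x,t,y)=-\zeta(y)\cdot\nabla(v+u^B)(x,t),
\end{equation*}
with $\zeta^j\in H^1_\#(\Perout)$, $\mathcal{M}_{\Perout}(\zeta^j)=0$ solving \eqref{eq:cell1}. Next, since the decoupled equations \eqref{eq:PDEmicrohom2a1}--\eqref{eq:PDEmicrohom3a1} together with the separate no-flux conditions \eqref{eq:Fluxhom2a1}--\eqref{eq:Fluxhom3a1} are precisely the two independent Neumann cell problems \eqref{eq:cell2_a0}, I set
\begin{equation*}
\hat u^B(x,t,y)=-\widehat\chi_0^B(y)\cdot\nabla u^B(x,t),\qquad \hat u^D(x,t,y)=-\widehat\chi_0^D(y)\cdot\nabla u^D(x,t),
\end{equation*}
as in \eqref{eq:factor2_a0}. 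Well-posedness for all three cell problems is classical, so the factorizations are legitimate.

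Then I substitute. Plugging the expression for $\hat v+\hat u^B$ into the macroscopic equation \eqref{eq:PDEhom1a1} and dividing by $|\Perout|$ yields the first line of \eqref{eq:a15a1}, with $\Ahomuno$ as in \eqref{eq:matrix2}. Substituting the expression for $\hat u^D$ into \eqref{eq:PDEhom3a1} gives $\capuno|\Permemb|\partial_t[u]+\capdue|\Permemb|[u]+|\Perout|\Div(\Ahomdue_D\nabla u^D)=0$, which after division by $|\Perout|$ is the third line of \eqref{eq:a15a1}, with $\Ahomdue_D$ as in \eqref{eq:matrix3_a0}.

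Finally, to derive the second line of \eqref{eq:a15a1} I would treat equation \eqref{eq:PDEhom2a1} by splitting the integrand $(\dfint+\dfout)(\nabla u^B+\nabla_y\hat u^B)$ into its $\dfint$ and $\dfout$ parts: the $\dfint$-part combined with $\dfint(\nabla v+\nabla_y\hat v)$ reassembles $\dfint(\nabla(v+u^B)+\nabla_y(\hat v+\hat u^B))$ and, after the $\zeta$-factorization, contributes $|\Perout|\Ahomuno\nabla(v+u^B)$; the $\dfout$-part, after the $\widehat\chi_0^B$-factorization, contributes $|\Perout|\Ahomdue_B\nabla u^B$ with $\Ahomdue_B$ as in \eqref{eq:matrix2_a0}. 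The remaining $\capuno|\Permemb|\partial_t[u]+\capdue|\Permemb|[u]$ term in \eqref{eq:PDEhom2a1} is then eliminated by subtracting the identity provided by \eqref{eq:PDEhom3a1} (after factorization, it equals $-|\Perout|\Div(\Ahomdue_D\nabla u^D)$); this substitution is the only nontrivial step and is the key observation, converting the parabolic-looking second macroscopic equation into an elliptic one in $(u^B,u^D)$ and producing the mixed term $\Div(\Ahomdue_B\nabla u^B+\Ahomdue_D\nabla u^D)$ that appears in \eqref{eq:a15a1}. The initial/boundary conditions and the gating system are inherited unchanged from Theorem \ref{t:hom1a1}, and the symmetry and positive-definiteness of $\Ahomuno$, $\Ahomdue_B$, $\Ahomdue_D$ follow from the standard quadratic-form rewriting already displayed in \eqref{eq:matrix2}, \eqref{eq:matrix2_a0}, \eqref{eq:matrix3_a0}.
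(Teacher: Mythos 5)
Your proposal is correct and follows essentially the same route as the paper: the same factorizations \eqref{eq:factor1a1}--\eqref{eq:factor2a1} via the cell problems \eqref{eq:cell1} and \eqref{eq:cell2_a0}, followed by substitution into \eqref{eq:PDEhom1a1}--\eqref{eq:PDEhom3a1}. The only difference is that you spell out the algebra the paper compresses into ``inserting \dots we get'' --- in particular the splitting of $(\dfint+\dfout)(\nabla u^B+\nabla_y\hat u^B)$ and the use of \eqref{eq:PDEhom3a1} to replace $\capuno|\Permemb|\partial_t[u]+\capdue|\Permemb|[u]$ by $-|\Perout|\Div\left(\Ahomdue_D\nabla u^D\right)$ in \eqref{eq:PDEhom2a1} --- which is exactly the right computation.
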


\begin{proof}
As in Subsection \ref{ss:scaling1}, thanks to \eqref{eq:PDEmicrohom1a1} and \eqref{eq:Fluxhom1a1}, we can factorize
\begin{equation}\label{eq:factor1a1}
(\hat v+\hat u^B)(x,y,t)=-\zeta(y)\cdot\nabla(v+u^B)(x,t)\,,
\end{equation}
where the cell functions $\zeta=(\zeta^1,\dots,\zeta^N)$, with $\zeta^j\in H^1_{\#}(\Perout)$ and $\mathcal{M}_{\Perout}(\zeta^j)=0$,
are the solutions of the cell problem \eqref{eq:cell1}.
Moreover, taking into account \eqref{eq:PDEmicrohom2a1}, \eqref{eq:Fluxhom2a1} and \eqref{eq:PDEmicrohom3a1}, \eqref{eq:Fluxhom3a1}, we can factorize
\begin{equation}\label{eq:factor2a1}
\hat u^B(x,t,y)=-\widehat\chi_0^B(y)\cdot \nabla u^B(x,t),
\qquad \hat u^D(x,t,y)=-\widehat\chi_0^D(y)\cdot \nabla u^D(x,t),
\end{equation}
where, for $j=1,\dots,N$, $\widehat\chi_0^{B,j}\in H^1_\#(\Perout)$, $\widehat\chi_0^{D,j}\in H^1_\#(\Perint)$, $\mathcal{M}_{\Perout}(\widehat\chi_0^{B,j})=0
=\mathcal{M}_{\Perint}(\widehat\chi_0^{D,j})$ are the solutions of \eqref{eq:cell2_a0}.
Inserting \eqref{eq:factor1a1} and \eqref{eq:factor2a1} in \eqref{eq:PDEhom1a1}--\eqref{eq:PDEhom3a1}, we get the single-scale
homogenized degenerate parabolic system \eqref{eq:a15a1},
where the matrices $\Ahomuno,\,\Ahomdue_B,\,\Ahomdue_D$ are defined in \eqref{eq:matrix2}, \eqref{eq:matrix2_a0} and \eqref{eq:matrix3_a0},
respectively.
\end{proof}

\begin{remark}\label{r:r8}
In the connected/disconnected geometry, the system \eqref{eq:PDEhom1a1}--\eqref{eq:InitDatahom2a1}, with $u^D$ (which
now is only an $L^2(\Om_T)$-function) replaced by $u^B$
in \eqref{eq:PDEhom3a1}, \eqref{eq:PDEmicrohom3a1} and \eqref{eq:Fluxhom3a1}, and $v,u^B$ having null trace on the boundary
$\partial\Om\times(0,T)$, is still in force.
Then, in \eqref{eq:factor2a1}, the factorization of $\hat u^D$ is replaced by $\hat u^D=-\widehat\chi^{D}_0\cdot\nabla u^B$,
with $\widehat\chi^{D}_0$ as above.
However, as in Subsection \ref{ss:scaling2}, we obtain that $\widehat\chi_0^{D,j}(y)=y^j$, up to an additive constant.
This implies $\nabla u^B+\nabla_y\hat u^D=0$ and, hence, \eqref{eq:PDEhom3a1} is replaced by \eqref{eq:PDEhom1a1_bis}
and equations \eqref{eq:PDEmicrohom3a1} and \eqref{eq:Fluxhom3a1}, actually, disappear. Moreover, equation \eqref{eq:PDEhom2a1} simplifies
in equation \eqref{eq:PDEhom1a2_bis} and, finally, the matrix $\Ahomdue_D=0$.
Therefore, the single-scale degenerate parabolic system \eqref{eq:a15a1} becomes
\begin{equation}\label{eq:a15bis}
\begin{aligned}
\partial_t v-\Div\left(\Ahomuno\nabla (v+u^B)\right)+I_{ion}(v,w)=f_1\,,\qquad & \hbox{in $\Om_T$;}
\\
-\Div\left(\Ahomuno\nabla (u^B+v)\right)-\Div\left(\Ahomdue_B\nabla u^B\right)
=f_1-f_2\,,\qquad & \hbox{in $\Om_T$;}
\\
\capuno\partial_t[u]+\capdue[u]=0\,,\qquad & \hbox{in $\Om_T$.}
\end{aligned}
\end{equation}
\end{remark}

\begin{remark}\label{r:r11}
Notice that the limit problem \eqref{eq:a15a1}, in the connected/connected case, and the limit problem \eqref{eq:a15bis},
in the connected/disconnected case, both lead to a kind of tridomain model. Indeed,
similarly as in Remark \ref{r:r9}, let us denote by $u^B_1,u^B_2$ and $u^D$, respectively,
the limits of the functions $\ubuno,\ubdue$ and $\ud$,
appearing in the system \eqref{eq:PDEin}--\eqref{eq:InitData3}. Recalling that $\ve=\ubuno-\ubdue$,
$\ue\mid_{\Omout_{T}}=\ubdue$ and $\ue\mid_{\Omint_{T}}=\ud$, we can replace $v=u^B_1-u^B_2$ and
$u^B=u^B_2$ in \eqref{eq:a15a1}, thus obtaining
\begin{equation*}
\begin{aligned}
\partial_t (u^B_1-u^B_2)-\Div\left(\Ahomuno\nabla (u^B_1)\right)+I_{ion}(u^B_1-u^B_2,w)=f_1\,,\qquad & \hbox{in $\Om_T$;}
\\
-\Div\left(\Ahomuno\nabla u^B_1\right)-\Div\left(\Ahomdue_B\nabla u^B_2+\Ahomdue_D\nabla u^D\right)
=f_1-f_2\,,\qquad & \hbox{in $\Om_T$;}
\\
\frac{\capuno|\Permemb|}{|\Perout|}\partial_t(u^B_2-u^D)+\frac{\capdue|\Permemb|}{|\Perout|}(u^B_2-u^D)+\Div\left(\Ahomdue_D\nabla u^D\right)
=0\,,\qquad & \hbox{in $\Om_T$.}
\end{aligned}
\end{equation*}
Analogously, \eqref{eq:a15bis} becomes
\begin{equation*}
\begin{aligned}
\partial_t (u^B_1-u^B_2)-\Div\left(\Ahomuno\nabla (u^B_1)\right)+I_{ion}(u^B_1-u^B_2,w)=f_1\,,\qquad & \hbox{in $\Om_T$;}
\\
-\Div\left(\Ahomuno\nabla u^B_1\right)-\Div\left(\Ahomdue_B\nabla u^B_2\right)
=f_1-f_2\,,\qquad & \hbox{in $\Om_T$;}
\\
\frac{\capuno|\Permemb|}{|\Perout|}\partial_t(u^B_2-u^D)+\frac{\capdue|\Permemb|}{|\Perout|}(u^B_2-u^D)
=0\,,\qquad & \hbox{in $\Om_T$.}
\end{aligned}
\end{equation*}
\end{remark}

%\bibliographystyle{abbrv}
%\bibliography{dacla1,bioimp,damiro1,daparo5bis,homog,paraboli,miei}

\end{document}